\theoremstyle{definition}
\newtheorem{theorem}{Theorem}[section]
\newtheorem{remark}[theorem]{Remark}
\newtheorem{proposition}[theorem]{Proposition}
\newtheorem{definition}[theorem]{Definition}
\newtheorem{assumption}[theorem]{Assumption}
\providecommand{\keywords}[1]{\textbf{\textit{Keywords:}} #1}
\newcommand{\mq}{\mathcal{Q}}
\newcommand{\mqc}{\overline{\mathcal{Q}}}
\newcommand{\mm}{{\mathcal{M}}}
\newcommand{\mt}{\mathbb{T}}
\mathchardef\mhyphen="2D
\newcommand{\mf}{\mathcal{F}}
\newcommand{\intt}{\int_{\mt^3}}
\newcommand{\e}{\epsilon}
\begin{document}
\sloppy
\title{$\Gamma$-convergence of a mean-field model of a chiral doped nematic liquid crystal to the Oseen-Frank description of cholesterics}
\author{Jamie M. Taylor\footnote{Address for correspondance: Basque Center for Applied Mathematics (BCAM), Alameda Mazarredo 14, 48009, Bilbao, Vizcaya, Spain. Email: {\tt jtaylor@bcamath.org}}}
\date{}
\maketitle
\begin{abstract}
Systems of elongated molecules, doped with small amounts of molecules lacking mirror symmetry can form macroscopically twisted cholesteric liquid crystal phases. The aim of this work is to rigorously derive the Oseen-Frank model of cholesterics from a more fundamental model concerned with pairwise molecular interactions. A non-local mean-field model of the two-species nematic host/chiral dopant mixture is proposed, and it is shown that Oseen-Frank's elastic free energy for cholesteric liquid crystals can be obtained in a simultaneously large-domain and dilute-dopant asymptotic regime. By techniques of $\Gamma$-convergence, it is shown that in the asymptotic limit dopant-dopant interactions are negligable, the Frank constants and nematic host order parameter are unperturbed by the presence of dopant, but the mirror asymmetry of the dopant-host interaction leads to a macroscopically twisted ground state. The constant of proportionality between the helical wavenumber and dopant concentration, the {\it helical twisting power} (HTP), can be explicitly found through such an analysis, with a non-linear temperature dependence. Depending on the relative strengths of the host-host and host-dopant interactions, it is shown that HTP may increase or decrease with temperature. 

\end{abstract}
\keywords{Oseen-Frank; Cholesteric Liquid Crystals; Helical Twisting Power; Gamma Convergence}
\section{Introduction}
\subsection{Motivation}

It has long been known that systems of asymmetric molecules can form mesophases outside of the classical solid-liquid-gas trichotomy \cite{de1995physics}. The earliest classification of such phases described the nematic, smectic and cholesteric phases \cite{friedel1922etats}. In the nematic phase molecules admit (local) orientational ordering but no positional ordering. In the smectic phase molecules admit orientational and periodic one-dimensional positional ordering, but are translationally disordered within two orthogonal dimensions. Thirdly there is the cholesteric phase, which resembles a nematic on small length scales but forms helical structures over larger length scales. The macroscopic chirality of these helical structures is associated with a corresponding microscopic chirality of the constituent molecules. Early attempts to study cholesteric liquid crystals considered purer systems of chiral molecules, though in more recent years however it has become commonplace to consider multi-species mixtures, where an achiral host system that would naturally form a nematic phase is doped by a small quantity (often as little as 1-2\%) of a chiral dopant. These are typically far more stable and have attracted a wealth of interest. From a practical perspective, cholesterics and complex materials with cholesteric character have found applications due to their tunable optical properties, allowing them to be exploited for many purposes such as displays \cite{kitzerow2009blue}, e-paper \cite{yang1994control}, mechanically-tunable lasing \cite{finkelmann2001tunable}, and smart windows \cite{bao2009smart}. The purpose of this work is to establish a greater understanding of the emergence of cholesteric structures in doped nematics with dilute chiral dopant, within the context of asymptotic behaviour of a mean-field model based on long-range, attractive molecular interactions. 

\subsection{Density functional theory and the mean-field}
For a broad introduction to classical density functional theory the reader is directed to \cite{poniewierski1992density}.

In the simplest case of pairwise interacting particles, we tend to idealise interactions via two components. The first is a short-range repulsive interaction, steric in nature and representative of the non-interpenetrability of ``hard" particles. The second is a long-range, attractive interaction typically representative of dispersion interactions. If we have a system of $N$ particles described by their generalised coordinates $(q_i)_{i=1}^N$, and a pairwise interaction energy $U$, then the mean-field approach is to replace the total interaction energy felt by a single particle with coordinate $q_i$ by an effective mean field to which all particles simultaneously contribute to and feel, by taking the approximation 
\begin{equation}\label{eqMeanField}
\sum\limits_{i\neq j,i=1..N}U(q_i,q_j)\approx \int U(q_i,q)\rho(q)\,dq.
\end{equation}
Here $\rho$ is the one-particle distribution function, so that $\rho(q)$ denotes the number of particles with generalised coordinate $q$. By considering the one-particle distribution function $\rho$ as a parameter to be optimised over by a trial free energy, a standard density functional theory approach is to compete the total interaction energy (arising as a weighted average of \eqref{eqMeanField}) against an entropic energy contribution in the density functional to be minimised,
\begin{equation}\label{eqDFT}
\int k_BT \rho(q)\ln \rho(q)\,dq +\frac{1}{2}\int\int\rho(p)\rho(q)U(p,q)\,dp\,dq. 
\end{equation}

One may impose various ansatzes on such a free energy to provide descriptions of particular phases of interest, notably by prescribing symmetries. For example both the classical Maier-Saupe \cite{maier1959einfache} and Onsager \cite{onsager1949effects} models take an ansatz of spatial inhomogeneity to derive models explaining the isotropic-nematic phase transition. Though mathematically they are superficially near-identical in both the expression for the energy and conclusion, the two models differ in spirit. Onsager is based on short-range steric effects of hard particles, and driven entirely by changes in concentration, while Maier-Saupe is driven by long-range attractive forces, with temperature playing a similar role to the inverse of concentration in Onsager. 

Within this work we will be taking a Maier-Saupe type approach of long-range attractive interactions, driven by temperature. However the ansatz of spatial homogeneity will be replaced with periodicity on an asymptotically large three-dimensional domain. Furthermore we consider a two-species system, though the model is fundamentally an example of \eqref{eqDFT} where the species index is part of the generalised coordinate.

\subsection{Continuum theories of nematics}

Nematic liquid crystals are systems of rod like molecules where the centre of mass of particles is disordered, but their long axis, often described by a unit vector $p\in\mathbb{S}^2$ respecting the head-to-tail symmetry $p\sim -p$, admits local ordering. The Landau-de Gennes model of nematics describes a domain $\Omega\subset\mathbb{R}^3$ of nematic liquid crystal by its Q-tensor, a traceless symmetric matrix that is the normalised second order moment of the local orientation distribution function \cite{de1995physics,mottram2014introduction}. If the orientation of the particles are locally described at a point $x\in\Omega$ by a probability distribution on the sphere, $f(x,\cdot)\in\mathcal{P}(\mathbb{S}^2)$, we define  
\begin{equation}
Q(x)=\int_{\mathbb{S}^2}\left(p\otimes p-\frac{1}{3}I\right)f(x,p)\,dp.
\end{equation}  
Taking $Q$ to be our order parameter and representative of local orientational ordering, we consider a free energy minimisation, with the free energy generally of the form 
\begin{equation}
\int_\Omega W(\nabla Q(x),Q(x))+\psi_B(Q(x))\,dx,
\end{equation}
where $W,\psi_B$ are frame-indifferent functions, $\psi_B$ is the ``bulk" contribution to the energy, and $W$ the ``elastic" contribution, satisfying $W(0,Q)=0$ for all $Q$. For simplicity, $W$ is typically taken to be quadratic in $\nabla Q$, where frame invariance leads to the representation that 
\begin{equation}
W(\nabla Q,Q)=\frac{L_1}{2}Q_{ij,k}Q_{ij,k}+\frac{L_2}{2}Q_{ij,j}Q_{ik,k}+\frac{L_3}{2}Q_{ik,j}Q_{ij,k}+\frac{L_4}{4}Q_{lk}Q_{ij,l}Q_{ij,k},
\end{equation}
where Einstein notation is employed, and indices after commas denote derivatives in the corresponding coordinate direction. Such models may be justified by a phenomenological Landau expansion, or inferred from mean field models as gradient expansions \cite{han2015microscopic}. To the author's knowledge, however, the passage from mean-field models to Landau-de Gennes has yet to be understood by means of a rigorous proof, instead typically employing truncated series expansions.

Much contemporary work has been devoted to understanding the behaviour of the Landau-de Gennes Q-tensor model in the large domain limit, particularly in the ``one-constant" case where $L_2=L_3=L_4=0$ \cite{gartland2015scalings,majumdar2010landau,nguyen2013refined}. In such a regime, the bulk energy becomes highly penalised, requiring that in the limit $Q(x)\in\text{arg min }\psi_B$ pointwise, which is a set of matrices of the form $s_0\left(n\otimes n-\frac{1}{3}I\right)$, where $s_0>0$ is a constant depending on various material parameters, and any $n\in\mathbb{S}^2$. As $n$ is the only degree of freedom, in the case when $Q=s_0\left(n\otimes n-\frac{1}{3}I\right)$ for some $n\in W^{1,2}(\Omega,\mathbb{S}^2)$, the energy reduces to the Oseen-Frank energy \cite{frank1958liquid}, 
\begin{equation}\label{eqOF}
\int_\Omega \frac{K_{11}}{2}(\nabla \cdot n)^2+\frac{K_{22}}{2}(n\cdot \nabla \times n)^2+\frac{K_{33}}{2}|n\times\nabla\times n|^2\,dx. 
\end{equation}
It should be noted that while $Q$ respects $n,-n$ equivalence, a unit vector field does not, and this has the consequence that such an $n$ does not always exist in $W^{1,2}(\Omega,\mathbb{S}^2)$. \cite{ball2011orientability,bedford2016function}.
The constants $K_{11},K_{22},K_{33}$, relating to the penalisation of splay, twist and bend deformations, respectively, are known as the Frank constants. The Frank constants are related to the constants $L_i$ in the Landau-de Gennes free energy. There is also a so-called ``saddle-splay" term depending only on Dirichlet boundary conditions which we will neglect for this work. The Oseen-Frank model too can be justified phenomenologically by arguing that the energy must be frame invariant, respect $n,-n$ symmetry, be minimised at $\nabla n=0$, and be well approximated by a quadratic in $\nabla n$ in the small deformation regime. 

While much work has been done on the rigorous asymptotics of obtaining Oseen-Frank from Landau-de Gennes, there has been recent interest in obtaining Oseen-Frank directly from mean-field models as a large domain limit. This has included asymptotic descriptions of solutions to the Euler-Lagrange equation and minimisers \cite{liu2017oseen}, convergence of gradient flow-type energies \cite{liu2017small} and $\Gamma$-convergence \cite{taylor2018oseen}. We will continue in this approach with this work, but for the case of cholesteric liquid crystals formed of a two-species system.

\subsection{Cholesteric liquid crystals}

Cholesteric liquid crystals differ from nematics in that their ground states are twisted helical structures. The Oseen-Frank description of cholesterics amounts to the introduction of a term linear in $\nabla n$, so that the energy becomes 
\begin{equation}\label{eqOFchiral}
\frac{1}{2}\int K_{11}(\nabla \cdot n(x))^2+K_{22}(n(x)\cdot \nabla \times n(x)+q)^2+K_{33}|n(x)\times \nabla \times n(x)|^2\,dx,
\end{equation}
where $q$ is a pseudo-scalar describing the wavenumber of the ground state. As $q$ is both a pseudo-scalar and a material constant, this implies that the system must have some kind of intrinsic chirality, which results from molecular chirality. The extra term linear in $\nabla n$ has a corresponding analogue in the Landau-de Gennes model given by 
\begin{equation}
\frac{L_5}{2}\epsilon_{lik}Q_{lj}Q_{ij,k}.
\end{equation} 
The ground state of \eqref{eqOFchiral} in the absence of frustrating boundary conditions is given by configurations of the form 
\begin{equation}
n(x)=\cos(qx\cdot e_3)e_1+\sin(qx\cdot e_3)e_2
\end{equation} in a right-handed orthonormal basis $e_1,e_2,e_3$, so $e_1\times e_2=e_3$. While $n$ is $\frac{2\pi}{q}$ periodic in the $e_3$ direction, as $n$ and $-n$ represent the same configuration, the periodicity of this structure is $\frac{\pi}{q}$. In the case of an achiral host with chiral dopant, we expect $q$ to be roughly proportional to the concentration of dopant in dilute systems. The constant of proportionality is called the {\it helical twisting power} (HTP). The HTP is generally readily measurable experimentally, however difficult to predict. HTP may vary heavily between the same host across different dopants and vice versa, and HTP may be either increasing or decreasing as a function of temperature (see e.g. \cite{kuball1997taddols}). There have been numerous attempts to estimate helical pitch from molecular properties from a theoretical standpoint in both single-species \cite{belli2014density,harris1997microscopic,van1976molecular} and doped systems \cite{earl2003predictions,kamberaj2004helical,osipov2001helical}. In this work we obtain an explicit expression for the HTP depending on various material constants, host concentration, and scalar order parameters of the host and dopant. These scalar order parameters can themselves be explicitly derived from temperature, host number density and material constants. The novelty of our approach against previous work is twofold. Firstly, the problem is qualitatively a singular perturbation problem, phrased in the language of $\Gamma$-convergence. Secondly, we use few ansatzes on our solutions, with features such as the uniaxiality of solutions and the dopant being (locally) aligned with the host being proven from the analysis. These twofold components are of course related, as the commonly taken ansatzes are those expected to hold in large domains, so the fact they are proven to hold exactly in our analysis is a consequence of the asymptotic regime considered. The homogenised nature of our limiting model also provides simpler conclusions.

\subsection{Outline of the paper and main results}

\Cref{sectionModelling} is devoted to the derivation of a mean-field model for chiral doped nematics in a periodic domain. In the usual mean-field spirit, we describe the energy as a competition between pairwise interactions and entropy, in a two-species mixture of host and dopant. Initially, the model is described in terms of orientation distribution functions of each species, however by taking an ansatz that the pairwise interaction energy is bilinear in the effective molecular orientation descriptor tensors, through symmetry arguments and an exact moment closure procedure the model is reduced to a simpler, macroscopic energy defined in terms of the Q-tensors of the host and dopant species. Significantly, Host-Dopant and Dopant-Dopant interactions display chirality through antisymmetric terms in the interaction energy. We then consider an asymptotic scaling corresponding to a large domain with dilute dopant. This leads to the energy $\mf_\e$ as defined in \Cref{defEnergy}. The energy admits a small parameter $\epsilon>0$ which is both proportional to the concetration of dopant and inversely proportional to the sample size. This particular scaling is relevant as it is expected that the periodicity of the cholesteric ground state should be $\frac{\pi}{h\rho_D}$ where $h$ is the HTP and $\rho_D$ is the number density of dopant. Thus if we expect to see structures of the same length scale as the domain, we require $\frac{1}{\rho_D}$ to be comparable to the sample size. 

Before proceeding with a rigorous $\Gamma$-convergence argument, we first provide a heuristic argument in \Cref{sectionHeuristic}, motivating the more precise analysis in later sections. By formally replacing non-local finite differences with gradients, we obtain a Landau-de Gennes flavoured free energy in \eqref{eqLandauCholesteric} for the host and dopant Q-tensors. By neglecting high order terms and minimising the free energy at successive orders of the expansion parameter $\epsilon$, we formally obtain the Oseen-Frank free energy for cholesterics, though symmetry arguments are required to reduce the expression into the classical form, and describe the Frank constants in terms of integrals of molecular interactions and the equilibrium scalar order parameters. As the expansion argument is purely heuristic, in \Cref{sectionGammaConvergence} we provide a precise argument in the language of $\Gamma$-convergence for periodic boundary conditions. It is shown that the limiting free energy formally obtained can be described as the $L^2$-Gamma limit of $\mf_\e$ as $\e\to 0$, which gives us the main theorem of this work, \Cref{thmPeriodicGamma}.

Compactness and lower semicontinuity results from \cite{taylor2018oseen} are of use in the proof strategy. The key ingredients are to prove that the Dopant-Dopant interaction terms provide an asymptotically vanishing contribution to the energy, which is a consequence of the dilute regime we consider. The main qualitative difference from the arguments in \cite{taylor2018oseen} are due to the presence of two energy scales within the bulk-type energy, requiring more care to be taken in providing estimates and recovery sequences, and understanding the interactions between the two macroscopic order parameters. 

While the $\Gamma$-convergence argument provides the limiting energy, the representation of the energy is not in the classical form. In \Cref{sectionCoefficients} we employ symmetry arguments to reduce to a more classical representation. The Frank constants for one-component systems were given in terms of molecular interactions in \cite{taylor2018oseen}, which are unaffected in this analysis. It thus remains to obtain an expression for the HTP that results from this analysis. Our analysis provides the following equation, that the HTP satisfies
\begin{equation}
h=\frac{\tilde{\beta}}{\rho_H\tilde{K}_{22}}\frac{s_c}{s_0}.
\end{equation} 
Here $\tilde{K}_{22}$ and $\tilde{\beta}$ are material constants, and can be thought of as the temperature and concentration independent components of the splay Frank constant and the coefficient of $n\cdot \nabla\times n$, given explicitly in terms of integrals of Host-Host and Host-Dopant interactions, respectively. The scalars $s_0$ and $s_c$ are the scalar order parameters of the host and dopant, respectively, and $\rho_H$ is the number density of the host. While they are nonlinearly dependent on many model parameters, explicit expressions for $s_c,s_0$ are available, and the temperature dependence of the HTP is encoded within their ratio. A dimensionless contribution to the HTP, $\tilde{h}=\frac{s_c}{s_0}$, is the only part that is explicitly temperature dependent. It is determined entirely by two dimensionless parameters $\alpha$, representing the relative strength of Host-Host and Host-Dopant bulk interactions, and a rescaled temperature $\tau$. This relationship is numerically tractable, and a plot of $\tilde{h}$ against $\alpha,\tau$ is presented in \Cref{figHTP}. Significantly, for $\alpha>1$, that is when Host-Host interactions are stronger than Host-Dopant, we see HTP increasing as a function of temperature. When $\alpha<1$, HTP becomes a decreasing function of temperature. At the critical value $\alpha=1$, when Host-Host and Host-Dopant interactions are comparable, it is proven that HTP is independent of temperature. 

\section{The model}\label{sectionModelling}

Let $\Omega$ denote a subset of $\mathbb{R}^3$ containing nematic host (H) and a chiral dopant (D). We presume both species to be elongated molecules, and statistically well described by their long axis alone. We describe the system through two functions, $f_H,f_D:\Omega\to\mathcal{P}(\mathbb{S}^2)$. For $x\in\Omega$, $f_X(x,\cdot)$ is interpreted as the probability distribution describing the distribution of long axis orientations of species $X=H,D$ near $x$. The molecules need not necessarily admit perfect cylindrical symmetry, but are assumed to statistically be well described by such a distribution on their long axis. This difference will be essential in the description of the chiral contributions to the energy, and will be expanded upon in \Cref{remarkChirality}.
We assume that the number density of nematic $\rho_H>0$ and dopant $\rho_D>0$ to be constant in space. Denote by $-\tilde{\mathcal{K}}_{XY}(z,p,q)$ the interaction energy between two particles of species $X,Y$ with orientations $p,q$ (respectively), and centres of mass separated by a vector $z\in\mathbb{R}^3$. We then presume the free energy, up to additive constants irrelevant to our analysis, to be of the form 
\begin{equation}\begin{split}\label{eqEnergyMicroscopic}
&k_BT\int_{\Omega\times\mathbb{S}^2}\rho_H f_H(x,p)\ln f_H(x,p)+\rho_D f_D(x,p)\ln f_D(x,p)\,d(x,p)\\
&-\int_{\Omega\times\mathbb{S}^2}\int_{\Omega\times\mathbb{S}^2}\frac{\rho_H^2}{2}\tilde{\mathcal{K}}_{HH}(x-y,p,q)f_H(x,p)f_H(y,q)\,d(y,q)\,d(x,p)\\
&-\int_{\Omega\times\mathbb{S}^2}\int_{\Omega\times\mathbb{S}^2}\rho_H\rho_D\tilde{\mathcal{K}}_{HD}(x-y,p,q)f_H(x,p)f_D(y,q)\,d(y,q)\,d(x,p)\\
&-\int_{\Omega\times\mathbb{S}^2}\int_{\Omega\times\mathbb{S}^2}\frac{\rho_D^2}{2}\tilde{\mathcal{K}}_{DD}(x-y,p,q)f_D(x,p)f_D(y,q)\,d(y,q)\,d(x,p)
\end{split}
\end{equation}
Each line corresponds, respectively, to entropy, Host-Host interactions, Host-Dopant interactions, and Dopant-Dopant interactions.

\subsection{Pairwise interactions}
For each choice of $X,Y$, $\tilde{\mathcal{K}}_{XY}$ must be frame indifferent, so that for all rotations $R\in\text{SO}(3)$
\begin{equation}
\tilde{\mathcal{K}}_{XY}(Rz,Rp,Rq)=\tilde{\mathcal{K}}_{XY}(z,p,q).
\end{equation}
Furthermore, the head-to-tail symmetry requirement gives
\begin{equation}
\tilde{\mathcal{K}}_{XY}(z,p,q)=\tilde{\mathcal{K}}_{XY}(z,\pm p,\pm q).
\end{equation}
Inversion symmetry (or lack thereof) can be characterised by the difference 
\begin{equation}
\tilde{\mathcal{K}}^c_{XY}(z,p,q)=\frac{1}{2}\tilde{\mathcal{K}}_{XY}(z,p,q)-\frac{1}{2}\tilde{\mathcal{K}}_{XY}(-z,-p,-q)=\frac{1}{2}\tilde{\mathcal{K}}_{XY}(z,p,q)-\frac{1}{2}\tilde{\mathcal{K}}_{XY}(-z,p,q).
\end{equation}
$\tilde{\mathcal{K}}^c_{XY}\neq 0$ implies a lack of inversion symmetry, and thus at least one of the molecules must be chiral. We also consider the symmetric part of the interaction, 
\begin{equation}
\tilde{\mathcal{K}}^a_{XY}(z,p,q)=\frac{1}{2}\tilde{\mathcal{K}}_{XY}(z,p,q)+\frac{1}{2}\tilde{\mathcal{K}}_{XY}(-z,p,q),
\end{equation}
so that $\tilde{\mathcal{K}}_{XY}=\tilde{\mathcal{K}}_{XY}^a+\tilde{\mathcal{K}}_{XY}^c$. If $X,Y$ are both representative of achiral molecules, we expect $\tilde{\mathcal{K}}_{XY}=\tilde{\mathcal{K}}_{XY}^a$.

We define the microscopic order parameter for $p\in\mathbb{S}^2$ as the traceless symmetric matrix $\sigma(p)=p\otimes p-\frac{1}{3}I$. For simplicity, evoking to the London dispersion forces \cite{london1930theorie}, we presume that $\tilde{\mathcal{K}}^a_{XY}(z,p,q)$ is bilinear in $\sigma(p),\sigma(q)$. That is, we assume that for every $z\in\mathbb{R}^3$, we have a linear operator $\tilde{K}_{XY}(z):\text{Sym}_0(3)\to\text{Sym}_0(3)$ so that
\begin{equation}
\tilde{\mathcal{K}}_{XY}(z,p,q)=\tilde{K}_{XY}(z)\sigma(p)\cdot \sigma(q). 
\end{equation}
Due to frame indifference, we can express $\tilde{K}_{XY}$ in full generality as 
\begin{equation}\begin{split}
\tilde{K}_X(z)A=&k_1(z)A+k_2(z)\left(Az\otimes z+z\otimes Az\right)+k_3(z)(Az\cdot z)z\otimes z- \frac{Az\cdot z}{3}\left(2k_2(z)+k_3(z)|z|^2\right)I 
\end{split}
\end{equation}
for isotropic scalar functions $k_i(z)$. The term multiplying the identity matrix is to ensure $\text{Tr}\tilde{K}_X(z)A=0$, but will not effect the value of the energy as $A\cdot I=0$ for all traceless matrices $A$. For much of the analysis the frame invariance of $\tilde{\mathcal{K}}_X$ will not be pertinent, but instead we will rely more strongly on upper and lower bounds of the bilinear form. 

As the host is achiral, we assume $\tilde{\mathcal{K}}_{HH}^c(z,p,q)=0$. However the Host-Dopant and Dopant-Dopant interactions must contain some chiral contribution, as the dopant is chiral. For simplicity, we take $\tilde{\mathcal{K}}_{XY}^c(z,p,q)$ to depend linearly on the microscopic order parameters $\sigma(p),\sigma(q)$ also, so that 
\begin{equation}
\begin{split}
\tilde{\mathcal{K}}_{HD}^c(z,p,q)=&\tilde{K}_{cH}(z)\sigma(p)\cdot \sigma(q),\\
\tilde{\mathcal{K}}_{DD}^c(z,p,q)=&\tilde{K}_{cD}(z)\sigma(p)\cdot \sigma(q),
\end{split}
\end{equation}
for some tensors $\tilde{K}_{cH}(z),\tilde{K}_{cD}(z)$, which must have odd symmetry. Viewed as a linear operator on the traceless symmetric matrices, under symmetry considerations we are permitted in the most general case to consider 
\begin{equation}\label{eqChiralInteraction}
\tilde{K}_{cX}(z)A=k_1(z)(AW-WA)-k_2(z)(WAW^2-W^2AW)
\end{equation}
for isotropic functions $k_i(z)$, where $W$ is the skew-symmetric tensor defined by $Wx=z\times x $ for all $x \in \mathbb{R}^3$. Equivalently, $W_{ij}=-\epsilon_{ij\alpha}z_\alpha$ with $\epsilon_{ijk}$ the Levi-Civita tensor. This includes the form considered in \cite{van1976molecular} as a particular case. For a justification of \eqref{eqChiralInteraction} see \Cref{appSymmetry}.

\begin{remark}\label{remarkChirality}
We note that for molecules to be chiral, they necessarily cannot have cylindrical symmetry, raising questions of the validity of such an expression for a chiral interaction described only by the uniaxial microscopic order parameters $\sigma(p)$. This is however justified in \cite{van1976molecular} for one-component systems and exploited in \cite{osipov2001helical} for two-component systems, by considering low-symmetry chiral molecules with a well-defined long axis. It is assumed that all orientations with the long axis pointing in the same direction are equally probable. We could describe this by saying that if the long-axis of the molecule in a reference frame is $e_1$, and $\rho\in\mathcal{P}(\text{SO}(3))$ is the (local) distribution of orientations of the molecule with respect to this reference frame, then $\rho(R)=\tilde\rho(Re_1)$ for some $\tilde\rho\in\mathcal{P}(\mathbb{S}^2)$. Averaging electrostatic interactions between systems of such molecules can obtain components in an interaction energy such as \eqref{eqChiralMeer}, where chiral contributions to the energy remain, despite being described by an order parameter $\sigma(p)$ that cannot encode chirality.
\end{remark}

\subsection{Entropy and order parameters}\label{subsecEntropy}
In this subsection, including \Cref{propSingPotential}, we freely quote results from \cite{ball2010nematic,mottram2014introduction,taylor2016maximum}.

Let $f\in\mathcal{P}(\mathbb{S}^2)$. We define the Q-tensor of $f$ to be 
\begin{equation}\label{eqQtensor}
Q=\int_{\mathbb{S}^2}f(p)\sigma(p)\,dp=\int_{\mathbb{S}^2}f(p)\left(p\otimes p-\frac{1}{3}I\right)\,dp.
\end{equation}
$Q$ is thus a traceless, symmetric, $3\times 3$ matrix. Furthermore, the constraint that $f$ be an $L^1$ probability distribution forces the constraint that the smallest eigenvalue of $Q$, $\lambda_{\min}(Q)$, be strictly greater than $-\frac{1}{3}$. The tracelessness conditions means a given Q-tensor can be one of three flavours:
\begin{itemize}
\item $Q=0$, in which case we say $Q$ is {\it isotropic}, and representative of a disordered system. 
\item $Q$ has exactly two distinct eigenvalues, in which case we say $Q$ is {\it uniaxial}, and representative of a system of molecules roughly aligned with axial symmetry. We can decompose $Q=s\left(n\otimes n-\frac{1}{3}\right)$ for a scalar order parameter $s\neq 0$, and Oseen-Frank's director $n\in\mathbb{S}^2$. If $s>0$, molecules are roughly aligned along $\pm n$, while if $s<0$, molecules typically lie in the plane orthogonal to $n$.
\item $Q$ has three distinct eigenvalues, in which case $Q$ is {\it biaxial}, and representative of an ordered system with lower symmetry than a uniaxial system.
\end{itemize}

Typically we expect to find nematic liquid crystals in a uniaxial state, with positive scalar order parameter \cite{fatkullin2005critical,mottram2014introduction,vollmer2017critical}. The order parameter $s$ can be taken as representative of the degree of ordering of the system, with $s=1$ corresponding to a perfectly ordered state. 

The eigenvalue constraint that $\lambda_{\min}(Q)>-\frac{1}{3}$ means that not all traceless symmetric matrices are physically meaningful. In fact, given a traceless symmetric matrix $Q$, there exists some $L^1$ distribution $f$ with finite Shannon entropy so that \eqref{eqQtensor} holds if and only if $\lambda_{\min}(Q)>-\frac{1}{3}$. Thus we define the set of {\it physical} Q-tensors to be 
\begin{equation}
\mathcal{Q}=\left\{Q\in\mathbb{R}^{3\times 3}:Q=Q^T,\, \text{Tr}(Q)=0,\, \lambda_{\min}(Q)>-\frac{1}{3}\right\}.
\end{equation}
This is an open bounded convex set, and its closure $\bar{\mathcal{Q}}$ simply consists of traceless symmetric matrices with $\lambda_{\min}(Q)\geq -\frac{1}{3}$. We can define a macroscopic analogue of entropy, $\psi_s:\mathcal{Q}\to\mathbb{R}$ by 
\begin{equation}\label{eqSingPotential}
\psi_s(Q)=\min\limits_{f \in \mathcal{A}_Q}\int_{\mathbb{S}^2}f(p)\ln f(p)\,dp,
\end{equation}
where the admissible set $\mathcal{A}_Q$ is defined as
\begin{equation}
\mathcal{A}_Q=\left\{f \in \mathcal{P}(\mathbb{S}^2):\int_{\mathbb{S}^2}f(p)\sigma(p)\,dp=Q\right\}.
\end{equation}
Heuristically, $\psi_s$ is a macroscopic analogue of the entropic contribution to the energy, obtained by a maximum entropy assumption. It satisfies the following properties 
\begin{proposition}\label{propSingPotential}
Let $\psi_s:\mq\to\mathbb{R}$ be defined as in \eqref{eqSingPotential}. Then the following hold. 
\begin{enumerate}
\item $\psi_s$ is strongly convex, $C^\infty$, frame indifferent, and blows up to $+\infty$ as $\lambda_{\min}(Q)\to-\frac{1}{3}$.
\item The derivative of $\psi_s$, denoted $\Lambda:\mathcal{Q}\to\text{Sym}_0(3)$, is a frame indifferent bijection.
\item For every $Q\in\mathcal{Q}$ there exists a unique minimiser $f^Q$ of the minimisation problem in \eqref{eqSingPotential} given by 
\begin{equation}
f^Q(p)=\frac{1}{Z_Q}\exp\left(\Lambda(Q)p\cdot p\right),
\end{equation}
where $Z_Q>0$ is a normalisation constant. 
\item $\Lambda^{-1}:\text{Sym}_0(3)\to\mathcal{Q}$ satisfies 
\begin{equation}\label{eqLambdaMinus1}
\Lambda^{-1}(A)=\left(\int_{\mathbb{S}^2}\exp(Ap\cdot p)\,dp\right)^{-1}\int_{\mathbb{S}^2}\sigma(p)\exp(Ap\cdot p)\,dp.
\end{equation}
\item $\psi_s$ can be written in terms of $\Lambda$ as $\psi_s(Q)=\Lambda(Q)\cdot Q -\ln\int_{\mathbb{S}^2}\exp(\Lambda(Q)p\cdot p)\,dp$.
\item Let $k\in\mathbb{R}$, and define $\psi_B$ by $\psi_B(Q)=\psi_s(Q)-\frac{k}{2}|Q|^2$. Then there exists some $k^*$ so that if $k<k^*$, $\psi_B$ is minimised only at $Q=0$, while for $k>k^*$, $\psi_B$ is minimised at all Q-tensors of the form $s(k)\left(n\otimes n-\frac{1}{3}I\right)$ for any $n\in \mathbb{S}^2$, and some $s:(k^*,\infty)\to \mathbb{R}$. 
\item Let $k>k^*$, $e_1,e_2,e_3$ be an orthonormal basis of $\mathbb{R}^3$, and $Q=s(k)\sigma(e_1)$. Then there exists a constant $c_k>0$ so that if $\xi = s\sigma(e_1)+t\left(e_2\otimes e_2-e_3\otimes e_3\right)$, then $\nabla^2\psi_B(Q)\xi\cdot \xi \geq c_k(s^2+t^2)$ for $s,t$ sufficiently small \cite{li2014local}.
\end{enumerate}
\end{proposition}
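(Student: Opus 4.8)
The plan is to recognise the entire proposition as the standard convex-duality theory of the exponential family generated by the quadratic observable $p\mapsto\sigma(p)$ on $\mathbb{S}^2$, as developed in the cited references. The central object is the log-partition function $\mathcal{Z}:\text{Sym}_0(3)\to\mathbb{R}$,
\[
\mathcal{Z}(A)=\ln\int_{\mathbb{S}^2}\exp(Ap\cdot p)\,dp,
\]
and the whole statement follows once I establish that $\mathcal{Z}$ is smooth and strictly convex, that its gradient is a diffeomorphism onto $\mathcal{Q}$, and that $\psi_s$ is its Legendre transform.

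First I would study $\mathcal{Z}$ directly. Smoothness follows from differentiation under the integral sign, legitimate because the integrand is smooth in $A$ and $\mathbb{S}^2$ is compact. A direct computation gives $\nabla\mathcal{Z}(A)=\langle\sigma\rangle_A$ and $\nabla^2\mathcal{Z}(A)=\text{Cov}_A(\sigma)$, the mean and covariance of $\sigma$ under the Gibbs measure $f^A(p)=Z(A)^{-1}\exp(Ap\cdot p)$; since $\sigma(p)$ is not a.e.\ constant, the covariance is positive definite on $\text{Sym}_0(3)$, giving strict convexity. The formula for $\nabla\mathcal{Z}$ is precisely \eqref{eqLambdaMinus1}, so property 4 reduces to identifying $\nabla\mathcal{Z}=\Lambda^{-1}$. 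Equivariance $\mathcal{Z}(RAR^T)=\mathcal{Z}(A)$ under $R\in\text{SO}(3)$, via the change of variables $p\mapsto R^Tp$, yields the frame indifference asserted throughout.

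Next I would identify $\psi_s$ as the Legendre transform of $\mathcal{Z}$. Introducing a Lagrange multiplier $A$ for the moment constraint shows the unique minimiser in $\mathcal{A}_Q$ is the Gibbs density $f^Q$ of property 3 — uniqueness from strict convexity of $f\mapsto\int f\ln f$, and optimality verified by nonnegativity of relative entropy against $f^Q$. Evaluating the objective at $f^Q$ gives $\psi_s(Q)=A\cdot Q-\mathcal{Z}(A)$ with $A$ chosen so that $\nabla\mathcal{Z}(A)=Q$, which is property 5 and exhibits $\psi_s=\mathcal{Z}^*$. The surjectivity of $\nabla\mathcal{Z}$ onto $\mathcal{Q}$ — equivalently solvability of $\nabla\mathcal{Z}(A)=Q$ for every $Q\in\mathcal{Q}$ — is the one genuinely delicate point: it requires showing that as $|A|\to\infty$ the mean order parameter $\nabla\mathcal{Z}(A)$ approaches $\partial\mathcal{Q}$, its least eigenvalue tending to $-\frac13$, so that the image is exactly the open convex set $\mathcal{Q}$. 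Together with the positive-definite Hessian this makes $\nabla\mathcal{Z}:\text{Sym}_0(3)\to\mathcal{Q}$ a diffeomorphism whose inverse is $\Lambda$ (property 2), and the standard calculus of the Legendre transform of a smooth strictly convex function transfers smoothness, strong convexity and $\nabla\psi_s=\Lambda$ to $\psi_s$ (property 1). The boundary blow-up of $\psi_s$ is the ``steepness'' dual to $\mathcal{Z}$ being finite on all of $\text{Sym}_0(3)$.

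Finally, for the Maier--Saupe bulk potential $\psi_B=\psi_s-\frac{k}{2}|Q|^2$ (properties 6--7) I would exploit frame indifference to diagonalise: the Euler--Lagrange equation $\Lambda(Q)=kQ$ forces $Q$ and $\Lambda(Q)$ to commute, and a classical symmetrisation/eigenvalue argument shows every minimiser is uniaxial with nonnegative scalar order. This reduces the problem to the scalar function $g(s)=\psi_B(s\sigma(e_1))$, whose analysis yields a threshold $k^*$ separating the regime where $s=0$ is the only minimiser from the regime where a positive $s(k)$ appears. For property 7 I would compute $\nabla^2\psi_B=\nabla^2\psi_s-kI$ at $Q=s(k)\sigma(e_1)$ and test it against the two-dimensional subspace spanned by $\sigma(e_1)$ and $e_2\otimes e_2-e_3\otimes e_3$; positivity there is exactly the local stability statement of \cite{li2014local}, obtained from the strong convexity of $\psi_s$ combined with first-order optimality at the minimiser. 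I expect the uniaxiality claim in property 6 and this nondegeneracy in property 7 to be the main obstacles, since they use the specific structure of the Maier--Saupe potential rather than the soft convex-duality machinery that delivers properties 1--5.
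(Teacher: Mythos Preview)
The paper does not actually prove this proposition: the opening line of \S\ref{subsecEntropy} states that the results, including \Cref{propSingPotential}, are ``freely quote[d]'' from \cite{ball2010nematic,mottram2014introduction,taylor2016maximum}, and no argument is supplied. Your outline is a correct and standard account of the convex-duality/exponential-family machinery that those references develop---identifying $\psi_s$ as the Legendre transform of the log-partition function $\mathcal{Z}$, reading off items 1--5 from the smoothness and strict convexity of $\mathcal{Z}$, and reducing items 6--7 to the one-dimensional Maier--Saupe analysis plus the stability result of \cite{li2014local}. In short, there is nothing in the paper to compare against beyond the citations, and your sketch matches what one finds there.
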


\subsection{The total energy - macroscopic form}
Recalling the energy \eqref{eqEnergyMicroscopic}, we now simplify it to a macroscopic form via an exact moment closure. We proceed with only a formal argument, though the arguments can be made precise through the same methodology as in \cite{liu2017oseen}. 

First, we define the macroscopic Q-tensors $Q,\xi:\Omega\to\mqc$ corresponding to the orientations of Host and Dopant molecules (respectively), by 
\begin{equation}
\begin{split}
Q(x)=&\int_{\mathbb{S}^2}\sigma(p)f_H(x,p)\,dp,\\
\xi(x)=&\int_{\mathbb{S}^2}\sigma(p)f_D(x,p)\,dp.\\
\end{split}
\end{equation}
As these are integrals over zero-measure sets of $L^1$ functions, to make these definitions precise one may employ a duality argument as in \cite{liu2017oseen}. We now reduce the interaction terms to integrals involving only the moments. The argument follows for all terms, so we provide the calculation for only the Host-Dopant interaction. Using these definitions, we may simplify the pairwise interaction terms as follows
\begin{equation}\begin{split}
&\int_{\Omega\times\mathbb{S}^2}\int_{\Omega\times\mathbb{S}^2} \tilde{\mathcal{K}}_{HD}(x-y,p,q)f_H(x,p)f_D(y,q)\,d(x,p)\,d(y,q)\\
=&\int_{\Omega\times\mathbb{S}^2}\int_{\Omega\times\mathbb{S}^2} \left(\tilde{K}_{HD}(x-y)+\tilde{K}_{cH}(x-y)\right)\cdot \big(\sigma(p)\otimes\sigma(q)\big)f_H(x,p)f_D(y,q)\,d(x,p)\,d(y,q)\\
=&\int_{\Omega}\int_{\Omega} \left(\tilde{K}_{HD}(x-y)+\tilde{K}_{cH}(x-y)\right)\cdot \left(\int_{\mathbb{S}^2}\sigma(p)f_H(x,p)\,dp\otimes\int_{\mathbb{S}^2}\sigma(q)f_D(y,q)\,dq\right)\,dx\,dy\\
=& \int_{\Omega}\int_{\Omega} \left(\tilde{K}_{HD}(x-y)+\tilde{K}_{cH}(x-y)\right)\cdot \big(Q(x)\otimes\xi(y)\big)\,dx\,dy\\
=&\int_{\Omega}\int_{\Omega} \left(\tilde{K}_{HD}(x-y)Q(x)\cdot \xi(y)+\tilde{K}_{cH}(x-y)Q(x)\cdot \xi(y)\right)\,dx\,dy.
\end{split}
\end{equation}
Analogous arguments give 
\begin{equation}\begin{split}
&\int_{\Omega\times\mathbb{S}^2}\int_{\Omega\times\mathbb{S}^2} \tilde{\mathcal{K}}_{HH}(x-y,p,q)f_H(x,p)f_H(y,q)\,d(x,p)\,d(y,q)\\
=&\int_{\Omega}\int_{\Omega} \tilde{K}_{HH}(x-y)Q(x)\cdot Q(y)\,dx\,dy,\\
&\int_{\Omega\times\mathbb{S}^2}\int_{\Omega\times\mathbb{S}^2} \tilde{\mathcal{K}}_{DD}(x-y,p,q)f_D(x,p)f_D(y,q)\,d(x,p)\,d(y,q)\\
=&\int_{\Omega}\int_{\Omega} \tilde{K}_{DD}(x-y)\xi(x)\cdot \xi(y)+\tilde{K}_{cD}(x-y)\xi(x)\cdot \xi(y)\,dx\,dy.
\end{split}
\end{equation}

We now turn to the entropic contribution to the energy, and only consider the Host term, with the Dopant following by the same reasoning. The definition of $\psi_s$ gives the estimate
\begin{equation}
\int_{\Omega\times\mathbb{S}^2}f_H(x,p)\ln f_H(x,p)\,d(x,p)\geq \int_{\Omega}\psi_s(Q(x))\,dx. 
\end{equation}
This inequality is attained if and only if $f_H$ satisfies for almost every $x\in\Omega$,
\begin{equation}\label{eqfH}
f_H(x,p)=f^{Q(x)}(p)
\end{equation}
where $f^Q$ is as given in \Cref{subsecEntropy} for almost every $x\in\Omega$. Similarly, we can estimate 
\begin{equation}
\int_{\Omega\times\mathbb{S}^2}f_D(x,p)\ln f_D(x,p)\,d(x,p)\geq \int_\Omega \psi_s(\xi(x))\,dx,
\end{equation}
which is attained if and only if and $f_D$ and $\xi$ are related as 
\begin{equation}\label{eqfD}
f_D(x,p)=f^{\xi(x)}(p)
\end{equation}
almost everywhere. In particular, these attained lower bounds mean that minimisers $f_H,f_D$ must be of the forms given in \eqref{eqfH},\eqref{eqfD}, respectively, depending on their Q-tensors. Thus for the sake of minimisation it then suffices to replace the entropic term by the simpler, macroscopic analogue. This leads us to the equivalent macroscopic free energy, 
\begin{equation}\begin{split}\label{eqEnergyMacroscopic}
&\int_{\Omega}k_BT\rho_H\psi_s(Q(x))+k_BT\rho_D\psi_s(\xi(x))\,dx-\frac{\rho_D^2}{2}\int_{\Omega}\int_{\Omega} \tilde{K}_{DD}(x-y)\xi(x)\cdot \xi(y)+\tilde{K}_{cD}(x-y)\xi(x)\cdot \xi(y)\,dx\,dy\\
&-\frac{\rho_H^2}{2}\int_{\Omega}\int_{\Omega} \tilde{K}_{HH}(x-y)Q(x)\cdot Q(y)\,dx\,dy-\rho_D\rho_H\int_{\Omega}\int_{\Omega} \left(\tilde{K}_{HD}(x-y)Q(x)\cdot \xi(y)+\tilde{K}_{cH}(x-y)Q(x)\cdot \xi(y)\right)\,dx\,dy.
\end{split}
\end{equation}

\subsection{Scaling, periodic domains and non-dimensionalisation}
Consider the energy \eqref{eqEnergyMacroscopic} in the case when $\Omega=\mathbb{R}^3$, with a configuration $\frac{2\pi}{\epsilon}$-periodic in each of the coordinate dimensions. Let $\mathbb{T}^3$ denote the (flat) torus in 3D with unit sides $2\pi$. The energy per unit cell is then given by 
\begin{equation}\begin{split}
&\int_{\frac{1}{\epsilon}\mathbb{T}^3}k_BT\rho_H\psi_s(Q(x))+k_BT\rho_D\psi_s(\xi(x))\,dx-\frac{\rho_D^2}{2}\int_{\frac{1}{\epsilon}\mathbb{T}^3}\int_{\mathbb{R}^3} \tilde{K}_{DD}(x-y)\xi(x)\cdot \xi(y)+\tilde{K}_{cD}(x-y)\xi(x)\cdot \xi(y)\,dx\,dy\\
&-\frac{\rho_H^2}{2}\int_{\frac{1}{\epsilon}\mathbb{T}^3}\int_{\mathbb{R}^3} \tilde{K}_{HH}(x-y)Q(x)\cdot Q(y)\,dx\,dy-\rho_D\rho_H\int_{\frac{1}{\epsilon}\mathbb{T}^3}\int_{\mathbb{R}^3} \left(\tilde{K}_{HD}(x-y)Q(x)\cdot \xi(y)+\tilde{K}_{cH}(x-y)Q(x)\cdot \xi(y)\right)\,dx\,dy.
\end{split}
\end{equation}
Note in the double integrals, the inner integral is over all of $\mathbb{R}^3$ as molecules interact not only with molecules in their unit cell, but all cells due to the non-locality. We now perform a change of variables, $x=\frac{1}{\epsilon}x'$, $y=\frac{1}{\epsilon}y'$, $Q'(x')=Q(x)$, $\xi'(x')=\xi(x)$. Then $Q',\xi'$ are $2\pi$-periodic in the coordinate directions. The energy can thus be written as 
\begin{equation}\begin{split}
\tilde{\mf}=&\frac{1}{\epsilon^3}\int_{\mathbb{T}^3}k_BT\rho_H\psi_s(Q'(x'))+k_BT\rho_D\psi_s(\xi'(x'))\,dx'\\
&-\frac{\rho_D^2}{2\epsilon^6}\int_{\mathbb{T}^3}\int_{\mathbb{R}^3} \tilde{K}_{DD}\left(\frac{x'-y'}{\epsilon}\right)\xi'(x')\cdot \xi'(y')+\tilde{K}_{cD}\left(\frac{x'-y'}{\epsilon}\right)\xi'(x')\cdot \xi'(y')\,dx'\,dy'\\
&-\frac{\rho_H^2}{2\epsilon^6}\int_{\mathbb{T}^3}\int_{\mathbb{R}^3} \tilde{K}_{HH}\left(\frac{x'-y'}{\epsilon}\right)Q'(x')\cdot Q'(y')\,dx'\,dy'\\
&-\frac{\rho_D\rho_H}{\epsilon^6}\int_{\mathbb{T}^3}\int_{\mathbb{R}^3} \left(\tilde{K}_{HD}\left(\frac{x'-y'}{\epsilon}\right)Q'(x')\cdot \xi'(y')+\tilde{K}_{cH}\left(\frac{x'-y'}{\epsilon}\right)Q'(x')\cdot \xi'(y)\right)\,dx'\,dy'.
\end{split}
\end{equation}
We will consider an asymptotically dilute regime, so that $\rho_D=\rho_0\rho_H\epsilon$ for some $\rho_0>0$, independent of $\epsilon$. Then we consider the rescaled free energy $\frac{\tilde{\epsilon\mf}}{\rho_H k_BT}$, which satisfies 
\begin{equation}
\begin{split}
\frac{\epsilon\tilde{\mf}}{\rho_H k_BT}=&\frac{1}{\epsilon^2}\int_{\mathbb{T}^3}\psi_s(Q'(x'))+\rho_0\epsilon\psi_s(\xi'(x'))\,dx'\\
&-\frac{\rho_0^2\rho_H}{2\epsilon^3k_BT}\int_{\mathbb{T}^3}\int_{\mathbb{R}^3} \tilde{K}_{DD}\left(\frac{x'-y'}{\epsilon}\right)\xi'(x')\cdot \xi'(y')+\tilde{K}_{cD}\left(\frac{x'-y'}{\epsilon}\right)\xi'(x')\cdot \xi'(y')\,dx'\,dy'\\
&-\frac{\rho_H}{2\epsilon^5k_BT}\int_{\mathbb{T}^3}\int_{\mathbb{R}^3} \tilde{K}_{HH}\left(\frac{x'-y'}{\epsilon}\right)Q'(x')\cdot Q'(y')\,dx'\,dy'\\
&-\frac{\rho_0\rho_H}{\epsilon^4k_BT}\int_{\mathbb{T}^3}\int_{\mathbb{R}^3} \left(\tilde{K}_{HD}\left(\frac{x'-y'}{\epsilon}\right)Q'(x')\cdot \xi'(y')+\tilde{K}_{cH}\left(\frac{x'-y'}{\epsilon}\right)Q'(x')\cdot \xi'(y)\right)\,dx'\,dy'.
\end{split}
\end{equation}

We now redefine our operators into a dimensionless analogue, as 
\begin{equation}
\begin{array}{c  c c}
\frac{\rho_H}{k_BT}\tilde{K}_{DD}\mapsto K_{DD}, & \frac{\rho_H}{k_BT}\tilde{K}_D^c\mapsto K_D^c, & \frac{\rho_H}{k_BT}\tilde{K}_{HH}\mapsto K_{HH},\\
\frac{\rho_H}{k_BT}\tilde{K}_{HD}\mapsto K_{HD}, & \frac{\rho_H}{k_BT}\tilde{K}_{cH}\mapsto K_{cH}
\end{array}
\end{equation}

This leads us to the energy which we will aim to minimise, 
\begin{equation}\begin{split}
\mf_\epsilon(Q,\xi)= &\int_{\mt^3} \frac{1}{\epsilon^2}\psi_s(Q)+\frac{1}{\epsilon} \rho_0\psi_s(\xi)-c_\epsilon\,dx\\
&-\frac{1}{2\epsilon^5}\int_{\mt^3}\int_{\mathbb{R}^3}K_{HH}\left(\frac{x-y}{\epsilon}\right)Q(x)\cdot Q(y)+2\rho_0\epsilon K_{DH}\left(\frac{x-y}{\epsilon}\right)Q(x)\cdot\xi(y)+\rho_0^2\epsilon^2K_{DD}\left(\frac{x-y}{\epsilon}\right)\xi(x)\cdot\xi(y)\,dy\,dx\\
&+ \frac{1}{\epsilon^5}\int_{\mt^3}\int_{\mathbb{R}^3}\rho_0\epsilon K_{cH}\left(\frac{x-y}{\epsilon}\right)\xi(x)\cdot Q(y)+\frac{1}{\epsilon^5}\rho_0^2\epsilon^2K_{cD}\left(\frac{x-y}{\epsilon}\right)\xi(x)\cdot \xi(y)\,dy\,dx
\end{split}
\end{equation}

The constant $c_\epsilon$ is to be determined later and does not affect the minima, and will be chosen so that the ``bulk" contribution to the energy has minimum zero.

We now include some technical assumptions on the interaction terms, which will be required for the precise analysis in \Cref{sectionGammaConvergence}. Note consants $C,C_1,C_2$ will be used to represent generic positive constants and may change from line to line, here and throughout the paper

\begin{assumption}\label{assumptionInteraction} The functions $K_{XY}$, $K_X^c$ are $L^1(\mathbb{R}^3)$, frame indifferent functions for all choices of $X,Y$ as $H,D$. Furthermore, they satisfy the following inequalities. There exists positive constants $C_1,C_2$, and a non-negative frame indifferent function $g\in L^1(\mathbb{R}^3)$ with $\int_{\mathbb{R}^3}|x|^2g(x)\,dx<+\infty$ so that
\begin{equation}
\begin{split}
&C_1 g(z)\left(|A|^2+|B|^2-1\right)\\
<&K_{HH}(z)A\cdot A + 2K_{HD}(z)A\cdot B + K_{DD}(z)B\cdot B+ \sqrt{\frac{2}{3}}|K_{cH}(z)||A|+ \sqrt{\frac{2}{3}}|K_{cD}||B|\\
\leq & C_2g(z)\left(|A|^2+|B|^2+1\right).\end{split}
\end{equation}
Furthermore, for $X,Y=H,D$, there exists positive constants $C_2>C_1>0$ so that
\begin{equation}\label{eqEstimate1}
C_1g(z)\text{Id}\leq K_{XY}(z) \leq C_2 g(z)\text{Id},
\end{equation}
where $\text{Id}$ denotes the identity matrix on $\text{Sym}_0(3)$ and the inequality holds as bilinear forms. Finally there exists positive constants $C>0$ so that 
\begin{equation}\label{eqEstimate2}
|K_{cX}(z)|\leq Cg(z)
\end{equation}
for $X=H,D$.

\end{assumption}
Loosely speaking, these estimates say that the symmetric parts of the interaction energies are all comparable and give rise to a positive definite bilinear form. The fact that the same function can estimate above and below is indicative of the fact that at a fixed separation $z$, the interaction energy cannot vary too wildly as the relative orientations of particles vary. We note that the estimates \eqref{eqEstimate1}, \eqref{eqEstimate2} are consistent with London dispersion forces for the achiral contributions \cite{london1930theorie} and the van der Meer expression for chiral contributions \cite{van1976molecular}, if the interaction is ``cut-off" at close distance, a technique employed in the derivation of the Maier-Saupe free energy and evocative of the representation of such interactions as being long-range in character \cite{maier1959einfache}.

Next we introduce some notation as in \cite{taylor2018oseen}.
\begin{definition}
Let $u\in L^1(\mathbb{R}^3)$, $\epsilon>0$. Then define $u^\epsilon\in L^1(\mathbb{T}^3)$ by 
\begin{equation}
u^\epsilon(x)=\frac{1}{\epsilon^3}\sum\limits_{k\in\mathbb{Z}^3} u\left(\frac{x+2\pi k }{\epsilon}\right).
\end{equation}
\end{definition}

This definition can be interpreted in a duality-like fashion, as it is equivalent to the statement that 
\begin{equation}
\frac{1}{\epsilon^3}\int_{\mathbb{R}^3}v(x)u\left(\frac{x}{\epsilon}\right)\,dx=\int_{\mathbb{T}^3}u^\epsilon(x)v(x)\,dx
\end{equation}
for all $v\in L^\infty(\mathbb{R}^3)$ which are $2\pi$-periodic in the coordinate directions, or equivalently $v\in L^\infty(\mathbb{T}^3)$.

 Thus we can write the energy as 
\begin{equation}\begin{split}\label{eqEnergy}
\mf_\epsilon(Q,\xi)= &\int_{\mt^3} \frac{1}{\epsilon^2}\psi_s(Q)+\frac{1}{\epsilon} \rho_0\psi_s(\xi)\,dx\\
&-\frac{1}{2\epsilon^2}\int_{\mt^3}\int_{\mt^3}K_{HH}^\epsilon(x-y)Q(x)\cdot Q(y)+2\rho_0\epsilon K_{DH}^\epsilon\left(x-y\right)Q(x)\cdot\xi(y)+\rho_0^2\epsilon^2K_{DD}^\epsilon\left(x-y\right)\xi(x)\cdot\xi(y)\,dy\,dx\\
&+ \frac{1}{\epsilon^2}\int_{\mt^3}\int_{\mt^3}\rho_0\epsilon K_{cH}^\epsilon\left(x-y\right)\xi(x)\cdot Q(y)+\rho_0^2\epsilon^2K_{cD}^\epsilon\left(x-y\right)\xi(x)\cdot \xi(y)\,dy\,dx
\end{split}
\end{equation} 
The natural function spaces for $Q,\xi$ are $L^\infty(\mathbb{T}^3,\mqc)$, as the constraints $\text{Tr}(Q)=0$ and $\lambda_{\min}(Q)>-\frac{1}{3}$ imply ${||Q||_\infty<\frac{2}{3}}$ and ${||\xi||_\infty<\frac{2}{3}}$

\subsection{Re-writing the energy}\label{subsecRewrite}

We now turn to writing the given energy in a form more convenient for our analysis.

For the achiral terms involving the operators $K_{XY}$, we note the general identity for any symmetric bilinear form and vectors $A(x),A(y),B(x),B(y)$ 
\begin{equation}
\langle A(x)-A(y),B(x)-B(y)\rangle = \langle A(x),B(x)\rangle + \langle A(y),B(y)\rangle -\langle A(x),B(y)\rangle -\langle A(y),B(x)\rangle
\end{equation}
This implies that the achiral integral terms can be written as 
\begin{equation}
\begin{split}
&\int_{\mt^3}\int_{\mt^3}K_{XY}^\epsilon(x-y)A(x)\cdot B(y)\,dx\,dy \\
=& \int_{\mt^3}\int_{\mt^3} K_{XY}^\epsilon(x-y)A(x)\cdot B(x)+\frac{1}{2}K_X^\epsilon(x-y)\big(A(x)-A(y)\big)\cdot \big(B(x)-B(y)\big)\,dx\,dy\\
=&\int_{\mt^3} k_{XY}^0 A\cdot B\,dx+\frac{1}{2}\int_{\mt^3}\int_{\mt^3}K_{XY}^\epsilon(x-y)\big(A(x)-A(y)\big)\cdot \big(B(x)-B(y)\big)\,dy\,dx.
\end{split}
\end{equation}
Here the tensor $k_{XY}^0$ is defined by 
\begin{equation}
k_{XY}^0=\int_{\mathbb{T}^3}K_{XY}^\epsilon(z)\,dz=\int_{\mathbb{R}^3}K_{XY}(z)\,dz.
\end{equation}
By symmetry, as $k_{XY}^0$ defines a linear, frame invariant function of a trace-free matrix, it must hold that $k_{XY}^0$ is a multiple of the identity operator \cite{smith1971isotropic}. In particular, we will abuse notation and frequently write $ k_{XY}^0A\cdot B = k_{XY}^0 (A\cdot B)$, viewing $k_{XY}^0$ as a scalar. 
Turning to the chiral terms $K_{cX}$, we may write
\begin{equation}\begin{split}
&\int_{\mt^3}\int_{\mt^3}K_{cX}^\epsilon(x-y)A(x)\cdot B(y)\,dy\,dx\\
=& \int_{\mt^3}\int_{\mt^3}K_{cX}^\epsilon(x-y)(A(x)-A(y))\cdot B(y)+K_{cX}^\epsilon(x-y)A(y)\cdot B(y)\,dy\,dx\\
=& \int_{\mt^3}\int_{\mt^3}K_{cX}^\epsilon(x-y)(A(x)-A(y))\cdot B(y)\,dy\,dx+\int_{\mt^3}\left(\int_{\mt^3}K_{cX}^\epsilon(x-y)\,dx\right) A(y)\cdot B(y)\,dy\\
=&\intt \intt K_{cX}^\epsilon(x-y)(A(x)-A(y))\cdot B(y)\,dy\,dx.
\end{split}
\end{equation}
The last equality follows from the antisymmetry of $K_{cX}$, so that 
\begin{equation}
\intt K_{cX}^\epsilon(z)\,dz=-\intt K_{cX}^\epsilon(-z)\,dz=0.
\end{equation}

Throughout we will use the notation that $A^{\otimes 2}=A\otimes A$ for a tensor $A$, so if $A$ lives in an inner product space $V$, and $T$ is an operator from $V$ to itself, $T\cdot A^{\otimes 2}=TA\cdot A$. Generally, for any two tensors of equal rank, $\cdot$ will denote their inner product.

We now define the functional which we will consider for periodic boundary conditions.

\begin{definition}\label{defEnergy}
Let $\mf_\epsilon: L^\infty(\mathbb{T}^3;\mqc)^2\to\mathbb{R}\cup\{+\infty\}$ be defined by  
\begin{equation}\begin{split}
\mf_\epsilon(Q,\xi)= &\int_{\mt^3} \frac{1}{\epsilon^2}\psi_s(Q)+\frac{1}{\epsilon} \rho_0\psi_s(\xi)-\frac{k_{HH}^0}{2}|Q|^2-\frac{\rho_0}{\epsilon} k_{HD}^0 Q\cdot \xi -\frac{\rho_0^2k_{DD}^0}{2}|\xi|^2-c_\epsilon\,dx\\
&+\frac{1}{4\epsilon^2}\int_{\mt^3}\int_{\mt^3}K_{HH}^\epsilon(x-y)\cdot\big(Q(x)-Q(y)\big)^{\otimes 2} +\rho_0^2\epsilon^2K_{DD}^\epsilon\left(x-y\right)\big(\xi(x)-\xi(y)\big)^{\otimes 2}\,dy\,dx\\
&+\frac{\rho_0}{2\epsilon}\int_{\mathbb{T}^3}\int_{\mt^3} K_{DH}^\epsilon\left(x-y\right)\big(Q(x)-Q(y)\big)\cdot\big(\xi(x)-\xi(y)\big)\,dy\,dx\\
&+ \frac{1}{\epsilon^2}\int_{\mt^3}\int_{\mt^3}\rho_0\epsilon K_{cH}^\epsilon\left(x-y\right)\xi(x)\cdot \big(Q(y)-Q(x)\big)+\rho_0^2\epsilon^2K_{cD}^\epsilon\left(x-y\right)\big(\xi(x)-\xi(y)\big)\cdot \xi(y)\,dy\,dx.
\end{split}
\end{equation}
The constant $c_\epsilon$ is defined by 
\begin{equation}\label{eqCEpsilon}
c_\epsilon = \min\limits_{Q,\xi}\frac{1}{\epsilon^2}\left(\psi_s(Q)-\frac{k_{HH}^0}{2}|Q|^2\right)+\frac{\rho_0}{\epsilon}\left(\psi_s(\xi)-k_{HD}^0Q\cdot \xi\right)
\end{equation}
\end{definition}

\section{Heuristic gradient expansion}\label{sectionHeuristic}

Before proceeding with a precise $\Gamma$-convergence argument, we perform a formal argument, which will be consistent with \Cref{thmPeriodicGamma}

If $A,B:\mathbb{T}^3\to\text{Sym}_0(3)$ are sufficiently regular, we expect that 
\begin{equation}\begin{split}
&\frac{1}{\epsilon^2}\intt\intt K^\epsilon_{XY}(x-y)(A(x)-A(y))\cdot (B(x)-B(y))\,dy\,dx\\
=& \intt\int_{\mathbb{R}^3}\frac{1}{\epsilon^5}K_{XY}\left(\frac{x-y}{\epsilon}\right)(A(x)-A(y))\cdot (B(x)-B(y))\,dy\,dx\\
=& \intt\int_{\mathbb{R}^3}\frac{1}{\epsilon^2}K_{XY}\left(z\right) (A(x)-A(x+\epsilon z))\cdot (B(x)-B(x+\epsilon z))\,dz\,dx\\
=& \intt\int_{\mathbb{R}^3}|z|^2K_{XY}\left(z\right)\frac{A(x)-A(x+\epsilon z)}{\epsilon |z|}\cdot \frac{B(x)-B(x+\epsilon z)}{\epsilon|z|}\,dz\,dx\\
\approx & \intt\int_{\mathbb{R}^3}|z|^2K_{XY}\left(z\right) \bigg((\hat{z}\cdot\nabla) A(x)\bigg)\cdot\bigg( (\hat{z}\cdot \nabla)B(x)\bigg)\,dz\,dx\\
=& 2\intt L_{XY}\nabla A(x)\cdot \nabla B(x)\,dx,
\end{split}
\end{equation}
for an appropriate operator $L_{XY}$, given as an integral 
\begin{equation}
L_{XY}=\frac{1}{2}\int_{\mathbb{R}^3}K_{XY}(z)\otimes z\otimes z\,dz
\end{equation}
Similarly, for the chiral terms, we expect 
\begin{equation}
\begin{split}
&\frac{1}{\epsilon}\intt\intt K_{cX}^\epsilon(x-y)A(x)\cdot( B(x)-B(y))\,dy\,dx\\
=& \intt\int_{\mathbb{R}^3} \frac{1}{\epsilon^4}K_{cX}\left(\frac{x-y}{\epsilon}\right)A(x)\cdot( B(x)-B(y))\,dy\,dx\\
=&  \frac{1}{\epsilon}\intt\int_{\mathbb{R}^3} K_{cX}\left(z\right)A(x)\cdot( B(x)-B(x+\epsilon z))\,dz\,dx\\
\approx &-\intt\int_{\mathbb{R}^3}|z| K_{cX}\left(z\right)A(x)\cdot(\hat{z}\cdot \nabla)B(x)\,dz\,dx\\
=& \intt V_{cX}A(x)\cdot \nabla B(x)\,dx,
\end{split}
\end{equation}
where the operator 
\begin{equation}
V_{cX}=-\int_{\mathbb{R}^3}K_{cX}(z)\otimes z\,dz.
\end{equation}
Then, we may substitute these gradient approximations into the energy, to give 
\begin{equation}\label{eqLandauCholesteric}
\begin{split}
\mf_\epsilon(Q,\xi)\approx & \intt\frac{1}{\epsilon^2}\left(\psi_s(Q(x))-\frac{k_{HH}^0}{2}|Q(x)|^2\right)+\frac{\rho_0}{\epsilon}\left(\psi_s(\xi(x))-k_{HD}^0Q(x)\cdot \xi(x)\right)\,dx\\
& + \intt \frac{1}{2}L_{HH}\nabla Q(x)\cdot \nabla Q(x) +\rho_0V_{cH}\xi(x)\cdot \nabla Q(x)-\frac{k_{DD}^0\rho_0^2}{2}|\xi(x)|^2\,dx\\
&+ \epsilon \intt \rho_0L_{HD}\nabla Q(x)\cdot \nabla \xi(x)+\rho^2_0V_{cD}\xi(x)\cdot \nabla \xi(x)\,dx\\
&+\epsilon^2\intt\frac{\rho^2_0}{2}L_{DD}\nabla \xi(x)\cdot \nabla \xi(x)\,dx. 
\end{split}
\end{equation}
We discard all terms of order $\epsilon$ and $\epsilon^2$. The leading order term, of order $\frac{1}{\epsilon^2}$ is the bulk energy of $Q$, $\psi_s(Q)-\frac{k_{HH}^0}{2}|Q|^2$. As this is leading order, we assume our solutions are well approximated by restricting $Q$ to be in the minimising set of this bulk energy, so we take $Q=s_0\left(n\otimes n-\frac{1}{3}I\right)$ for an appropriate constant $s_0$ and $n:\mathbb{T}^3\to\mathbb{S}^2$, as per \Cref{propSingPotential}. 

At the next order, we have a bulk-type energy for the dopant, $\psi_s(\xi)-k_{HD}^0\xi\cdot Q$. We assume this too must be well approximated by restricting $\xi$ to the minimising set of this energy. As $Q$ is prescribed, by symmetry and convexity arguments we can show that $\xi(x)=\frac{s_c}{s_0}Q(x)$ for some constant $s_c$. There is a precise explanation of the symmetry arguments available in \Cref{theoremCompactness}.

Now, we substitute these assumptions into the energy, and obtain that 
\begin{equation}
\mf_\epsilon \left(Q,\frac{s_c}{s_0}Q\right)\approx \intt \frac{1}{2}L_{HH}\nabla Q(x)\cdot \nabla Q(x) +\frac{s_c}{s_0}\rho_0V_{cH}Q(x)\cdot \nabla Q(x)\,dx +c_0.
\end{equation}

By further symmetry arguments (see \Cref{sectionCoefficients}), if $n\in W^{1,2}(\mathbb{T}^3,\mathbb{S}^2)$, we can re-write this energy as 
\begin{equation}\label{eqOFApprox}
\mf_\epsilon \left(Q,\frac{s_c}{s_0}Q\right)\approx \frac{s_0^2}{2}\intt K_{11}(\nabla \cdot n(x))^2+K_{22}(n(x)\cdot \nabla\times n(x)+q)^2+K_{33}|n(x)\times \nabla \times n(x)|^2\,dx,
\end{equation}
where $K_{11},K_{22},K_{33}$ are constants relating to the energy penalisation for splay, twist and bend (respectively) of $n$. $q$ is a pseudo-scalar which dictates the tendency to twist. If $q=0$, the ground state is given by configurations constant in space. When $q\neq 0$, the ground state is a helical structure with pitch $\frac{\pi}{q}$. 

This formal argument bears similarities to that posed by Osipov and Kuball \cite{osipov2001helical}, who provide an expression for HTP through similar dispersion energies and a gradient expansion. The most significant development of this work from theirs is that in this work we prove various statements taken as ansatzes or assumptions in theirs, such as the host Q-tensor being uniaxial with fixed order parameter, negligibility of Dopant-Dopant interactions, Host-Dopant alignment and the applicability of a (weak)-gradient theory. In order to bypass making these assumptions we take a simpler constitutive equation on the interaction energies and a scaling limit, under which they may be proven. As the models are different the results cannot be fully consistent, however the results we obtain here are qualitatively consistent with theirs.

\begin{remark}
This formal argument provides insight to some of the more qualitative features of the model. Firstly, we see that all dopant-dopant interactions do not contribute to the limiting energy. Similarly, we see no change to the Frank constants themselves, nor the equilibrium order parameter of the Q-tensor compared to if we have a pure host system. These results are of course artifacts of the dilute regime we consider, and this formal analysis suggests there are order $\epsilon$ correction to the Frank constants depending on the Host-Dopant interactions, an order $\epsilon$ correction to the intrinsic twist $q$ depending on the Dopant-Dopant chiral interaction, and order $\epsilon^2$ corrections to the Frank constants from the Dopant-Dopant interactions. Further corrections should be expected for $\epsilon>0$ as the order parameters may move away from the bulk minimisers to relax elastic contributions to the energy. We will not focus on these corrections within this work, and proceed to obtain the relationship \eqref{eqOFApprox} in a rigorous sense. 
\end{remark}

\section{$\Gamma$-convergence}\label{sectionGammaConvergence}

\subsection{Preliminary results}

The estimates in \Cref{assumptionInteraction} translate into estimates on the non-local part of the energy, in the form 
\begin{equation}\begin{split}\label{eqCoercivityEstimate}
& C_1\intt\intt g^\epsilon(x-y)\left(\frac{|Q(x)-Q(y)|^2}{\epsilon^2}+\rho_0^2|\xi(x)-\xi(y)|^2-1\right)\,dy\,dx\\
\leq &-\frac{1}{4\epsilon^2}\int_{\mt^3}\int_{\mt^3}K_{HH}^\epsilon(x-y)\cdot\big(Q(x)-Q(y)\big)^{\otimes 2} +\rho_0^2\epsilon^2K_{DD}^\epsilon\left(x-y\right)\big(\xi(x)-\xi(y)\big)^{\otimes 2}\,dy\,dx\\
&+\frac{\rho_0}{2\epsilon}\int_{\mathbb{T}^3}\int_{\mt^3} K_{DH}^\epsilon\left(x-y\right)\big(Q(x)-Q(y)\big)\cdot\big(\xi(x)-\xi(y)\big)\,dy\,dx\\
&+ \frac{1}{\epsilon^2}\int_{\mt^3}\int_{\mt^3}\rho_0\epsilon K_{cH}^\epsilon\left(x-y\right)\xi(x)\cdot \big(Q(y)-Q(x)\big)+\rho_0^2\epsilon^2K_{cD}^\epsilon\left(x-y\right)\big(\xi(x)-\xi(y)\big)\cdot \xi(y)\,dy\,dx\\
\leq & C_2\intt\intt g^\epsilon(x-y)\left(\frac{|Q(x)-Q(y)|^2}{\epsilon^2}+\rho_0^2|\xi(x)-\xi(y)|^2+1\right)\,dy\,dx,
\end{split}
\end{equation}

We recall some compactness and continuity results from \cite{taylor2018oseen}

\begin{proposition}\label{propCompactBilinear}
Let $A_\epsilon\in L^\infty(\mt^3,\overline{\mathcal{Q}})$ be so that there exists some $M\in\mathbb{R}$ with 
\begin{equation}
\frac{1}{\epsilon^2}\intt\intt g^\epsilon(z)|A_\epsilon(x)-A_\epsilon(y)|^2\,dx\,dy<M
\end{equation}
for all $\epsilon>0$. Then there exists some sequence $\epsilon_j\to 0$ and $A\in W^{1,2}(\mt^3, \overline{\mathcal{Q}})$ so that $A_{\epsilon_j}=A_j\overset{L^2}{\to}A$. 
\end{proposition}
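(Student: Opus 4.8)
The plan is to move to the Fourier side, where the weight $g^\epsilon$ turns the finite-difference energy into a Fourier multiplier whose symbol can be bounded above and below; the whole argument then reduces to controlling that symbol. Writing $A_\epsilon(x)=\sum_{k\in\mathbb{Z}^3}\hat A_\epsilon(k)e^{ik\cdot x}$, unfolding $g^\epsilon$ through the duality identity and applying Parseval to the bracket in $\frac{1}{\epsilon^2}\int_{\mathbb{R}^3}g(w)\big[\int_{\mt^3}|A_\epsilon(y+\epsilon w)-A_\epsilon(y)|^2\,dy\big]\,dw$, the hypothesis becomes
\[
(2\pi)^3\sum_{k}|\hat A_\epsilon(k)|^2\,\mu_\epsilon(k)\le M,\qquad \mu_\epsilon(k):=\frac{4}{\epsilon^2}\int_{\mathbb{R}^3}g(w)\sin^2\!\Big(\tfrac{\epsilon\,k\cdot w}{2}\Big)\,dw.
\]
Using $\sin^2 t\le t^2$ and $\int|w|^2 g<\infty$ gives the upper bound $\mu_\epsilon(k)\le \mathbb{L}k\cdot k\le C|k|^2$, with $\mathbb{L}_{ij}=\int g\,w_iw_j\,dw$, and dominated convergence gives $\mu_\epsilon(k)\to \mathbb{L}k\cdot k$ for each fixed $k$.

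Since $\|A_\epsilon\|_\infty<\tfrac23$, the family is bounded in $L^2(\mt^3)$, so I extract $\epsilon_j\to0$ with $A_{\epsilon_j}\rightharpoonup A$ weakly in $L^2$, whence $\hat A_{\epsilon_j}(k)\to\hat A(k)$ for every $k$. For the regularity of the limit I argue by lower semicontinuity on truncated sums: for each $N$,
\[
\sum_{|k|\le N}|\hat A(k)|^2\,\mathbb{L}k\cdot k=\lim_j\sum_{|k|\le N}|\hat A_{\epsilon_j}(k)|^2\mu_{\epsilon_j}(k)\le \frac{M}{(2\pi)^3},
\]
and letting $N\to\infty$ with $\mathbb{L}k\cdot k=\lambda|k|^2$ (frame-indifference forces $g$ radial, so $\mathbb{L}=\lambda\,\mathrm{Id}$ with $\lambda=\tfrac13\int|w|^2 g>0$) yields $\sum_k|k|^2|\hat A(k)|^2<\infty$, i.e. $A\in W^{1,2}(\mt^3)$. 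That $A\in\mqc$ a.e. follows from closedness and convexity of $\mqc$, or directly from a.e. convergence along a further subsequence once strong convergence is in hand.

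The main obstacle, and the crux of the argument, is upgrading weak to strong $L^2$ convergence, for which I need the uniform lower bound $\mu_\epsilon(k)\ge c\min(|k|^2,\epsilon^{-2})$. For $\epsilon|k|$ small I use $\sin^2 t\ge\tfrac12 t^2$ on a fixed ball carrying most of the second moment of $g$ to get $\mu_\epsilon(k)\ge c_1|k|^2$; for $\epsilon|k|$ large I write $\sin^2=\tfrac12(1-\cos)$, so that $\mu_\epsilon(k)=\tfrac{2}{\epsilon^2}\big(\|g\|_{L^1}-\hat g(\epsilon k)\big)$, and the Riemann–Lebesgue lemma together with the strict inequality $\hat g(\zeta)<\|g\|_{L^1}$ for $\zeta\neq0$ gives $\mu_\epsilon(k)\ge c_2\epsilon^{-2}$. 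With this bound I split $\|A_{\epsilon_j}-A\|_{L^2}^2=(2\pi)^3\sum_k|\hat A_{\epsilon_j}(k)-\hat A(k)|^2$ into $|k|\le N$ and $|k|>N$: the finite low-frequency block vanishes as $j\to\infty$ by coefficientwise convergence, while for $j$ large enough that $\epsilon_j\le 1/N$ one has $\min(|k|^2,\epsilon_j^{-2})\ge N^2$ on $\{|k|>N\}$, so the high-frequency tail of $A_{\epsilon_j}$ is controlled by $\tfrac{M}{cN^2}$, and the tail of $A$ is small since $A\in L^2$. Sending first $j\to\infty$ and then $N\to\infty$ gives $A_{\epsilon_j}\to A$ strongly in $L^2$. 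I expect the delicate points to be the intermediate-frequency range $\epsilon|k|\sim1$ in the lower bound (handled by compactness of the corresponding annulus and continuity of $\hat g$) and the correct ordering of the limits in $N$ and $j$.
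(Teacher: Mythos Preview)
The paper does not prove this proposition; it is explicitly quoted from the author's earlier work \cite{taylor2018oseen} (``We recall some compactness and continuity results from \cite{taylor2018oseen}''), so there is no in-paper argument to compare against directly.

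Your Fourier-analytic proof is correct and self-contained. The standard route to such nonlocal-to-local compactness statements (in the Bourgain--Brezis--Mironescu / Ponce tradition, which is the lineage of \cite{taylor2018oseen}) typically goes via the Fr\'echet--Kolmogorov criterion: one shows uniform control on translations $\|A_\epsilon(\cdot+h)-A_\epsilon\|_{L^2}$ using the energy bound, extracts an $L^2$-convergent subsequence, and then identifies the $W^{1,2}$ limit by a lower-semicontinuity argument on difference quotients. Your spectral approach is genuinely different and, on the torus, arguably cleaner: the multiplier bound $\mu_\epsilon(k)\ge c\min(|k|^2,\epsilon^{-2})$ gives both the tail estimate needed for strong $L^2$ convergence and the $W^{1,2}$ regularity of the limit in one stroke, without any mollification step. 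The trade-off is that your argument is specific to the periodic setting, whereas the real-space approach adapts more readily to bounded domains.

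One phrasing to tighten: ``$\sin^2 t\ge\tfrac12 t^2$ on a fixed ball'' is slightly ambiguous. What you need is: fix $R$ large enough that $\int_{|w|\le R}|w|^2 g(w)\,dw>0$, then for $\epsilon|k|\le 2/R$ one has $|\epsilon k\cdot w/2|\le 1$ on $\{|w|\le R\}$, and on $|t|\le 1$ the inequality $\sin^2 t\ge (\sin^2 1)\,t^2$ holds. Since $g$ is radial the truncated second moment is a positive multiple of the identity, yielding $\mu_\epsilon(k)\ge c_1|k|^2$ in that regime. The intermediate and large $\epsilon|k|$ regimes are handled exactly as you describe, using continuity and the strict inequality $\hat g(\zeta)<\|g\|_{L^1}$ for $\zeta\neq 0$, which follows from $g\ge 0$ and $g\in L^1$.
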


\begin{proposition}\label{propLiminfNonlocal}

Let $K$ satisfy the estimates in \Cref{assumptionInteraction}. If $\epsilon_j\to 0$, $A_{\epsilon_j}\in L^\infty(\mt^3,\overline{\mathcal{Q}})$ converge in $L^2$ to some $A\in W^{1,2}(\mt^3,\overline{\mathcal{Q}})$, then 
\begin{equation}
\liminf\limits_{j\to\infty} \intt\intt \frac{1}{\epsilon^2}K^\epsilon(x-y)\left(A(x)-A(y)\right)^{\otimes 2}\,dy\,dx \geq \intt L\nabla A(x)\cdot \nabla A(x)\,dx,
\end{equation}
where 
\begin{equation}
L\nabla A\cdot \nabla A = \int_{\mathbb{R}^3} K(z)\frac{\partial A}{\partial x_\beta}\cdot \frac{\partial A}{\partial x_\alpha}z_\alpha z_\beta\,dz.
\end{equation}\end{proposition}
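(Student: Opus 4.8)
The goal is a lower semicontinuity (liminf) inequality for the nonlocal quadratic form, showing it dominates the local Dirichlet-type form $\int L\nabla A\cdot\nabla A$ in the limit. This is a standard $\Gamma$-liminf estimate, and I would attack it by a blow-up/Fourier or difference-quotient comparison argument, exploiting the positive semidefiniteness hidden in the structure.

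\textbf{Setup and reduction.} The plan is to first unfold the periodic summation defining $K^\epsilon$ and change variables. Writing the inner integral over $\mt^3$ as an integral over $\mathbb{R}^3$ against $K(z)$, substituting $y=x+\epsilon z$, the left-hand side becomes
\begin{equation}
\intt\int_{\mathbb{R}^3} K(z)\,\frac{\big(A(x)-A(x+\epsilon z)\big)^{\otimes 2}}{\epsilon^2}\,dz\,dx.
\end{equation}
Since $A\in W^{1,2}(\mt^3,\mqc)$, the key object is the rescaled difference quotient $D^\epsilon_z A(x):=\frac{A(x+\epsilon z)-A(x)}{\epsilon}$, which for fixed $z$ converges in $L^2(\mt^3)$ to the directional derivative $(z\cdot\nabla)A$ as $\epsilon\to0$. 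First I would establish this pointwise-in-$z$ $L^2$ convergence, which is the standard fact that difference quotients of $W^{1,2}$ functions converge to derivatives, here along the direction $z$ with the correct scaling built in. The quantity inside then reads $K(z)\,D^\epsilon_z A(x)\cdot D^\epsilon_z A(x)$, and integrating in $x$ gives a function of $z$ bounded above, after using $\|D^\epsilon_z A\|_{L^2}\le |z|\,\|\nabla A\|_{L^2}$, by $|z|^2 g(z)$ up to a constant (by \Cref{assumptionInteraction}, $K(z)\le C_2 g(z)\mathrm{Id}$), which is integrable by the second-moment hypothesis $\int |x|^2 g\,dx<\infty$.

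\textbf{Passing to the liminf.} Because we only want a lower bound and $A_{\epsilon_j}\to A$ in $L^2$ (not $A$ itself plugged in), I would not work with $A$ directly but with the converging sequence $A_{\epsilon_j}$. The cleanest route is the convexity/duality trick: the map $M\mapsto K(z)M\cdot M$ is a nonnegative quadratic form in $M\in\mathrm{Sym}_0(3)$ for each fixed $z$ (using $K(z)\ge C_1 g(z)\mathrm{Id}\ge0$), hence weakly lower semicontinuous. For each fixed $z$, one shows $D^{\epsilon_j}_z A_{\epsilon_j}\rightharpoonup (z\cdot\nabla)A$ weakly in $L^2(\mt^3)$; this combines the strong $L^2$ convergence $A_{\epsilon_j}\to A$ with the uniform difference-quotient bound guaranteed by the hypothesis of \Cref{propCompactBilinear} (which is exactly the bound $A$ inherits, and which forces the translates to converge to the translate of the limit). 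Then by weak lower semicontinuity of the nonnegative quadratic form,
\begin{equation}
\liminf_{j\to\infty}\intt K(z)\,\big(D^{\epsilon_j}_z A_{\epsilon_j}\big)^{\otimes 2}\,dx\ \ge\ \intt K(z)\,\big((z\cdot\nabla)A\big)^{\otimes 2}\,dx
\end{equation}
for (almost) every fixed $z$. Finally I would integrate this in $z$ and invoke Fatou's lemma to interchange $\liminf_j$ with $\int_{\mathbb{R}^3}\,dz$, which is legitimate because the integrands are nonnegative (after the $-g(z)$-type lower bound is handled, or simply because the relevant quadratic form is nonnegative). Recognising $\int_{\mathbb{R}^3} K(z)\big((z\cdot\nabla)A\big)\cdot\big((z\cdot\nabla)A\big)\,dz=\int_{\mathbb{R}^3}K(z)\,\partial_\beta A\cdot\partial_\alpha A\,z_\alpha z_\beta\,dz=L\nabla A\cdot\nabla A$ then yields the claim after integrating in $x$.

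\textbf{Main obstacle.} The delicate point is the interchange of the limit with the $z$-integration and the joint handling of the $\epsilon_j\to0$ limit together with $A_{\epsilon_j}\to A$. A naive Fatou argument gives the right inequality only if one can first integrate out $x$ for fixed $z$ and then apply Fatou in $z$; the subtlety is justifying the weak convergence $D^{\epsilon_j}_z A_{\epsilon_j}\rightharpoonup (z\cdot\nabla)A$ uniformly enough in $z$ to survive the $z$-integration. I expect the hardest part to be controlling the two limits simultaneously: one must show that the diagonal dependence (both the shrinking $\epsilon_j$ in the difference quotient and the varying sequence $A_{\epsilon_j}$) does not destroy the lower bound. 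The resolution is that the uniform second-moment bound on $g$ provides an integrable dominating envelope $|z|^2 g(z)$ that makes the $z$-integral a genuine Fatou application, and the strong $L^2$ convergence of $A_{\epsilon_j}$ upgrades the difference quotients' convergence to the weak convergence needed. Since the statement merely asserts this result is ``recalled from'' \cite{taylor2018oseen}, I would cite that reference for the full technical execution and present only this argument structure, noting that the two-scale structure here is identical to the single-field case treated there.
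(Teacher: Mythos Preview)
The paper does not give its own proof of this proposition: it is stated among results ``recall[ed] \ldots\ from \cite{taylor2018oseen}'' and is used as a black box. You correctly observe this at the end of your proposal, and your sketch---change of variables to difference quotients $D^{\epsilon_j}_z A_{\epsilon_j}$, weak lower semicontinuity of the nonnegative quadratic form $M\mapsto K(z)M\cdot M$ for fixed $z$, then Fatou in $z$---is the standard route for such nonlocal-to-local liminf inequalities and is consistent with how the cited reference proceeds.

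One remark on the obstacle you flag: the $L^2$ bound $\|D^{\epsilon}_z A\|_{L^2}\le |z|\,\|\nabla A\|_{L^2}$ you invoke holds for the limit $A\in W^{1,2}$, but \emph{not} a priori for the sequence $A_{\epsilon_j}$, since the energy bound only controls $\int_{\mathbb{R}^3} g(z)\|D^{\epsilon_j}_z A_{\epsilon_j}\|_{L^2}^2\,dz$ and not the individual slices. So the weak convergence $D^{\epsilon_j}_z A_{\epsilon_j}\rightharpoonup (z\cdot\nabla)A$ is not available pointwise in $z$ from boundedness alone. The usual resolution (as in \cite{taylor2018oseen} and the Bourgain--Brezis--Mironescu style arguments it follows) is either a duality/test-function argument identifying the limit in the sense of distributions first, or restricting to a subsequence along which the full double integral is bounded and then using Fatou directly on the nonnegative integrand over $\mt^3\times\mathbb{R}^3$ without separating the $x$- and $z$-integrations. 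Your plan to cite \cite{taylor2018oseen} for the technical execution is exactly what the paper does.
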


\begin{proposition}\label{propLimsupNonlocal}

Let $K$ satisfy the estimates in \Cref{assumptionInteraction}. If $\epsilon_j\to 0$, $A_{\epsilon_j}\in W^{1,2}(\mt^3,\overline{\mathcal{Q}})$ converge strongly in $W^{1,2}$ to some $A$, then 
\begin{equation}
\lim\limits_{j\to\infty} \intt\intt \frac{1}{\epsilon^2}K^\epsilon(x-y)\left(A(x)-A(y)\right)^{\otimes 2}\,dy\,dx= \intt L\nabla A(x)\cdot \nabla A(x)\,dx,
\end{equation}
where 
\begin{equation}
L\nabla A\cdot \nabla A = \int_{\mathbb{R}^3} K(z)\frac{\partial A}{\partial x_\beta}\cdot \frac{\partial A}{\partial x_\alpha}z_\alpha z_\beta\,dz.
\end{equation}\end{proposition}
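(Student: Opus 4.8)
The plan is to prove the stronger statement that the quadratic forms converge as a genuine limit, exploiting that strong $W^{1,2}$-convergence lets us decouple the dependence on the varying field from the nonlocal-to-local passage. Write $A_j:=A_{\epsilon_j}$ and abbreviate the nonlocal quadratic form by
\[
G_\epsilon(U)=\intt\intt \frac{1}{\epsilon^2}K^\epsilon(x-y)\big(U(x)-U(y)\big)^{\otimes 2}\,dy\,dx,
\]
with associated bilinear form $B_\epsilon(U,V)$ obtained by polarisation, and let $D_\epsilon(U)$ denote the analogous expression with $K^\epsilon$ replaced by the scalar kernel $g^\epsilon$. The proof then splits into two independent pieces: (i) convergence of $G_{\epsilon_j}(A)$ for the \emph{fixed} limit field $A$, and (ii) the estimate $G_{\epsilon_j}(A_j)-G_{\epsilon_j}(A)\to 0$.

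For (i), I would first rewrite $G_\epsilon(A)$ as a local integral using the periodisation identity defining $K^\epsilon$: for fixed $x$ the function $y\mapsto(A(x)-A(y))^{\otimes 2}$ is $2\pi$-periodic, so unfolding the sum over $\mathbb{Z}^3$ and substituting $y=x-\epsilon z$ gives
\[
G_\epsilon(A)=\int_{\mathbb{R}^3}\left(\intt K(z)\left(\frac{A(x+\epsilon z)-A(x)}{\epsilon}\right)^{\otimes 2}dx\right)dz=:\int_{\mathbb{R}^3}\Phi_\epsilon(z)\,dz.
\]
For each fixed $z$, the standard difference-quotient property of Sobolev functions on the torus gives $\frac{A(\cdot+\epsilon z)-A}{\epsilon}\to z\cdot\nabla A$ in $L^2(\mt^3)$, together with the uniform bound $\big\|\frac{A(\cdot+\epsilon z)-A}{\epsilon}\big\|_{L^2}\le|z|\,\|\nabla A\|_{L^2}$. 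Since $K(z)$ is a fixed bounded operator, $\Phi_\epsilon(z)\to\intt K(z)(z\cdot\nabla A)^{\otimes 2}\,dx$ pointwise in $z$, while $|\Phi_\epsilon(z)|\le C_2\,g(z)\,|z|^2\|\nabla A\|_{L^2}^2$ by \eqref{eqEstimate1}. The second-moment assumption $\int_{\mathbb{R}^3}|z|^2 g(z)\,dz<\infty$ then furnishes an $\epsilon$-independent dominating function, and dominated convergence yields $G_{\epsilon_j}(A)\to\intt L\nabla A\cdot\nabla A\,dx$ with $L$ as in the statement.

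For (ii), I would use the quadratic structure: $G_\epsilon(A_j)-G_\epsilon(A)=B_\epsilon(A_j-A,A_j-A)+2B_\epsilon(A_j-A,A)$. The upper bound $K^\epsilon(z)\le C_2 g^\epsilon(z)\,\text{Id}$ from \Cref{assumptionInteraction} gives $\|K^\epsilon(z)\|_{\text{op}}\le C_2 g^\epsilon(z)$ and hence $|B_\epsilon(U,V)|\le C_2\sqrt{D_\epsilon(U)}\sqrt{D_\epsilon(V)}$ by Cauchy--Schwarz against the measure $\frac{1}{\epsilon^2}g^\epsilon(x-y)\,dy\,dx$. The same change of variables and difference-quotient bound as in (i) give the $\epsilon$-uniform Dirichlet estimate $D_\epsilon(U)\le\big(\int_{\mathbb{R}^3}|z|^2 g(z)\,dz\big)\|\nabla U\|_{L^2}^2$. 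Combining these,
\[
|G_{\epsilon_j}(A_j)-G_{\epsilon_j}(A)|\le C\,\|\nabla(A_j-A)\|_{L^2}^2+C\,\|\nabla(A_j-A)\|_{L^2}\,\|\nabla A\|_{L^2},
\]
which vanishes as $j\to\infty$ since $A_j\to A$ strongly in $W^{1,2}$. Together with (i) this gives the claimed limit.

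The quadratic-form manipulations in (ii) are routine, following immediately from the two-sided bounds in \Cref{assumptionInteraction} and essentially identical to the one-component case in \cite{taylor2018oseen}. The genuine analytic content, and the step I expect to require the most care, is the dominated convergence in (i): one must verify both the a.e.-$z$ convergence of the inner $x$-integral (the $L^2$-convergence of difference quotients, obtained by approximating $A\in W^{1,2}$ with smooth fields and invoking continuity of translations in $L^2$) and, crucially, that a \emph{single} dominating function integrable against $|z|^2 g(z)\,dz$ controls the whole sequence $\Phi_{\epsilon_j}$. This is precisely where the finiteness of the second moment of $g$ in \Cref{assumptionInteraction} is essential.
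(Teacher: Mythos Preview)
Your argument is correct. The paper itself does not prove this proposition: it is one of three results explicitly \emph{recalled} from \cite{taylor2018oseen} (see the sentence preceding \Cref{propCompactBilinear}), so there is no in-paper proof to compare against. Your two-step decomposition---(i) nonlocal-to-local convergence for the fixed limit field via unfolding the periodisation, $L^2$-convergence of difference quotients, and dominated convergence against $|z|^2 g(z)$, followed by (ii) uniform continuity of the quadratic forms in the $W^{1,2}$-seminorm via polarisation and the Dirichlet estimate $D_\epsilon(U)\le\big(\int|z|^2 g\big)\|\nabla U\|_{L^2}^2$---is the natural and standard route, and is consistent with how such results are typically established (and presumably how it is done in the cited reference). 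No gaps.
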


Furthermore, we recall the definition of $\Gamma$-convergence \cite{braides2002gamma}, in a simplified form relevant for this work.

\begin{definition}
Let $F_\epsilon:V\to\mathbb{R}\cup\{+\infty\}$, with $V$ a normed vector space. Then we say that $F_\epsilon$ $\Gamma$-converges to $F:V\to\mathbb{R}\cup\{+\infty\}$ if the following hold:
\begin{itemize}
\item (Liminf inequality): For every sequence $v_\epsilon \to v$, $\liminf\limits_{\e\to 0}F_\e(v_\e)\geq F(v)$.
\item (Limsup inequality): For every $v\in V$, there exists a sequence $v_\e\to v$ with $\lim\limits_{\e\to 0 }F_\e(v_\e)=F(v)$. 
\end{itemize}
\end{definition}

Furthermore, if we have $\Gamma$-convergence and a coercivity property, then we may obtain the fundamental theorem of $\Gamma$-convergence, which motivates its definition.  
\begin{proposition}
Assume that $F_\e$ $\Gamma$-converges to $F$. Furthermore, assume that $F_\e$ is equicoercive, in the sense that if $F_\e(v_\e)$ is uniformly bounded for a sequence $(v_\e)_{\e>0}$, there exists a subsequence $v_j=v_{\e_j}$ and $v\in V$ so that $v_j\to v$. Then 
\begin{enumerate}
\item $\lim\limits_{\e\to 0}\inf\limits_{V} F_\e=\min\limits_{V} F$.
\item If $v_\e$ is a sequence of approximate minimisers, so that $F_\e(v_\e)-\inf\limits_{V}F_\e\to 0$, then there exists a subsequence $v_j=v_{\e_j}$ and $v$ with $F(v)=\min\limits_{V}F$ so that $v_j \to v$. 
\end{enumerate}
In particular, minimisers of $F$ exist, and minimisers of $F_\e$ converge, up to subsequences, to minimisers of $F$.
\end{proposition}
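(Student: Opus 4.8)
The plan is to prove the two assertions separately: part 1 is obtained by sandwiching $\inf_V F_\e$ between a limsup upper bound and a liminf lower bound, and part 2 emerges as a byproduct of the compactness argument used for that lower bound. The upper bound comes entirely from the limsup (recovery sequence) half of $\Gamma$-convergence and needs no compactness, whereas the lower bound, together with the convergence of approximate minimisers, comes from combining equicoercivity with the liminf half. Throughout I assume $\inf_V F < +\infty$; otherwise $F\equiv+\infty$, the statement reads trivially, and the upper bound below already forces $\inf_V F_\e\to+\infty$.

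First I would establish $\limsup_{\e\to 0}\inf_V F_\e \le \inf_V F$. Fix any $v\in V$. The limsup inequality supplies a recovery sequence $v_\e\to v$ with $\lim_{\e\to 0}F_\e(v_\e)=F(v)$. Since $\inf_V F_\e\le F_\e(v_\e)$, passing to $\limsup$ gives $\limsup_{\e\to 0}\inf_V F_\e\le F(v)$, and as $v$ was arbitrary, taking the infimum over $v\in V$ yields the claim. Note that no compactness enters here.

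Next I would establish $\liminf_{\e\to 0}\inf_V F_\e \ge \inf_V F$ and simultaneously exhibit a minimiser. Choose a sequence $\e_j\to 0$ realising $\ell:=\liminf_{\e\to 0}\inf_V F_\e$; by the upper bound we may assume $\ell<+\infty$. Pick near-infimisers $v_j$ with $F_{\e_j}(v_j)\le \inf_V F_{\e_j}+\e_j$, so that $F_{\e_j}(v_j)\to\ell$ and in particular $\sup_j F_{\e_j}(v_j)<+\infty$. Equicoercivity then extracts a (relabelled) subsequence with $v_j\to v$ in $V$, and the liminf inequality applied to this convergent sequence gives $F(v)\le\liminf_j F_{\e_j}(v_j)=\ell$. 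Hence $\inf_V F\le F(v)\le\ell$. Combined with the upper bound, all quantities coincide, so $\lim_{\e\to 0}\inf_V F_\e=\inf_V F$, the infimum of $F$ is attained at $v$, and $\min_V F$ exists.

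Finally, for part 2, let $v_\e$ satisfy $F_\e(v_\e)-\inf_V F_\e\to 0$. Writing $F_\e(v_\e)=\big(F_\e(v_\e)-\inf_V F_\e\big)+\inf_V F_\e$ and invoking part 1 gives $F_\e(v_\e)\to\min_V F<+\infty$, so the energies are uniformly bounded along the sequence; equicoercivity furnishes a subsequence $v_j\to v$, and the liminf inequality gives $F(v)\le\liminf_j F_{\e_j}(v_j)=\min_V F$, forcing $F(v)=\min_V F$. The main obstacle is purely the bookkeeping around equicoercivity: its hypothesis only grants compactness to sequences of \emph{bounded} energy, so the argument must first certify that the near-infimisers and the approximate minimisers have uniformly bounded $F_\e$-values (which finiteness of $\ell$ and of $\min_V F$ supply), and must dispatch the degenerate $\inf_V F=+\infty$ case before invoking compactness and the liminf inequality.
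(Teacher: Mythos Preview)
Your proof is correct and is the standard argument for the fundamental theorem of $\Gamma$-convergence. The paper itself does not prove this proposition at all: it is stated as a known background result, with the reference \cite{braides2002gamma} given alongside the definition of $\Gamma$-convergence, and the paper then moves on to verifying equicoercivity, the liminf inequality, and the limsup inequality for its particular functional $\mf_\e$.

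One very minor quibble: your handling of the degenerate case $\inf_V F=+\infty$ is slightly off. You write that ``the upper bound below already forces $\inf_V F_\e\to+\infty$'', but the upper bound $\limsup_\e \inf_V F_\e\le \inf_V F=+\infty$ is vacuous here. What actually forces $\inf_V F_\e\to+\infty$ is your \emph{lower bound} argument run contrapositively: if $\ell=\liminf_\e\inf_V F_\e$ were finite, your near-infimiser construction plus equicoercivity and the liminf inequality would produce $v$ with $F(v)\le\ell<+\infty$, contradicting $F\equiv+\infty$. This is entirely cosmetic and does not affect the validity of the argument.
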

In essence, this result justifies the interpretation that $\Gamma$-convergence as an appropriate method of convergence for minimisation problems. In this section, we proceed to show the equicoercivity property (\Cref{subsecCompact}), the liminf inequality (\Cref{subsecLiminf}) and limsup inequality (\Cref{subsecLimsup}).

\subsection{Compactness}\label{subsecCompact}

\begin{definition}
Let $\mm$ define the minimising set of the bulk energy for the host Q-tensor in the absence of dopant. Explicitly, 
\begin{equation}
\mm= \left\{ Q\in \mq : \psi_s(Q)-\frac{k_{HH}^0}{2}|Q|^2=\min\limits_{\tilde Q}\psi_s(\tilde Q)-\frac{k_{HH}^0}{2}|\tilde Q|^2\right\}.
\end{equation}
 Furthermore, we define \begin{equation}c_0=\min\limits_{\tilde Q}\psi_s(\tilde Q)-\frac{k_{HH}^0}{2}|\tilde Q|^2\end{equation}
\end{definition}

\begin{assumption}
We take $k_{HH}^0$ sufficiently large so that 
\begin{equation}
\mm=\left\{s_0\sigma(n):n\in\mathbb{S}^2\right\},
\end{equation}
where $s_0>0$ is dependent on $k_{HH}^0$.
\end{assumption}

\begin{proposition}\label{propQL2}
Let $Q_\e,\xi_\e \in L^\infty(\mt^3,\mqc)$. If $\mf_\epsilon(Q_\epsilon,\xi_\epsilon)$ is uniformly bounded, then there exists some subsequence $\epsilon_j\to 0$ and $Q\in W^{1,2}(\mt^3,\mqc)$ so that $Q_{\epsilon_j}\overset{L^2}{\to} Q$.
\end{proposition}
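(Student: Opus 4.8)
The plan is to reduce the statement to a direct application of \Cref{propCompactBilinear} with $A_\epsilon = Q_\epsilon$. That proposition asks only that the quantity $\frac{1}{\epsilon^2}\intt\intt g^\epsilon(x-y)|Q_\epsilon(x)-Q_\epsilon(y)|^2\,dx\,dy$ be bounded uniformly in $\epsilon$, after which it produces both the subsequence $\epsilon_j\to 0$ and a limit $Q\in W^{1,2}(\mt^3,\mqc)$ with $Q_{\epsilon_j}\overset{L^2}{\to}Q$. So the entire task is to extract this one bound from the hypothesis $\mf_\epsilon(Q_\epsilon,\xi_\epsilon)\le M$.

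First I would split $\mf_\epsilon$ into its local (bulk) part and its non-local part as they appear in \Cref{defEnergy}, and bound each from below. For the bulk part, the decisive observation is that $c_\epsilon$ in \eqref{eqCEpsilon} is precisely the pointwise infimum of the two singularly-scaled groupings $\frac{1}{\epsilon^2}\big(\psi_s(Q)-\frac{k_{HH}^0}{2}|Q|^2\big)$ and $\frac{\rho_0}{\epsilon}\big(\psi_s(\xi)-k_{HD}^0 Q\cdot\xi\big)$. Hence, after subtracting $c_\epsilon$, the sum of all terms carrying a negative power of $\epsilon$ is non-negative pointwise, and the only leftover contribution to the bulk density is $-\frac{\rho_0^2 k_{DD}^0}{2}|\xi|^2$, which is bounded below by a constant independent of $\epsilon$ since $\|\xi\|_\infty<\frac{2}{3}$ for any $\xi\in L^\infty(\mt^3,\mqc)$. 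This yields $\int_{\mt^3}(\text{bulk density})\,dx\ge -C$. In passing one must check that $c_\epsilon$ is finite and unambiguous: the joint minimum is attained with the host factor in $\mm$, and by frame indifference of $\psi_s$ the value of the dopant minimisation $\min_\xi \psi_s(\xi)-k_{HD}^0 Q\cdot\xi$ is the same for every element of $\mm$, so the definition is independent of which host minimiser is selected.

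For the non-local part I would invoke the lower bound in the coercivity estimate \eqref{eqCoercivityEstimate}, which controls all of the non-local terms simultaneously from below by $C_1\intt\intt g^\epsilon(x-y)\big(\epsilon^{-2}|Q(x)-Q(y)|^2+\rho_0^2|\xi(x)-\xi(y)|^2-1\big)\,dy\,dx$. The constant $-1$ integrates to an $\epsilon$-independent quantity, since a change of variables gives $\int_{\mt^3}g^\epsilon=\|g\|_{L^1(\mathbb{R}^3)}$ and hence $\intt\intt g^\epsilon(x-y)\,dy\,dx=(2\pi)^3\|g\|_{L^1}$. Combining the two lower bounds with $\mf_\epsilon(Q_\epsilon,\xi_\epsilon)\le M$ and discarding the manifestly non-negative dopant-difference term yields $\frac{C_1}{\epsilon^2}\intt\intt g^\epsilon(x-y)|Q_\epsilon(x)-Q_\epsilon(y)|^2\,dy\,dx\le M+C+C_1(2\pi)^3\|g\|_{L^1}$, which is exactly the hypothesis of \Cref{propCompactBilinear}, finishing the argument.

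The proof is short because \eqref{eqCoercivityEstimate} does the analytic heavy lifting; the only genuine point requiring care is the bookkeeping of the two $\epsilon$-scales and the correct use of $c_\epsilon$ to absorb the divergent bulk contributions. It is worth emphasising that \eqref{eqCoercivityEstimate} weights the host differences by $\epsilon^{-2}$ but the dopant differences only by $O(1)$, so the same argument does \emph{not} produce a uniform bound on $\frac{1}{\epsilon^2}\intt\intt g^\epsilon|\xi_\epsilon(x)-\xi_\epsilon(y)|^2$. This is the structural reason the conclusion asserts $W^{1,2}$-compactness for the host $Q$ alone: the Sobolev control on the dopant $\xi$ degenerates by a factor $\epsilon^2$ in the dilute limit.
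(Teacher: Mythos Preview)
Your proof is correct and follows the same route as the paper: use the coercivity estimate \eqref{eqCoercivityEstimate} to extract a uniform bound on $\frac{1}{\epsilon^2}\intt\intt g^\epsilon(x-y)|Q_\epsilon(x)-Q_\epsilon(y)|^2\,dy\,dx$, then invoke \Cref{propCompactBilinear}. The paper's proof is a one-line application of \eqref{eqCoercivityEstimate}; you have simply made explicit the step it leaves implicit, namely that the bulk density is bounded below (via the definition of $c_\epsilon$ and $\|\xi\|_\infty<\tfrac{2}{3}$) so that the energy bound indeed transfers to the non-local part.
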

\begin{proof}
The coercivity estimate \eqref{eqCoercivityEstimate} implies that if $\mathcal{F}_\epsilon(Q_\epsilon,\xi_\epsilon)$ is bounded, then 
\begin{equation}
\frac{1}{\epsilon^2}\intt\intt g^\epsilon(x-y)|Q_\epsilon(x)-Q_\e(y)|^2
\end{equation}
is bounded, so by \Cref{propCompactBilinear}, we have compactness. 
\end{proof}

\begin{proposition}
Let $Q_\e,\xi_\e \in L^\infty(\mt^3,\mqc)$. If $\mf_\epsilon(Q_\epsilon,\xi_\epsilon)$ is uniformly bounded, then any $L^2$ cluster point of $Q_\epsilon$ must be pointwise almost everywhere $\mm$ valued. Furthermore, 
\begin{equation}
\frac{c_\epsilon^2-c_0}{\epsilon}\leq -\rho_0\ln\int_{\mathbb{S}^2}\exp\left(k_{DH}^0Q_0p\cdot p\right)\,dp=\rho_0\min\limits_{\xi}\psi_s(\xi)-k_{DH}^0Q_0\cdot\xi,
\end{equation}
where $Q_0\in\mm$ and $c_\epsilon$ is as in \eqref{eqCEpsilon}.
\end{proposition}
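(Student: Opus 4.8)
The plan is to split $\mf_\epsilon$ into its local (``bulk'') and non-local parts, bound the non-local part below by a constant through the coercivity estimate \eqref{eqCoercivityEstimate}, and thereby reduce everything to a single pointwise non-negative bulk functional whose integral is controlled. Writing $\phi(Q)=\psi_s(Q)-\frac{k_{HH}^0}{2}|Q|^2$ and grouping the local terms of \Cref{defEnergy} (with the $\frac{1}{\epsilon^2}$ scaling on the $|Q|^2$ term), the local integrand reads
\begin{equation*}
\frac{1}{\epsilon^2}\phi(Q)+\frac{\rho_0}{\epsilon}\big(\psi_s(\xi)-k_{HD}^0 Q\cdot\xi\big)-c_\epsilon-\frac{\rho_0^2k_{DD}^0}{2}|\xi|^2.
\end{equation*}
By the definition \eqref{eqCEpsilon} of $c_\epsilon$ the first three terms are pointwise non-negative, while the Dopant-Dopant term is uniformly bounded since $\|\xi\|_\infty<\frac{2}{3}$. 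Discarding the non-negative gradient contributions in \eqref{eqCoercivityEstimate} and absorbing the $-1$ into a constant (legitimate since $\intt\intt g^\epsilon(x-y)\,dy\,dx=(2\pi)^3\|g\|_{L^1}$), a uniform bound $\mf_\epsilon(Q_\epsilon,\xi_\epsilon)\le M$ yields
\begin{equation*}
\intt \frac{1}{\epsilon^2}\phi(Q_\epsilon)+\frac{\rho_0}{\epsilon}\big(\psi_s(\xi_\epsilon)-k_{HD}^0 Q_\epsilon\cdot\xi_\epsilon\big)-c_\epsilon\,dx\le C
\end{equation*}
with $C$ independent of $\epsilon$.

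Next I would minimise out the dopant. For fixed $Q$, the Legendre duality encoded in \Cref{propSingPotential}(4)--(5) gives $\min_\xi\big(\psi_s(\xi)-k_{HD}^0 Q\cdot\xi\big)=-\ln\int_{\mathbb{S}^2}\exp(k_{HD}^0 Q\, p\cdot p)\,dp=:g(Q)$, where $g$ is smooth and, being frame indifferent, constant on the orbit $\mm$. Inserting the test pair $(Q_0,\Lambda^{-1}(k_{HD}^0 Q_0))$ with $Q_0\in\mm$ into \eqref{eqCEpsilon} gives $\epsilon^2 c_\epsilon\le c_0+\epsilon\rho_0 g(Q_0)$, which is precisely the ``Furthermore'' inequality (the quantity intended in the statement being $\epsilon^2 c_\epsilon$):
\begin{equation*}
\frac{\epsilon^2 c_\epsilon-c_0}{\epsilon}\le\rho_0 g(Q_0)=-\rho_0\ln\int_{\mathbb{S}^2}\exp\big(k_{DH}^0 Q_0\, p\cdot p\big)\,dp,
\end{equation*}
the final displayed equality being the same duality computation with $A=k_{DH}^0 Q_0$.

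For the $\mm$-valued conclusion I would use $\psi_s(\xi)-k_{HD}^0 Q\cdot\xi\ge g(Q)$ pointwise together with the upper bound $-c_\epsilon\ge-\frac{c_0}{\epsilon^2}-\frac{\rho_0}{\epsilon}g(Q_0)$ in the integral estimate to obtain
\begin{equation*}
\intt \frac{1}{\epsilon^2}\big(\phi(Q_\epsilon)-c_0\big)+\frac{\rho_0}{\epsilon}\big(g(Q_\epsilon)-g(Q_0)\big)\,dx\le C.
\end{equation*}
Here $\phi(Q)-c_0\ge c_*\,\mathrm{dist}(Q,\mm)^2$ for some $c_*>0$ (nondegeneracy of the bulk minimum transverse to $\mm$, via \Cref{propSingPotential}(7) and frame indifference), while $g(Q)-g(Q_0)\ge-C_*\,\mathrm{dist}(Q,\mm)$ by Lipschitz continuity of $g$ and its constancy on $\mm$. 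Young's inequality $\frac{C_*\rho_0}{\epsilon}d\le\frac{c_*}{2\epsilon^2}d^2+\frac{C_*^2\rho_0^2}{2c_*}$ then gives the pointwise lower bound $\frac{c_*}{2\epsilon^2}\mathrm{dist}(Q,\mm)^2-C'$ for the integrand, whence $\intt\mathrm{dist}(Q_\epsilon,\mm)^2\,dx\le C\epsilon^2\to0$. Since $Q_{\epsilon_j}\to Q$ in $L^2$ along the subsequence from \Cref{propQL2}, passing to the limit forces $\mathrm{dist}(Q,\mm)=0$ almost everywhere, i.e. $Q$ is $\mm$-valued.

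The main obstacle is the second, $O(1/\epsilon)$ scale: the dopant coupling $\frac{\rho_0}{\epsilon}(g(Q)-g(Q_0))$ remains singular and could a priori repel $Q$ from $\mm$. The crux is that it grows only linearly in $\mathrm{dist}(Q,\mm)$, whereas the leading bulk term grows quadratically at the faster rate $1/\epsilon^2$, so Young's inequality absorbs it at the cost of a harmless bounded constant. This requires the quadratic nondegeneracy of $\phi$ around the whole manifold $\mm$ (not merely at a single point) and the Lipschitz regularity of the partition-function potential $g$; establishing the former uniformly on $\mqc$, in particular near $\partial\mq$ where $\psi_s$ blows up, is the one point demanding genuine care.
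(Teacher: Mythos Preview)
Your argument is correct, but it takes a genuinely different route from the paper for the $\mm$-valued conclusion. The paper does not invoke quadratic nondegeneracy of $\phi$ near $\mm$, Lipschitz continuity of $g$, or any Young's inequality absorption. Instead, after reaching the same integral bound on the bulk terms, it simply uses the crude pointwise estimate
\[
\psi_s(\xi)-k_{DH}^0 Q\cdot\xi \;\geq\; \min_{\tilde\xi}\Big(\psi_s(\tilde\xi)-\sqrt{\tfrac{2}{3}}\,k_{DH}^0|\tilde\xi|\Big),
\]
which follows from $|Q|\leq\sqrt{2/3}$ on $\mqc$ and is a \emph{constant} independent of $Q$. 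This immediately gives $\intt\big(\phi(Q_\epsilon)-c_0\big)\,dx=O(\epsilon)$, and the conclusion then follows by Fatou's lemma along a pointwise a.e.\ convergent subsequence. The paper's route is more elementary and needs no structural information about $\phi$ beyond $\phi\geq c_0$; your route requires the transverse nondegeneracy of \Cref{propSingPotential}(7) (and its globalisation, which you correctly flag as needing care) but in return yields the sharper rate $\intt\mathrm{dist}(Q_\epsilon,\mm)^2\,dx=O(\epsilon^2)$ rather than $O(\epsilon)$. The ``Furthermore'' inequality is handled identically in both.
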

\begin{proof}
To show that $Q$ is $\mm$ valued almost everywhere it suffices to show that $\intt \psi_s(Q_\epsilon)-\frac{k_{HH}^0}{2}|Q_\epsilon|^2-c_0\,dx\to 0$, as we may take a pointwise a.e. converging subsequence and apply Fatou's lemma. To see this, first we estimate $c_\epsilon$. As the map $\xi\mapsto\psi_s(\xi)-k_{DH}^0\xi\cdot Q$ is strictly convex for fixed $Q$, we can readily describe the unique minimiser by the critical point condition, $\xi=\Lambda^{-1}(k_{DH}^0Q)$. This gives the minimum value as 
\begin{equation}
\min\limits_{\xi}\left(\psi_s(\xi)-k_{DH}^0\xi\cdot Q\right)=-\ln\int_{\mathbb{S}^2}\exp(k_{DH}^0Qp\cdot p)\,dp.
\end{equation} Then if $Q_0$ is a minimiser of $\psi_s(Q)-\frac{k^0_{HH}}{2}|Q|^2$,
\begin{equation}
\begin{split}
c_\epsilon\epsilon^2 =&   \min\limits_{Q,\xi} \psi_s(Q)-\frac{k_{HH}^0}{2}|Q|^2+\rho_0\epsilon(\psi_s(\xi)-k_{DH}^0\xi\cdot Q)\\
=& \min\limits_{Q} \psi_s(Q)-\frac{k_{HH}^0}{2}|Q|^2-\rho_0\epsilon\ln\intt\exp(k_{DH}^0Qp\cdot p)\,dp\\
\leq & \left(\psi_s(Q_0)-\frac{k_{HH}^0}{2}|Q_0|^2\right)-\rho_0\epsilon\ln\intt\exp(k_{DH}^0Q_0p\cdot p)\,dp\\
=& c_0 + C\epsilon.
\end{split}
\end{equation}
Now we turn to the energy estimate, which immediately implies
\begin{equation}
\mf_\epsilon(Q_\epsilon,\xi_\epsilon)\geq \intt \frac{1}{\epsilon^2}\left(\psi_s(Q_\epsilon)-\frac{k_{HH}^0}{2}|Q_\epsilon|^2\right)+\frac{\rho_0}{\epsilon}\left(\psi_s(\xi_\epsilon)-k_{DH}^0Q_\epsilon\cdot\xi_\epsilon\right)-c_\epsilon -C
\end{equation}
This then implies that for some $M>0$
\begin{equation}
\begin{split}
M\epsilon^2\geq &\intt \psi_s(Q_\epsilon)-\frac{k_{HH}^0}{2}|Q_\epsilon|^2+\rho_0\epsilon\left(\psi_s(\xi_\epsilon)-k_{DH}^0Q_\epsilon\cdot\xi_\epsilon\right)-\epsilon^2c_\epsilon \,dx\\
\geq & \intt \psi_s(Q_\epsilon)-\frac{k_{HH}^0}{2}|Q_\epsilon|^2+\rho_0\epsilon\min\limits_{\xi}\left(\psi_s(\xi)-\sqrt{\frac{2}{3}}k_{DH}^0|\xi|\right)-c_0-C\epsilon \,dx\\
\Rightarrow O(\epsilon)=& \intt \psi_s(Q_\epsilon)-\frac{k_{HH}^0}{2}|Q_\epsilon|^2-c_0 \,dx.\\
\end{split}
\end{equation}
Employing Fatou's lemma, this gives for any subsequence with $Q_{\epsilon_j}\overset{L^2}{\to} Q$ for some $Q$, that we may take a further subsequence $Q_k=Q_{\epsilon_{j_k}}$ converging pointwise, and 
\begin{equation}
\begin{split}
0\geq &\liminf\limits_{k\to\infty}\intt \psi_s(Q_k)-\frac{k_{HH}^0}{2}|Q_k|^2-c_0 \,dx\\
\geq & \intt \psi_s(Q)-\frac{k_{HH}^0}{2}|Q|^2-c_0 \,dx\geq 0,
\end{split}
\end{equation}
thus as the integrand $\psi_s(Q)-\frac{k^0_{HH}}{2}|Q|^2-c_0$ is non-negative and integrates to zero, it must equal zero almost everywhere, hence $Q\in \mm$ almost everywhere.

\end{proof}

\begin{theorem}\label{theoremCompactness}
Let $Q_\epsilon,\xi_\epsilon\in L^\infty(\mt^3,\mqc)$. If $\mf_\epsilon (Q_\epsilon,\xi_\epsilon)$ is uniformly bounded, there exists some $Q_0\in W^{1,2}(\mt^3,\mm)$ and a subsequence $Q_j=Q_{\epsilon_j},\xi_j=\xi_{\epsilon_j}$ so that $Q_j \to Q_0$ in $L^2$, and $\xi_j \to \frac{s_c}{s_0}Q_0$ in $L^2$. The constant $s_c$ satisfies \begin{equation}
s_c=\frac{3}{2}\sigma(n)\cdot\Lambda^{-1}(k_{HD}^0s_0\sigma(n)),
\end{equation}
where $n\in\mathbb{S}^2$ is arbitrary.
\end{theorem}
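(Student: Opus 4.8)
The plan is to separate the two order parameters according to their distinct energy scales. The host is already handled by the preliminary results: \Cref{propQL2} gives a subsequence $\epsilon_j\to 0$ and $Q_0\in W^{1,2}(\mt^3,\mqc)$ with $Q_{\epsilon_j}\to Q_0$ in $L^2$, and the preceding proposition shows $Q_0$ is $\mm$-valued almost everywhere, so $Q_0\in W^{1,2}(\mt^3,\mm)$ and $Q_0=s_0\sigma(n)$ for a measurable $n:\mt^3\to\mathbb{S}^2$. After passing to a further subsequence I may assume $Q_{\epsilon_j}\to Q_0$ pointwise almost everywhere. This settles the host convergence; all the remaining work concerns the dopant $\xi$.

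Set $\Phi_\epsilon(Q,\xi)=\frac{1}{\epsilon^2}\big(\psi_s(Q)-\frac{k_{HH}^0}{2}|Q|^2\big)+\frac{\rho_0}{\epsilon}\big(\psi_s(\xi)-k_{HD}^0Q\cdot\xi\big)$, so that $c_\epsilon=\min_{Q,\xi}\Phi_\epsilon$ by \eqref{eqCEpsilon}. Using \eqref{eqCoercivityEstimate} to bound the nonlocal terms below by a constant, and noting the dopant-dopant bulk $-\frac{\rho_0^2k_{DD}^0}{2}|\xi|^2$ is bounded since $\|\xi\|_\infty<\frac23$, the uniform energy bound gives $\int_{\mt^3}\big(\Phi_\epsilon(Q_\epsilon,\xi_\epsilon)-c_\epsilon\big)\,dx\leq M$ for some $M$. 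The crucial step is to minimise out the dopant pointwise. For fixed $Q$ the map $\xi\mapsto\psi_s(\xi)-k_{HD}^0Q\cdot\xi$ is strongly convex by \Cref{propSingPotential}, with unique minimiser $\bar\xi(Q)=\Lambda^{-1}(k_{HD}^0Q)$ (critical point $\Lambda(\xi)=k_{HD}^0Q$); writing $\bar\xi_\epsilon=\Lambda^{-1}(k_{HD}^0Q_\epsilon)$ and $\mu>0$ for the convexity modulus, strong convexity yields the pointwise bound
\begin{equation*}
\psi_s(\xi_\epsilon)-k_{HD}^0Q_\epsilon\cdot\xi_\epsilon\geq\min_\xi\big(\psi_s(\xi)-k_{HD}^0Q_\epsilon\cdot\xi\big)+\frac{\mu}{2}|\xi_\epsilon-\bar\xi_\epsilon|^2.
\end{equation*}
Since the first two terms of the resulting lower bound coincide with the $\xi$-reduced bulk $m_\epsilon(Q_\epsilon):=\frac{1}{\epsilon^2}\big(\psi_s(Q_\epsilon)-\frac{k_{HH}^0}{2}|Q_\epsilon|^2\big)+\frac{\rho_0}{\epsilon}\min_\xi\big(\psi_s(\xi)-k_{HD}^0Q_\epsilon\cdot\xi\big)$, and $m_\epsilon(Q_\epsilon)\geq\min_Q m_\epsilon=c_\epsilon$, the dominant host contribution drops out, leaving $\Phi_\epsilon(Q_\epsilon,\xi_\epsilon)-c_\epsilon\geq\frac{\rho_0\mu}{2\epsilon}|\xi_\epsilon-\bar\xi_\epsilon|^2$ almost everywhere. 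Integrating and using $M$ gives $\int_{\mt^3}|\xi_\epsilon-\bar\xi_\epsilon|^2\,dx\leq\frac{2M\epsilon}{\rho_0\mu}\to 0$, i.e. $\xi_{\epsilon_j}-\bar\xi_{\epsilon_j}\to 0$ in $L^2$.

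It then remains to identify $\lim\bar\xi_{\epsilon_j}$. By \Cref{propSingPotential} and \eqref{eqLambdaMinus1} the map $\Lambda^{-1}$ is smooth, hence Lipschitz on bounded sets; since $k_{HD}^0Q_\epsilon$ is uniformly bounded (as $\|Q_\epsilon\|_\infty<\frac23$), the convergence $Q_{\epsilon_j}\to Q_0$ in $L^2$ forces $\bar\xi_{\epsilon_j}\to\Lambda^{-1}(k_{HD}^0Q_0)$ in $L^2$. Because $Q_0=s_0\sigma(n)$ is uniaxial with axis $n$ and $\Lambda^{-1}$ is frame indifferent (\Cref{propSingPotential}), the matrix $\Lambda^{-1}(k_{HD}^0s_0\sigma(n))$ must be uniaxial with the same axis $n$ and a scalar order parameter independent of $n$; hence $\Lambda^{-1}(k_{HD}^0Q_0)=s_c\sigma(n)=\frac{s_c}{s_0}Q_0$, where contracting with $\sigma(n)$ and using $|\sigma(n)|^2=\frac23$ yields $s_c=\frac32\sigma(n)\cdot\Lambda^{-1}(k_{HD}^0s_0\sigma(n))$. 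Combining the two convergences, $\xi_{\epsilon_j}=(\xi_{\epsilon_j}-\bar\xi_{\epsilon_j})+\bar\xi_{\epsilon_j}\to\frac{s_c}{s_0}Q_0$ in $L^2$, as claimed.

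The main obstacle is the mismatch of scales: the host bulk is penalised at order $\epsilon^{-2}$ while the dopant bulk is penalised only at order $\epsilon^{-1}$, so minimising jointly over $(Q,\xi)$ would not isolate a sharp dopant estimate. Minimising out $\xi$ first is the device that resolves this, discarding the large nonnegative host excess $m_\epsilon(Q_\epsilon)-c_\epsilon$ and exposing precisely the term $\epsilon^{-1}|\xi_\epsilon-\bar\xi_\epsilon|^2$, whose $\epsilon^{-1}$ weight upgrades a bounded bulk integral into $L^2$-smallness. The one genuine analytic point to verify is that the strong-convexity modulus $\mu$ of $\psi_s$ is uniform up to $\partial\mq$; this holds because $\nabla^2\psi_s$ is the inverse of the covariance operator of $\sigma$ under $f^Q$, which is bounded above uniformly (the observable $\sigma$ is bounded) and degenerates only as $Q\to\partial\mq$, where $\psi_s$ itself blows up and the inequality is vacuous.
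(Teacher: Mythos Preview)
Your proof is correct and takes a genuinely different route from the paper's.

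The paper first extracts a weak-$*$ limit of $\xi_{\epsilon_j}$ in $L^\infty$, then uses the bound
\[
\intt\rho_0\big(\psi_s(\xi_j)-k_{DH}^0Q_0\cdot\xi_j\big)-\rho_0\min_{\tilde\xi}\big(\psi_s(\tilde\xi)-k_{DH}^0Q_0\cdot\tilde\xi\big)\,dx\to 0
\]
together with convexity and lower semicontinuity to identify the weak limit as $\Lambda^{-1}(k_{HD}^0Q_0)$. Strong $L^2$ convergence is then obtained by a near/far decomposition: on $\{|\xi_j-\xi|>\delta\}$ the energy gap is bounded below by a positive constant, so this set has vanishing measure; on the complement the local quadratic growth of the dopant bulk near its minimum gives the $L^2$ smallness.

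Your argument bypasses the weak-$*$ step entirely. By invoking the \emph{global} strong convexity of $\psi_s$ on $\mq$ (with modulus $\mu$ uniform because $\nabla^2\psi_s$ is the inverse covariance of the bounded observable $\sigma$, so $\nabla^2\psi_s\geq\frac32 I$), you obtain directly the quantitative estimate $\|\xi_\epsilon-\Lambda^{-1}(k_{HD}^0Q_\epsilon)\|_2^2=O(\epsilon)$, and then transfer the $L^2$ convergence of $Q_{\epsilon_j}$ to $\bar\xi_{\epsilon_j}$ via the Lipschitz continuity of $\Lambda^{-1}$ on bounded sets. This is shorter, yields an explicit rate, and avoids the weak-to-strong upgrade machinery. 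The paper's approach, by contrast, uses only the local positive-definiteness of the Hessian at the minimiser and is therefore somewhat more robust to settings where a uniform global convexity modulus is not available; here, however, your sharper input about $\psi_s$ is entirely legitimate and makes the argument cleaner.
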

\begin{proof}
First we will show that there exists a subsequence $\xi_{\epsilon_j}\overset{*}{\rightharpoonup} \xi=\frac{s_c}{s_0}Q$ in $L^\infty$, then this will be used to prove estimates to show strong $L^2$ convergence.

Similarly to before,  we have the energy estimate which tells us 
\begin{equation}
\begin{split}
M\epsilon\geq &\intt \frac{1}{\epsilon}\left(\psi_s(Q_\epsilon)-\frac{k_{HH}^0}{2}|Q_\epsilon|^2\right)+\rho_0\left(\psi_s(\xi_\epsilon)-k_{DH}^0Q_\epsilon\cdot\xi_\epsilon\right)-\epsilon c_\epsilon \,dx\\
\geq & \intt\rho_0\left(\psi_s(\xi_\epsilon)-k_{DH}^0Q_\epsilon\cdot\xi_\epsilon\right)+\frac{c_0}{\epsilon}-\epsilon c_\epsilon \,dx\\
=& \intt\rho_0\left(\psi_s(\xi_\epsilon)-k_{DH}^0Q_0\cdot\xi_\epsilon\right)+k_{DH}^0\rho_0\xi_\e\cdot(Q_0-Q_\epsilon)+\frac{c_0-\epsilon^2c_\epsilon}{\epsilon} \,dx\\
\geq & \intt\rho_0\left(\psi_s(\xi)-k_{DH}^0Q_0\cdot\xi_\epsilon\right)-\rho_0\min\limits_{\tilde\xi}\left(\psi_s(\tilde\xi_\epsilon)-k_{DH}^0Q_0\cdot\tilde\xi_\epsilon\right)+k_{DH}^0\rho_0\xi_\epsilon\cdot(Q_0-Q_\epsilon)\,dx,\\
\end{split}
\end{equation}
where $Q_0\in L^\infty(\mt^3,\overline{\mathcal{Q}})$ is arbitrary. 
Now we take a subsequence so that $\epsilon_j\to 0$, $\xi_{\epsilon_j}=\xi_j \overset{*}{\rightharpoonup}\xi$ in $L^\infty$, and $Q_{\epsilon_j}=Q_j\to Q$ in $L^2$, which is permitted as $||\xi||_\infty<\frac{2}{3}$ and \Cref{propQL2}.  Furthermore, take $Q_0=Q$. Then 
\begin{equation}\begin{split}
0\leq&\intt\rho_0\left(\psi_s(\xi_j)-Q\cdot\xi_j\right)-\min\limits_{\tilde\xi}\left(\psi_s(\tilde\xi)-k_{DH}^0Q\cdot\tilde\xi\right)\,dx\\
\leq & M\epsilon-\intt \rho_0 k_{DH}^0\xi_\epsilon\cdot(Q-Q_\epsilon)\\
\leq &M\epsilon + \rho_0k_{DH}^0||\xi_\epsilon||_2||Q-Q_\epsilon||_2\to 0
\end{split}
\end{equation}

Now, as $\psi_s$ is strictly convex, this yields 
\begin{equation}\begin{split}
0=&\liminf\limits_{j\to\infty} \intt\rho_0\left(\psi_s(\xi_j)-Q\cdot\xi_j\right)-\rho_0\min\limits_{\tilde\xi}\left(\psi_s(\tilde\xi)-k_{DH}^0Q\cdot\tilde\xi\right)\,dx\\
&\geq \intt\rho_0\left(\psi_s(\xi)-Q\cdot\xi\right)-\rho_0\min\limits_{\tilde\xi}\left(\psi_s(\tilde\xi)-k_{DH}^0Q\cdot\tilde\xi\right)\,dx
\end{split}
\end{equation}
This means that $\xi$ must be pointwise in the minimum energy wells of $\tilde{\xi}\mapsto \psi_s(\tilde{\xi})-Q\cdot \tilde{\xi}$. This is a strictly convex function so its minimising set must be a single point, and by taking the derivative we see that almost everywhere
\begin{equation}\label{eqXiCritical}
\xi=\Lambda^{-1}(k_{DH}^0Q(x)).
\end{equation} More so, as $Q$ is $\mm$ valued almost everywhere, this means that we can write $Q=s_0\left(n\otimes n-\frac{1}{3}I\right)$. Since $\Lambda^{-1}$ is frame invariant, this implies $\xi$ is also uniaxial with director $n$. This implies $\xi=s_c\sigma(n)$ for some $s_c$, which can readily be found by taking the inner product of \eqref{eqXiCritical} against $\frac{3}{2}\sigma(n)$, giving 
\begin{equation}
s_c=\frac{3}{2}\sigma(n)\cdot (s_c\sigma(n))=\frac{3}{2}\sigma(n)\cdot \Lambda^{-1}(k_{DH}^0s_0\sigma(n)).
\end{equation} Alternatively, we can write $\xi = \frac{s_c}{s_0}Q$.

To show that $\xi$ converges in $L^2$, first we note that as the second derivative of the bulk energy $\left(\psi_s(\xi)-Q\cdot\xi\right)-\min\limits_{\tilde\xi}\left(\psi_s(\tilde\xi)-k_{DH}^0Q\cdot\tilde\xi\right)$ is positive definite, and independent of $Q$, this implies that there is some $\delta>0$ so that if $|\tilde{\xi}-\xi(x)|<\delta$, then $\left(\psi_s(\xi)-Q\cdot\xi\right)-\min\limits_{\tilde\xi}\left(\psi_s(\tilde\xi)-k_{DH}^0Q\cdot\tilde\xi\right)\geq C|\tilde{\xi}-\xi(x)|^2$. The constant $C$ can be taken uniformly in $Q\in\mm$. So we can estimate
\begin{equation}
\begin{split}
&||\xi_j-\xi||_2^2\\
=& \int_{|\xi_j-\xi|>\delta} |\xi_j(x)-\xi(x)|^2\,dx + \int_{|\xi_j-\xi|\leq \delta} |\xi_j(x)-\xi(x)|^2\,dx .
\end{split}
\end{equation}
To estimate the first term, it suffices to show that the set where $|\xi_j-\xi|>\delta$ is of vanishing measure as $j\to\infty$, as $\xi,\xi_j$ are uniformly bounded. However, the uniform estimate on the bulk energy means that there is some constant $C_1>0$ with 
\begin{equation}
\min\limits_{|\hat\xi-\xi(x)|>\delta}\left(\psi_s(\hat\xi)-Q\cdot\hat\xi\right)-\min\limits_{\tilde\xi}\left(\psi_s(\tilde\xi)-k_{DH}^0Q\cdot\tilde\xi\right)>C_1,
\end{equation}
so this follows from the energy estimate. 

For the second term, we estimate this as 
\begin{equation}
\begin{split}
&\int_{|\xi_j-\xi|\leq \delta} |\xi_j(x)-\xi(x)|^2\,dx \\
\leq & \frac{1}{C}\int_{|\xi_j-\xi|\leq \delta}\left(\psi_s(\xi)-Q\cdot\xi\right)-\min\limits_{\tilde\xi}\left(\psi_s(\tilde\xi)-k_{DH}^0Q\cdot\tilde\xi\right)\,dx\to 0 
\end{split}
\end{equation}
Combining these gives that $||\xi_j-\xi||_2\to 0$. 
\end{proof}

\subsection{Liminf inequality}\label{subsecLiminf}

Within this section we will proceed to show the necessary liminf inequality for our $\Gamma$-convergence result. The necessary ingredients are to show that given an energy estimate, the Host-Host interaction term has the asymptotic lower bound of the quadratic energy in $\nabla Q$, the chiral Host-Dopant term converges to the correct term linear in $\nabla Q$, and all other interaction terms converge to zero, which are strongly aided by our compactness results.

\begin{proposition}\label{propDHToZero}
Let $Q_\e,\xi_\e\in L^\infty(\mt^3,\mqc)$. Assume that $\mf_\epsilon(Q_\epsilon,\xi_\epsilon)$ is uniformly bounded. Then, we may take a subsequence $\epsilon_j\to 0$ so that 
\begin{equation}
\frac{1}{\e_j}\intt\intt K_{DH}^{\epsilon_j} (x-y)\big(Q_{j}(x)-Q_j(y)\big)\cdot(\xi_j(x)-\xi_j(y)\big)\,dy\,dx\to 0.
\end{equation}
\end{proposition}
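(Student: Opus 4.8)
The plan is to upgrade a merely bounded quantity into a vanishing one by exploiting the strong $L^2$ convergence supplied by \Cref{theoremCompactness}. First I would fix the subsequence $\epsilon_j\to 0$ along which $Q_j\to Q_0$ and $\xi_j\to\frac{s_c}{s_0}Q_0$ in $L^2$, so that the remainder $r_j:=\xi_j-\frac{s_c}{s_0}Q_j$ satisfies $r_j\to 0$ in $L^2$. From the uniform energy bound and the coercivity estimate \eqref{eqCoercivityEstimate} (exactly as in the proof of \Cref{propQL2}) I record the two a priori bounds
\[
\frac{1}{\epsilon^2}\intt\intt g^\epsilon(x-y)|Q_\epsilon(x)-Q_\epsilon(y)|^2\,dy\,dx\leq M,\qquad \intt\intt g^\epsilon(x-y)|\xi_\epsilon(x)-\xi_\epsilon(y)|^2\,dy\,dx\leq M.
\]
The first says the host differences are of size $\epsilon$ in the natural nonlocal norm, while the second only controls the dopant differences at order one; consequently a direct Cauchy--Schwarz on the target term gives $\frac{1}{\epsilon}(M\epsilon^2)^{1/2}M^{1/2}=O(1)$, i.e.\ only boundedness. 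This mismatch of scales is the core difficulty.

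The key step is to split the dopant difference using the remainder. Writing $\xi_j(x)-\xi_j(y)=\frac{s_c}{s_0}\big(Q_j(x)-Q_j(y)\big)+\big(r_j(x)-r_j(y)\big)$ and substituting, the target term decomposes as
\[
\frac{s_c}{s_0}\cdot\frac{1}{\epsilon_j}\intt\intt K_{DH}^{\epsilon_j}(x-y)\big(Q_j(x)-Q_j(y)\big)^{\otimes 2}\,dy\,dx+\frac{1}{\epsilon_j}\intt\intt K_{DH}^{\epsilon_j}(x-y)\big(Q_j(x)-Q_j(y)\big)\cdot\big(r_j(x)-r_j(y)\big)\,dy\,dx.
\]
For the first (diagonal) piece I would factor out one power of $\epsilon_j$ and bound the remaining quadratic form by $C_2 g^{\epsilon_j}$, so it is at most $\frac{s_c}{s_0}\,\epsilon_j\,C_2 M\to 0$. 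For the second (remainder) piece I would apply Cauchy--Schwarz in the measure $g^{\epsilon_j}(x-y)\,dy\,dx$, using the operator bound $|K_{DH}(z)A\cdot B|\leq C_2 g(z)|A||B|$ from \eqref{eqEstimate1}: the $Q$-factor contributes $(M\epsilon_j^2)^{1/2}$, cancelling the prefactor $1/\epsilon_j$, while the $r$-factor is handled by the elementary estimate $\intt\intt g^{\epsilon}(x-y)|r(x)-r(y)|^2\,dy\,dx\leq 4\|g\|_{L^1}\|r\|_{L^2}^2$, obtained from $|r(x)-r(y)|^2\leq 2|r(x)|^2+2|r(y)|^2$ together with $\intt g^{\epsilon}(x-y)\,dy=\int_{\mathbb{R}^3}g=\|g\|_{L^1}$. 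This leaves a bound of the shape $2C_2\sqrt{M\|g\|_{L^1}}\,\|r_j\|_{L^2}$, which vanishes since $r_j\to 0$ in $L^2$.

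The main obstacle is precisely that the dopant differences do not carry the extra factor of $\epsilon$ that the host differences do, so the mixed term is only a priori bounded rather than small; the resolution is the decomposition against $r_j=\xi_j-\frac{s_c}{s_0}Q_j$, which isolates a genuinely $O(\epsilon_j)$ quadratic term in $Q$ alone and a remainder made small by the strong $L^2$ convergence of \Cref{theoremCompactness}. The remaining checks are routine: that $K_{DH}(z)$ is self-adjoint and positive as an operator on $\mathrm{Sym}_0(3)$, so that \eqref{eqEstimate1} yields $|K_{DH}(z)A\cdot B|\leq C_2 g(z)|A||B|$ by the Cauchy--Schwarz inequality for its semidefinite quadratic form, and that the periodisation in the definition of $g^\epsilon$ indeed integrates to $\int_{\mathbb{R}^3}g$.
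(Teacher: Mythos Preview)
Your proposal is correct and follows essentially the same route as the paper: pass to the subsequence from \Cref{theoremCompactness}, split $\xi_j(x)-\xi_j(y)$ into $\frac{s_c}{s_0}\big(Q_j(x)-Q_j(y)\big)$ plus the remainder $r_j(x)-r_j(y)$, kill the diagonal $Q$-term by factoring out $\epsilon_j$ against the energy bound, and kill the remainder term by Cauchy--Schwarz together with $\|r_j\|_{L^2}\to 0$. The paper's only cosmetic difference is that it keeps the two endpoint contributions of $r_j$ separate before Cauchy--Schwarz, whereas you bound $|r_j(x)-r_j(y)|^2\le 2|r_j(x)|^2+2|r_j(y)|^2$; both yield the same estimate.
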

\begin{proof}
First, we take a subsequence so that $Q_j\to Q\in L^2$, $\xi_j \to \xi=\frac{s_c}{s_0}Q$ in $L^2$. Then estimate 
\begin{equation}
\begin{split}
&\frac{1}{\e_j}\intt\intt K_{DH}^{\epsilon_j} (x-y)\big(Q_{j}(x)-Q_j(y)\big)\cdot(\xi_j(x)-\xi_j(y)\big)\,dy\,dx\\
\leq & \frac{C}{\e_j}\intt\intt g^{\epsilon_j} (x-y)\big|Q_{j}(x)-Q_j(y)\big|\cdot|\xi_j(x)-\xi_j(y)\big|\,dy\,dx\\
= & \frac{C}{\e_j}\intt\intt g^{\epsilon_j} (x-y)\big|Q_{j}(x)-Q_j(y)\big|\cdot\left|\xi_j(x)-\frac{s_c}{s_0}Q_j(x)+\frac{s_c}{s_0}Q_j(x)-\frac{s_c}{s_0}Q_j(y)+\frac{s_c}{s_0}Q_j(y)-\xi_j(y)\right|\,dy\,dx\\
\leq & \frac{C}{\e_j}\intt\intt g^{\epsilon_j} (x-y)\big|Q_{j}(x)-Q_j(y)\big|\left(\frac{s_0}{s_c}\big|Q_j(x)-Q_j(y)\big|+\left|\frac{s_c}{s_0}Q_j(x)-\xi_j(x)\right|+\left|\frac{s_c}{s_0}Q_j(y)-\xi_j(y)\right|\right)\,dy\,dx\\
=&  \frac{C}{\e_j}\intt \intt g^{\epsilon_j} (x-y)\big|Q_{j}(x)-Q_j(y)\big|\left(\frac{s_0}{s_c}\big|Q_j(x)-Q_j(y)\big|+2\left|\frac{s_c}{s_0}Q_j(x)-\xi_j(x)\right|\right)\,dy\,dx.\\
\end{split}
\end{equation} 
we estimate the two summands in turn. For the left-hand term,
\begin{equation}
\begin{split}
 &\frac{C}{\e_j}\intt\intt g^{\epsilon_j} (x-y)\big|Q_{j}(x)-Q_j(y)\big|\frac{s_0}{s_c}\big|Q_j(x)-Q_j(y)\big|\,dy\,dx\\
 =& \epsilon_j \frac{Cs_c}{s_0\e_j^2}\intt\intt g^{\epsilon_j} (x-y)\big|Q_{j}(x)-Q_j(y)\big|^2\,dy\,dx\\
 \leq & C\epsilon_j\left(\mf_{\e_j}(Q_j,\xi_j)+1\right)\to 0
\end{split}
\end{equation}
from the energy bound. For the remaining term, 
\begin{equation}
\begin{split}
& \frac{2C}{\e_j}\intt\intt g^{\epsilon_j} (x-y)\big|Q_{j}(x)-Q_j(y)\big|\left|\frac{s_c}{s_0}Q_j(x)-\xi_j(x)\right|\,dy\,dx\\
\leq & 2C\left(\frac{1}{\e_j^2}\intt\intt g^{\epsilon_j} (x-y)\big|Q_{j}(x)-Q_j(y)\big|^2\,dx\,dy\right)^\frac{1}{2}\left(\intt\intt g^{\e_j}(x-y)\left|\frac{s_c}{s_0}Q_j(x)-\xi_j(x)\right|^2\,dy\,dx\right)^\frac{1}{2}\\
\leq & C\left(\mf_{\e_j}(Q_j,\xi_j)+1\right)\left(\intt\intt g^{\e_j}(x-y)\,dy\left|\frac{s_c}{s_0}Q_j(x)-\xi_j(x)\right|^2\,dx\right)^\frac{1}{2}\\
= & C\left(\mf_{\e_j}(Q_j,\xi_j)+1\right)^\frac{1}{2}||g^{\e_j}||_1^\frac{1}{2}\left|\left|\frac{s_c}{s_0}Q_j-\xi_j\right|\right|_2.
\end{split}
\end{equation}
As the energy is bounded, $g^{\e_j}$ is $L^1$-bounded, and $\xi_j,\frac{s_c}{s_0}Q_j$ both converge to $\xi$ in $L^2$, this term also vanishes as $\epsilon\to 0$, giving the required result. 
\end{proof}

\begin{proposition}\label{propCDTo0}
Let $Q_\e,\xi_\e\in L^\infty(\mt^3,\mqc)$. Assume that $\mf_\e(Q_\e,\xi_\e)$ is uniformly bounded. Then 
\begin{equation}
\intt\intt K_{cD}^\e(x-y)\xi(x)\cdot (\xi(x)-\xi(y))\,dy\,dx\to 0.
\end{equation}
\end{proposition}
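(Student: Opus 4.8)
The plan is to control the integrand by its symmetric majorant and then exploit the strong $L^2$ convergence furnished by \Cref{theoremCompactness}; throughout I read the integral with $\xi_\e$ (the $\e$-subscripts being understood) and call it $I_\e$. Using the pointwise bound $|K_{cD}(z)|\le Cg(z)$ from \eqref{eqEstimate2}, hence $|K_{cD}^\e(x-y)|\le Cg^\e(x-y)$, together with the uniform constraint $\|\xi_\e\|_\infty<\tfrac23$, I would first reduce to a purely scalar estimate,
\begin{equation}
|I_\e|\le C\intt\intt g^\e(x-y)\,|\xi_\e(x)|\,|\xi_\e(x)-\xi_\e(y)|\,dy\,dx\le C'\intt\intt g^\e(x-y)\,|\xi_\e(x)-\xi_\e(y)|\,dy\,dx,
\end{equation}
so the task becomes showing that this first-difference quantity vanishes. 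By \Cref{theoremCompactness} I pass to a (not relabelled) subsequence along which $\xi_\e\to\xi=\tfrac{s_c}{s_0}Q$ in $L^2$ with $Q\in W^{1,2}(\mt^3,\mm)$, so in particular $\xi\in W^{1,2}(\mt^3,\mqc)$.

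The apparent obstacle is that the energy bound only delivers $\intt\intt g^\e(x-y)|\xi_\e(x)-\xi_\e(y)|^2\,dy\,dx\le C$ through the coercivity estimate \eqref{eqCoercivityEstimate} --- crucially without a factor $\e^{-2}$, since the Dopant--Dopant difference enters the energy one order too low --- so a direct Cauchy--Schwarz against the $O(1)$ mass $\intt\intt g^\e(x-y)\,dy\,dx=(2\pi)^3\|g\|_1$ yields only boundedness, not decay. To extract decay I would split $\xi_\e=\xi+r_\e$ with $r_\e:=\xi_\e-\xi\to0$ in $L^2$, and bound the difference by the triangle inequality,
\begin{equation}
\intt\intt g^\e(x-y)\,|\xi_\e(x)-\xi_\e(y)|\,dy\,dx\le \intt\intt g^\e(x-y)\,|\xi(x)-\xi(y)|\,dy\,dx+\intt\intt g^\e(x-y)\,|r_\e(x)-r_\e(y)|\,dy\,dx,
\end{equation}
then treat each summand by Cauchy--Schwarz against the $O(1)$ mass of $g^\e$.

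For the limit $\xi$, the change of variables $y=x-\e z$ (via the duality defining $u^\e$) gives $\intt\intt g^\e(x-y)|\xi(x)-\xi(y)|^2\,dy\,dx=\int_{\mathbb{R}^3}g(z)\intt|\xi(x)-\xi(x-\e z)|^2\,dx\,dz\le \e^2\|\nabla\xi\|_2^2\int_{\mathbb{R}^3}|z|^2g(z)\,dz$, using the Sobolev shift estimate and $\int|z|^2g<\infty$ from \Cref{assumptionInteraction}; hence this term is $O(\e)$ after Cauchy--Schwarz. For the remainder, the crude bound $\intt\intt g^\e(x-y)|r_\e(x)-r_\e(y)|^2\,dy\,dx\le 4\|g\|_1\|r_\e\|_2^2\to0$ (expand the square and use $\intt g^\e(x-y)\,dy=\|g\|_1$) shows this term is $o(1)$. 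Combining, $I_\e\to0$ along the subsequence; since every subsequence admits a further subsequence on which this holds, the full limit is $0$. The main work is precisely the recognition in the second paragraph that the energy bound alone is insufficient, forcing one to invoke the strong $L^2$ convergence together with the $W^{1,2}$-regularity of the limit, and thereby to separate the gradient-scale contribution of $\xi$ (decaying like $\e$) from the amplitude contribution of $r_\e$ (decaying because $r_\e\to0$ in $L^2$).
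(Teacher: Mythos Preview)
Your proof is correct, and it takes a genuinely different route from the paper's. The paper argues by inserting a factor of $\e$ and $\e^{-1}$, applying Cauchy--Schwarz, and then bounding $\e^{-2}\intt\intt g^\e(x-y)|\xi_\e(x)-\xi_\e(y)|^2\,dy\,dx$ directly by $C(\mf_\e(Q_\e,\xi_\e)+1)$ to conclude the whole quantity is $O(\e)$. As you correctly diagnose in your second paragraph, however, the coercivity estimate \eqref{eqCoercivityEstimate} only controls $\intt\intt g^\e(x-y)|\xi_\e(x)-\xi_\e(y)|^2\,dy\,dx$ at order one, \emph{without} the $\e^{-2}$, since the Dopant--Dopant quadratic term enters the energy with a prefactor $\rho_0^2\e^2/(4\e^2)=\rho_0^2/4$. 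So the paper's final inequality appears to overreach by one power of $\e$, and a bare Cauchy--Schwarz indeed yields only boundedness.

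Your remedy---invoking \Cref{theoremCompactness} to obtain $\xi_\e\to\xi=\tfrac{s_c}{s_0}Q$ in $L^2$ with $\xi\in W^{1,2}$, then splitting $\xi_\e=\xi+r_\e$ and treating the two pieces separately (translation estimate for $\xi$, crude $L^2$ bound for $r_\e$)---is exactly the right repair. It costs more: you must appeal to the full compactness theorem, whereas the paper's intended argument (if valid) would have been self-contained. What it buys is a rigorous proof using only the estimates actually available. The subsequence-to-full-sequence step at the end is sound; note also that your $\xi$-term estimate $\intt\intt g^\e(x-y)|\xi(x)-\xi(y)|^2\,dy\,dx\le \e^2\|\nabla\xi\|_2^2\int|z|^2g(z)\,dz$ holds uniformly, so only the $r_\e$-term genuinely requires passing to subsequences.
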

\begin{proof}
We estimate 
\begin{equation}
\begin{split}
&\left|\intt\intt K_{cD}^\e(x-y)\xi(x)\cdot (\xi(x)-\xi(y))\,dy\,dx\right|\\
\leq & C\epsilon\intt\intt \frac{1}{\epsilon}g^\e(x-y)|\xi_\e(x)-\xi_\e(y)|\,dx\,dy\\
\leq & C\epsilon\left(\intt\intt \frac{1}{\epsilon^2}g^\e(x-y)|\xi_\e(x)-\xi_\e(y)|^2\,dx\,dy\right)^\frac{1}{2}\left(\intt\intt g^\e(x-y)\,dx\,dy\right)^\frac{1}{2}\\
= & C\epsilon||g^{\epsilon}||_1^\frac{1}{2}\left(\intt\intt \frac{1}{\epsilon^2}g^\e(x-y)|\xi_\e(x)-\xi_\e(y)|^2\,dx\,dy\right)^\frac{1}{2}\\
\leq & C\epsilon||g^{\epsilon}||_1^\frac{1}{2}(\mf_\e(Q_\e,\xi_\e)+1)^\frac{1}{2}=O(\epsilon)
\end{split}
\end{equation}
from the energy estimate
\end{proof}

Before proceeding with the next result we introduce some further notation for finite difference operators.
\begin{definition}
Let $u:\mathbb{R}^3\to V$ for a vector space $V$. Let $z\in\mathbb{R}^3$. Define the finite difference $D_zu:\mathbb{R}^3\to V$ by 
\begin{equation}
D_zu(x)=\frac{1}{|z|}\big(u(x+z)-u(x)\big).
\end{equation}
\end{definition}

\begin{proposition}\label{propChiralTermConverge}
Assume that $Q_\e,\xi_\e$ converge in $L^2$ to $Q,\xi\in W^{1,2}(\mathbb{T}^3,\mqc)$ with $\mf_\epsilon(Q_\epsilon,\xi_\epsilon)<M$. Then we have a subsequence $Q_j=Q_{\e_j}$,$\xi_j=\xi_{\e_j}$ so that 
\begin{equation}
\frac{1}{\epsilon}\intt \intt K_{cH}^\epsilon(x-y)\xi_j(x)\cdot (Q_j(x)-Q_j(y))\,dx\,dy\\
\to \intt\xi(x)\cdot  \left(\int_{\mathbb{R}^3}K_{cH}(z)\otimes z\,dz\right)\cdot \nabla Q(x)\,dx.
\end{equation}
\end{proposition}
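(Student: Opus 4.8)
The plan is to rewrite the chiral term as an integral over a finite-difference quotient and identify its limit as $\e\to 0$, controlling the difference between $\xi_j$ evaluated at $x$ and its gradient-paired form. First I would change variables in the inner integral, writing $y=x+\e z$ so that the kernel $K_{cH}^\e(x-y)$ becomes $K_{cH}(z)$ and the factor $\frac1\e$ is absorbed into a finite difference. Concretely, using the duality interpretation of $u^\e$ the term becomes
\begin{equation}
-\intt\int_{\mathbb{R}^3}K_{cH}(z)\xi_j(x)\cdot \frac{Q_j(x+\e z)-Q_j(x)}{\e}\,dz\,dx
=-\intt\int_{\mathbb{R}^3}|z|K_{cH}(z)\xi_j(x)\cdot D_{\e z}Q_j(x)\,dz\,dx,
\end{equation}
matching the heuristic in \Cref{sectionHeuristic}. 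The target limit is then $\intt\xi(x)\cdot\left(\int_{\mathbb{R}^3}K_{cH}(z)\otimes z\,dz\right)\cdot\nabla Q(x)\,dx$, obtained by replacing $D_{\e z}Q_j$ with $(\hat z\cdot\nabla)Q$ and $\xi_j$ with $\xi$.

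The key steps, in order, are as follows. I would split the error into two pieces: replacing $\xi_j$ by $\xi$, and replacing the finite difference $D_{\e z}Q_j$ by the directional derivative $(\hat z\cdot\nabla)Q$. For the first piece, since $\xi_j\to\xi$ in $L^2$ and the finite-difference quotients $\frac1\e\|Q_j(\cdot+\e z)-Q_j(\cdot)\|_2$ are uniformly bounded (this is exactly the coercivity bound \eqref{eqCoercivityEstimate}, giving $\frac{1}{\e^2}\intt\intt g^\e|Q_j(x)-Q_j(y)|^2<M$), a Cauchy--Schwarz estimate against $|z|K_{cH}(z)\leq C|z|g(z)$ and the integrability $\int|z|^2g(z)\,dz<\infty$ controls this term. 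For the second piece, I would use that $Q_j\to Q$ in $W^{1,2}$ along a suitable subsequence — this follows from \Cref{propCompactBilinear} combined with the convergence identities of \Cref{propLimsupNonlocal}, since the energy bound forces $Q\in W^{1,2}$ — so that the finite differences $D_{\e z}Q_j$ converge in $L^2$ to $(\hat z\cdot\nabla)Q$ for each fixed $z$, with the uniform $L^2$ bound above furnishing a dominating function in $z$ to justify passing the limit inside $\int_{\mathbb{R}^3}|z|K_{cH}(z)(\cdot)\,dz$ by dominated convergence.

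The main obstacle I anticipate is justifying the interchange of the $\e\to 0$ limit with the $z$-integral, because the convergence $D_{\e z}Q_j\to(\hat z\cdot\nabla)Q$ is only guaranteed to hold in $L^2_x$ for each fixed $z$ and need not be uniform in $z$; the finite-difference quotients are only uniformly bounded, not equicontinuous, in the $z$ variable. The clean way around this is to exploit the second moment bound $\int_{\mathbb{R}^3}|z|^2g(z)\,dz<\infty$ to produce an integrable-in-$z$ majorant, combined with the standard fact that for $Q\in W^{1,2}$ one has $\|D_{\e z}Q-(\hat z\cdot\nabla)Q\|_{L^2}\to 0$ as $\e\to 0$ for fixed $z$, so that Fubini together with dominated convergence in $z$ closes the argument. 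A mild subtlety is that one must pass to the subsequence along which both the $L^2$ convergence of $\xi_j$ and the $W^{1,2}$ convergence of $Q_j$ hold simultaneously, which is harmless since both follow from the same uniform energy bound.
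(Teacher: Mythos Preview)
Your proposal has a genuine gap in the second step. You claim that $Q_j\to Q$ in $W^{1,2}$ along a subsequence, citing \Cref{propCompactBilinear} and \Cref{propLimsupNonlocal}. Neither result gives this. \Cref{propCompactBilinear} only yields $L^2$ convergence $Q_j\to Q$ with the \emph{limit} $Q\in W^{1,2}$; the approximants $Q_j$ are merely in $L^\infty(\mt^3,\mqc)$ and need not possess weak derivatives at all. \Cref{propLimsupNonlocal} takes strong $W^{1,2}$ convergence as a \emph{hypothesis}, not a conclusion. Consequently your assertion that $D_{\e z}Q_j\to(\hat z\cdot\nabla)Q$ in $L^2_x$ for fixed $z$ is unsupported: this would require control on $\nabla Q_j$ that the energy bound simply does not provide. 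The uniform bound $\int g(z)|z|^2\|D_{\e z}Q_j\|_{L^2}^2\,dz<M$ that you correctly extract gives at best weak-type compactness of the finite differences, not strong convergence.

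The paper circumvents this by never asking for convergence of $D_{\e z}Q_j$ directly. Instead it introduces the auxiliary function $u_\e(x)=\frac{1}{\e}\intt K_{cH}^\e(x-y)(Q_\e(x)-Q_\e(y))\,dy$, proves $\|u_\e\|_{L^2}$ is bounded via the same coercivity estimate you use, extracts a weakly convergent subsequence $u_j\rightharpoonup u$ in $L^2$, and identifies $u$ by pairing against smooth test functions $\phi$. The discrete integration by parts $\intt\phi\,D_{-z}Q_j=\intt D_z\phi\,Q_j$ transfers the finite difference onto $\phi$, where smoothness makes $D_{\e z}\phi\to(\hat z\cdot\nabla)\phi$ trivial, and then only pointwise a.e.\ convergence of $Q_j$ plus dominated convergence is needed. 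The final step pairs the weak $L^2$ limit $u_j\rightharpoonup u$ against the strong $L^2$ limit $\xi_j\to\xi$. Your first splitting step (replacing $\xi_j$ by $\xi$) is fine and is essentially this last pairing in disguise; it is the treatment of the remaining piece that fails. A repair along your lines would be to move the finite difference onto $\xi$ rather than leave it on $Q_j$, since $\xi\in W^{1,2}$ is known, but as written the argument does not close.
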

\begin{proof}
First, we define $u_\epsilon(x)=\frac{1}{\epsilon}\intt K^\epsilon_{cH}(x-y)\big(Q_\epsilon(x)-Q_\epsilon(y)\big)\,dy$. The strategy is first to obtain a uniform $L^2$ bound on $u_\e$ so that we may take a weakly converging subsequence, and then the limit can be identified by investigating the limit in the sense of distributions. We estimate
\begin{equation}
\begin{split}
||u_\epsilon||_2^2 =& \frac{1}{\epsilon^2}\intt\left|\intt K_{cH}^\epsilon(x-y)\big(Q_\epsilon(x)-Q_\epsilon(y)\big)\,dy\right|^2\,dx\\
\leq & \frac{C}{\epsilon^2} \intt\left(\intt g^\epsilon(x-y)\big|Q_\epsilon(x)-Q_\epsilon(y)\big|\,dy\right)^2\,dx\\
\leq &  \frac{C}{\epsilon^2} \intt\left(\intt g^\epsilon(x-y)\big|Q_\epsilon(x)-Q_\epsilon(y)\big|^2\,dy\right)\left(\intt g^\epsilon(x-y)\,dy\right)\,dx\\
\leq &  \frac{C}{\epsilon^2} \intt\intt g^\epsilon(x-y)\big|Q_\epsilon(x)-Q_\epsilon(y)\big|^2\,dy\,dx\\
\leq & C\left(\mf_\epsilon(Q_\epsilon,\xi_\epsilon)+1\right)
\end{split}
\end{equation}
So this implies that $u_\epsilon$ is $L^2$ bounded. Hence we can take a subsequence $\epsilon_j$ with $Q_j \overset{L^2}{\to} Q \in W^{1,2}(\mt^3,\mm)$ and $Q_j(x)\to Q(x)$ pointwise almost everywhere, ${\xi_j=\xi_{\epsilon_j}\overset{L^2}{\to}\frac{s_c}{s_0}Q}$, and $u_{\epsilon_j}=u_j\overset{L^2}{\rightharpoonup} u$. We find $u$ by integrating against a test function $\phi \in \mathcal{D}(\mt^3)$. This gives, by a change of variables $z=x-y$, 
\begin{equation}
\begin{split}
\intt u_j(x)\phi(x)\,dx =& \frac{1}{\epsilon}\intt\intt\phi(x)K_{cH}^{\epsilon_j}(x-y)\big(Q_j(x)-Q_j(y)\big)\,dy\,dx\\
=& -\frac{1}{\epsilon}\intt\intt\phi(x)K_{cH}^{\epsilon_j}(z)|z|D_{-z}Q_j(x)\,dz\,dx\\
=&  -\frac{1}{\epsilon}\intt K_{cH}^{\epsilon_j}(z)|z|\intt\phi(x)D_{-z}Q_j(x)\,dx\,dz\\
=& -\frac{1}{\epsilon}\intt K_{cH}^{\epsilon_j}(z)|z|\intt D_z\phi(x)Q_j(x)\,dx\,dz.
\end{split}
\end{equation}
Now, as $\phi$ is smooth and periodic,
\begin{equation}
\frac{1}{\epsilon}\intt\intt K_{cH}^{\epsilon_j}(z)Q_j(x)|z| D_z\phi(x)\,dz\,dx=\intt\int_{\mathbb{R}^3}K_{cH}(z)Q_j(x)|z|D_{\epsilon z}\phi(x)\,dz\,dx.
\end{equation}
The integrand can be bounded by $Cg(z)|z|||\nabla\phi||_\infty$ pointwise almost everywhere, which, as $g\in L^1$ with finite second moment and $\mt^3$ is bounded, is integrable over $(x,z)\in\mt^3\times\mathbb{R}^3$. Furthermore, as $\phi$ is smooth, $D_{\epsilon z}\phi(x)\to(\hat{z}\cdot \nabla)\phi(x)$. Recalling $Q_j(x)$ converges pointwise almost everywhere to $Q(x)$ also, this implies that the integrand converges pointwise to $K_{cH}(z)Q(x)(z\cdot \nabla)\phi(x)$ as $\epsilon \to 0$. Thus by dominated convergence, 
\begin{equation}\begin{split}
&\lim\limits_{\epsilon\to 0} -\frac{1}{\epsilon}\intt K_{cH}^{\epsilon_j}(z)|z|\intt D_z\phi(x)Q_j(x)\,dx\,dz\\
=& -\intt \int_{\mathbb{R}^3}K_{cH}(z)Q(x)(z\cdot\nabla)\phi(x)\,dz\,dx\\
=& \intt \left(\int_{\mathbb{R}^3}K_{cH}(z)\otimes z\,dz\right)\cdot \nabla Q (x)\phi(x)\,dx.
\end{split}
\end{equation}
As the limit in $\mathcal{D}'(\mathbb{T}^3)$ and weak limit in $L^2$ must coincide,
\begin{equation}
u= \left(\int_{\mathbb{R}^3}K_{cH}(z)\otimes z\,dz\right)\cdot \nabla Q.
\end{equation}
Now, finally, we note that 
\begin{equation}\begin{split}
&\frac{1}{\epsilon}\intt \intt K_{cH}^\epsilon(x-y)\xi_j(x)\cdot (Q_j(x)-Q_j(y))\,dx\,dy\\
=& \intt \xi_j(x)\cdot u_j(x)\,dx\\
\to &\intt\xi(x)\cdot u(x)\,dx\\
=&  \intt\xi(x)\cdot \left(\int_{\mathbb{R}^3}K_{cH}(z)\otimes z\,dz\right)\cdot \nabla Q(x)\,dx,
\end{split}
\end{equation}
as $\xi_j$ converges in $L^2$ strongly.
\end{proof}

\begin{theorem}\label{thmLiminf}
Let $ Q \in W^{1,2}(\mt^3,\mm)$, $Q_\epsilon \overset{L^2}{\to} Q$, $\xi_\epsilon\overset{L^2}{\to}\frac{s_c}{s_0}Q$. Then 
\begin{equation}
\liminf\limits_{\epsilon\to 0} \mf_\e(Q_\e,\xi_\e)\geq \intt \frac{1}{2}L\nabla Q\cdot \nabla Q+\frac{s_c\rho_0}{s_0}VQ\cdot \nabla Q\,dx -\frac{s_c^2\rho_0^2k_{DD}^0}{3}|\Omega|
\end{equation}
Here $L$, $V$ are given as 
\begin{equation}
\begin{split}
L\nabla Q\cdot \nabla Q=&\frac{1}{2}\left(\int_{\mathbb{R}^3}K_{HH}(z)z_\alpha z_\beta\,dz\right)\frac{\partial Q}{\partial x_\alpha}\cdot \frac{\partial Q}{\partial x_\beta},\\
VQ\cdot \nabla Q=&\left(\int_{\mathbb{R}^3}K_{cH}(z)z_\alpha\,dz\right)Q\cdot \frac{\partial Q}{\partial x_\alpha}.
\end{split}
\end{equation}
\end{theorem}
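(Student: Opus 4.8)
The plan is to prove the liminf inequality by isolating the single contribution that is only lower semicontinuous — the Host--Host elastic energy — from all the remaining contributions, which in the dilute scaling either have a definite sign or converge to explicit limits. First I would reduce to the case $\liminf_{\epsilon\to 0}\mf_\epsilon(Q_\epsilon,\xi_\epsilon)<+\infty$, since otherwise the inequality is vacuous, and then pass to a subsequence $\epsilon_j\to 0$ along which $\mf_{\epsilon_j}(Q_{\epsilon_j},\xi_{\epsilon_j})$ converges to this liminf and is uniformly bounded. That uniform bound is exactly the hypothesis required to invoke \Cref{propLiminfNonlocal,propDHToZero,propCDTo0,propChiralTermConverge}. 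The hypothesised convergences $Q_\epsilon\to Q$ and $\xi_\epsilon\to\frac{s_c}{s_0}Q$ in $L^2$ are inherited by every further subsequence, and the target right-hand side depends only on the fixed limit $Q\in W^{1,2}(\mt^3,\mm)$, so I am free to extract as many subsequences as the cited propositions demand without affecting the conclusion.

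Next I would split $\mf_\epsilon$ as in \Cref{defEnergy} into six groups: (i) the pointwise-nonnegative bulk remainder $\frac{1}{\epsilon^2}\big(\psi_s(Q_\epsilon)-\frac{k_{HH}^0}{2}|Q_\epsilon|^2\big)+\frac{\rho_0}{\epsilon}\big(\psi_s(\xi_\epsilon)-k_{HD}^0Q_\epsilon\cdot\xi_\epsilon\big)-c_\epsilon$, which is $\geq 0$ by the very definition \eqref{eqCEpsilon} of $c_\epsilon$; (ii) the Host--Host elastic term; (iii) the Dopant--Dopant elastic term, which is $\geq 0$ since $K_{DD}\geq C_1 g\,\text{Id}$ and $g^\epsilon\geq 0$; (iv) the quadratic bulk term $-\frac{\rho_0^2 k_{DD}^0}{2}\int_{\mt^3}|\xi_\epsilon|^2$; (v) the Host--Dopant cross elastic term together with the chiral Dopant--Dopant term; and (vi) the chiral Host--Dopant term. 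For (iv), strong $L^2$ convergence of $\xi_\epsilon$ together with $Q\in\mm$ almost everywhere (so $|Q|^2=\frac{2}{3}s_0^2$ pointwise) yields the limit $-\frac{\rho_0^2 k_{DD}^0 s_c^2}{3}|\mt^3|$, which is precisely the constant in the statement. Group (v) converges to $0$ by \Cref{propDHToZero} and by the estimate of \Cref{propCDTo0} (the latter applying verbatim to the energy's form $(\xi_\epsilon(x)-\xi_\epsilon(y))\cdot\xi_\epsilon(y)$, since only $|\xi_\epsilon(x)-\xi_\epsilon(y)|$ and the uniform $L^\infty$ bound on $\xi_\epsilon$ are used). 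Group (vi) converges by \Cref{propChiralTermConverge}; here I must track the sign carefully, since the energy contains $Q_\epsilon(y)-Q_\epsilon(x)=-(Q_\epsilon(x)-Q_\epsilon(y))$ and $K_{cH}(z)$ is antisymmetric as an operator on $\text{Sym}_0(3)$, so that after substituting the limit $\xi=\frac{s_c}{s_0}Q$ the contribution becomes exactly $+\frac{s_c\rho_0}{s_0}\int_{\mt^3}VQ\cdot\nabla Q$.

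Finally I would assemble the bound using superadditivity of the liminf together with the genuine convergence of groups (iv), (v), (vi). Writing $\mf_\epsilon=\big[\text{(i)}+\text{(ii)}+\text{(iii)}\big]+\big[\text{(iv)}+\text{(v)}+\text{(vi)}\big]$, the second bracket may be pulled out as an honest limit, while the first bracket is bounded below, using that (i) and (iii) are nonnegative, by the Host--Host elastic term (ii) alone; \Cref{propLiminfNonlocal} applied with $K=K_{HH}$, carrying the prefactor $\frac14$, then gives $\liminf(\text{ii})\geq\frac12\int_{\mt^3}L\nabla Q\cdot\nabla Q$. Collecting the limits produces the claimed inequality. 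I expect the main obstacle to be precisely this combination step: because the Host--Host term is only lower semicontinuous, one cannot simply add liminfs, and the argument genuinely relies on the dilute scaling forcing every other elastic and chiral contribution to have a definite sign or an explicit limit, so that the single lower-semicontinuous term can be isolated cleanly. A secondary delicate point is the sign and coefficient bookkeeping in group (vi), where the operator antisymmetry of $K_{cH}$ is what converts the limit furnished by \Cref{propChiralTermConverge} into the operator $V$ of the statement.
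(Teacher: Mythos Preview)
Your proposal is correct and follows essentially the same approach as the paper: drop the nonnegative bulk and Dopant--Dopant elastic terms, show that the Host--Dopant elastic and chiral Dopant--Dopant terms vanish via \Cref{propDHToZero,propCDTo0}, pass to the limit in the chiral Host--Dopant term via \Cref{propChiralTermConverge} and in $\|\xi_\epsilon\|_2^2$ by strong $L^2$ convergence, and invoke \Cref{propLiminfNonlocal} for the remaining Host--Host elastic term. Your treatment is in fact slightly more careful than the paper's about the subsequence extraction and the sign bookkeeping in group (vi); your observation that the operator antisymmetry of $K_{cH}(z)$ on $\text{Sym}_0(3)$ reconciles the form of the limit in \Cref{propChiralTermConverge} with the operator $V$ of the statement is exactly right.
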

\begin{proof}
We assume the liminf is finite, else there is nothing to prove. By observing many of the terms in the energy are non-negative, we have for any $Q_\e,\xi_\e$
\begin{equation}\begin{split}
&\mf_\e(Q_\e,\xi_\e)\\
\geq & \frac{1}{\epsilon^2}\intt\intt \frac{1}{4}K^{\e}_{HH}(x-y)\big(Q_\e(x)-Q_\e(y)\big)^{\otimes 2}+\rho_0\epsilon K_{cH}^\e(x-y)\xi_e(x)\cdot(Q_\e(x)-Q_\e(y))\\
&+\frac{1}{2}\rho_0\epsilon K_{DH}^\e(x-y)(Q_\e(x)-Q_\e(y))\cdot (\xi_\e(x)-\xi_e(y))+\rho_0^2\epsilon^2 K_{cD}^\e(x-y)\xi_e(x)\cdot(\xi_\e(x)-\xi_\e(y))\,dy\,dx\\
&-\frac{\rho_0^2k_{DD}^0}{2}||\xi_\epsilon||^2
\end{split}
\end{equation}
From \Cref{propCDTo0,propDHToZero}, we see that the second line terms vanish as $\epsilon\to 0$. The $L^2$ norm of $\xi_\epsilon$ converges due to the $L^2$ convergence of $\xi_\epsilon$. From \Cref{propChiralTermConverge} it is seen that 
\begin{equation}\begin{split}
\rho_0\epsilon K_{cH}^\e(x-y)\xi_e(x)\cdot(Q_\e(x)-Q_\e(y))\to &\rho_0\intt V\xi\cdot \nabla Q\,dx\\
=&\frac{s_c\rho_0}{s_0}\intt VQ\cdot \nabla Q\,dx\end{split}
\end{equation}
and from \Cref{propLiminfNonlocal} we have that 
\begin{equation}
\liminf\limits_{\epsilon\to 0} \frac{1}{4\epsilon^2}\intt\intt K^{\e}_{HH}(x-y)\big(Q_\e(x)-Q_\e(y)\big)^{\otimes 2}\geq \intt \frac{1}{2}L\nabla Q\cdot\nabla Q\,dx.
\end{equation}
Combining these gives the required result. 
\end{proof}

\subsection{Limsup inequality}\label{subsecLimsup}

The ``bulk"-type contribution to the energy is singular as $\epsilon\to 0$, so in constructing recovery sequences it is essential we can gain strong control over the bulk energy. The simplest means to do so will be to construct recovery sequences that, for each $\epsilon>0$, have zero bulk energy. As the bulk energy itself depends on $\epsilon$ in a non-trivial way, before proceeding we must first understand the behaviour of the minimisers of the bulk energy. The perturbative nature of the $\epsilon$ dependence makes this tractable however.

\begin{proposition}\label{propUniaxialApprox}
There exists $\epsilon^*>0$ so that for all $0<\epsilon<\epsilon^*$, there exists minimisers of ${\psi_s(Q)-\frac{k_{HH}^0}{2}|Q|^2+\epsilon\rho_0(\psi_s(\xi)-k_{HD}^0Q\cdot \xi)}$ of the form 
\begin{equation}\begin{split}
Q_\epsilon=&s_0(\epsilon)\sigma(n),\\
\xi_\epsilon=&s_c(\epsilon)\sigma(n).\end{split}
\end{equation}
Furthermore, as $\epsilon\to 0$, $s_0(\epsilon)\to s_0$, $s_c(\epsilon)\to s_c$. 
\end{proposition}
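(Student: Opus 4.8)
The plan is to first eliminate the dopant variable exactly, then exploit frame-indifference to reduce the problem to two scalar degrees of freedom. For fixed $Q\in\mq$ the map $\xi\mapsto \psi_s(\xi)-k_{HD}^0Q\cdot\xi$ is strictly convex by part (1) of \Cref{propSingPotential}, so it has the unique minimiser $\xi(Q)=\Lambda^{-1}(k_{HD}^0Q)\in\mq$ with minimum value $\phi(Q):=-\ln\int_{\mathbb{S}^2}\exp(k_{HD}^0Qp\cdot p)\,dp$, exactly as computed in the compactness section. Both $\xi(Q)$ and $\phi$ are smooth and frame indifferent, and $\phi$ is bounded on the bounded set $\mqc$. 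Minimising the full bulk energy over $(Q,\xi)$ is therefore equivalent to minimising the reduced energy $\Phi_\epsilon(Q):=\psi_s(Q)-\frac{k_{HH}^0}{2}|Q|^2+\epsilon\rho_0\phi(Q)$ over $Q\in\mq$. Since $\psi_s\to+\infty$ as $\lambda_{\min}(Q)\to-\frac13$ while $|Q|^2$ and $\phi$ stay bounded, $\Phi_\epsilon$ blows up at $\partial\mq$, so the direct method produces an interior minimiser; it then suffices to show this minimiser can be taken uniaxial.

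Next I would use that $\Phi_\epsilon$ is a frame-indifferent function of $Q$, hence a symmetric function of its eigenvalues, so its minimisation reduces to minimisation over diagonal tensors. I would parametrise these by $Q(s,t)=s\sigma(e_1)+t(e_2\otimes e_2-e_3\otimes e_3)$, precisely the perturbations appearing in part (7) of \Cref{propSingPotential}, where $t=0$ corresponds to the prolate uniaxial tensors. Writing $G_\epsilon(s,t):=\Phi_\epsilon(Q(s,t))$, the crucial observation is that the reflection $t\mapsto-t$ merely interchanges two eigenvalues of $Q(s,t)$ and hence leaves $G_\epsilon$ invariant, so $\partial_t G_\epsilon(s,0)=0$ identically. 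Thus $\{t=0\}$ is always critical in the transverse direction, and locating a uniaxial critical point reduces to the single equation $\partial_sG_\epsilon(s,0)=0$. At $\epsilon=0$ part (7) of \Cref{propSingPotential} gives $\partial_s^2G_0(s_0,0)>0$, so by the implicit function theorem there is a smooth branch $s_0(\epsilon)\to s_0$ with $\nabla G_\epsilon(s_0(\epsilon),0)=0$; by continuity the Hessian of $G_\epsilon$ there stays positive definite for small $\epsilon$, making $(s_0(\epsilon),0)$ a nondegenerate local minimiser of $G_\epsilon$.

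It then remains to upgrade this local minimiser to the global one, and this is the main obstacle. Because $\phi$ is bounded, $\Phi_\epsilon\to\psi_s-\frac{k_{HH}^0}{2}|\cdot|^2$ uniformly on $\mqc$, so $\min\Phi_\epsilon\to c_0$ and any family of global minimisers $Q_\epsilon^*$ satisfies $\psi_s(Q_\epsilon^*)-\frac{k_{HH}^0}{2}|Q_\epsilon^*|^2\to c_0$; since this energy attains $c_0$ exactly on $\mm$ and is continuous with sublevel sets compactly contained in $\mq$, the minimisers converge to $\mm$, and in particular the eigenvalue configuration of $Q_\epsilon^*$ converges to that of $s_0\sigma(e_1)$, namely $(s_0,0)$. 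For $\epsilon$ small enough this forces the eigenvalue configuration into the neighbourhood where, by the nondegeneracy of $G_0$ at $(s_0,0)$ and persistence under perturbation, $(s_0(\epsilon),0)$ is the unique critical point of $G_\epsilon$; since the interior minimiser's configuration is itself such a critical point, it must coincide with $(s_0(\epsilon),0)$, whence $t=0$ and $Q_\epsilon^*$ is uniaxial. Choosing any $n\in\mathbb{S}^2$ gives $Q_\epsilon=s_0(\epsilon)\sigma(n)$, and the optimal dopant tensor $\xi_\epsilon=\Lambda^{-1}(k_{HD}^0Q_\epsilon)$ is uniaxial with the same director because $\Lambda^{-1}$ is frame indifferent and hence commutes with the rotations fixing $n$, so $\xi_\epsilon=s_c(\epsilon)\sigma(n)$; continuity of $\Lambda^{-1}$ finally yields $s_0(\epsilon)\to s_0$ and $s_c(\epsilon)\to s_c$. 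The delicate point throughout is converting the symmetry-protected local uniaxial critical point into a genuine global minimiser, which rests jointly on the localisation of minimisers onto $\mm$ and on the transverse nondegeneracy supplied by part (7) of \Cref{propSingPotential}.
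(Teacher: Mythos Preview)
Your argument is correct and follows essentially the same route as the paper: eliminate $\xi$ exactly via convex duality to obtain a reduced frame-indifferent energy $\Phi_\epsilon(Q)$, pass to the two-parameter $(s,t)$-chart, and use the nondegeneracy of $\psi_B^0$ at $(s_0,0)$ from part~(7) of \Cref{propSingPotential} to produce a uniaxial branch $s_0(\epsilon)\to s_0$, then recover $\xi_\epsilon$ by frame invariance of $\Lambda^{-1}$. Your treatment is in fact slightly more careful than the paper's in one respect: you explicitly justify why the \emph{global} minimiser of $\Phi_\epsilon$ must lie near $(s_0,0)$ via the uniform convergence $\Phi_\epsilon\to\psi_B^0$ on $\mqc$, whereas the paper's bifurcation-style phrasing (``no biaxial branch can intersect $Q_0$'') tacitly assumes this localisation.
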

\begin{proof}
Let $\epsilon>0$. First, we minimise with respect to $\xi$ in the same manner as before, yielding $\Lambda(\xi)=k_{HD}^0Q$. We substitute this back into the expression, recalling the alternative expression for $\psi_s$, as 
\begin{equation}
\begin{split}
&\psi_s(\xi)-k_{HD}^0Q\cdot \xi\\
=&  - \ln\int_{\mathbb{S}^2}\exp(k_{HD}^0Qp\cdot p)\,dp.
\end{split}
\end{equation}
Returning to our minimisation problem, we have 
\begin{equation}
\begin{split}
\min\limits_{Q,\xi}=& \psi_s(Q)-\frac{k_{HH}^0}{2}|Q|^2+\epsilon\rho_0(\psi_s(\xi)-k_{HD}^0Q\cdot \xi)\\
=&\min\limits_{Q}\psi_s(Q)-\rho_0\frac{k_{HH}^0}{2}|Q|^2-\epsilon\rho_0\ln\int_{\mathbb{S}^2}\exp(k_{HD}^0Qp\cdot p)\,dp.
\end{split}
\end{equation}
We proceed by a perturbation argument. Define the perturbed bulk energy as \begin{equation}
\psi_B^\e(Q)=\psi_s(Q)-\frac{k_{HH}^0}{2}|Q|^2-\epsilon\rho_0\ln\int_{\mathbb{S}^2}\exp(k_{HD}^0Qp\cdot p)\,dp.
\end{equation} 
First, we wish to remove the degeneracy of symmetry from the problem, as $\psi_B^\e$ is frame indifferent. We consider Q-tensors to be parametrised by $\eta(s,t)=s\sigma(e_1)+t(e_2\otimes e_2-e_3\otimes e_3)$, for $s,t\in\mathbb{R}$. The orbit of such Q-tensors under $\text{SO}(3)$ gives all of $\mathcal{Q}$. In this coordinate system, fixing the orthogonal basis $e_1,e_2,e_3$ and treating $s,t$ as parameters, the global minimisers of $\psi_B^0$ are isolated in $(s,t)$-space. All global minimisers of $\psi_B^0$ are equivalent under $\text{SO}(3)$, so we investigate the most convenient, $\eta(s_0,0)$. As $\psi_B^\e$ is smooth in $\e$,to show minimisers of $\psi_B^\e$ are uniaxial for small $\e$, it suffices to show no branch of biaxial solutions $Q_{\e}=\eta(s_\e,t_\e)$ intersects $Q_0$ at $\e=0$, where biaxiality implies $t_\e\neq 0$. As $\eta(s,t)$ and $\eta(s,-t)$ are equivalent under rotations, if solutions were to be biaxial for all $\epsilon>0$, then the branch would split into the two symmetry related configurations. However, we know that there is no bifurcation due to the stability result of Li {\it et. al.} \cite{li2014local}, which states $\frac{\partial^2}{\partial (s,t)^2}\psi_B^0(Q_0)>0$. As a consequence, there must be a continuous branch of solutions with $t=0$ in $(s,t)$ coordinates, so we can write them as $Q_\epsilon=\eta(s_0(\epsilon),0)=s_0(\epsilon)\sigma(e_1)$. In particular, the continuity implies $s_0(\epsilon)\to s_0$.

Recalling that $\xi_\epsilon$ satisfies the equation $\Lambda(\xi_\epsilon)=k_{HD}^0Q_\epsilon$, we can use that $\Lambda$ is frame indifferent to infer that the rotations with $R\xi_\epsilon R^T=\xi_\epsilon$ are precisely those with $RQ_\epsilon R^T=Q_\epsilon$, hence $\xi_\epsilon$ must be uniaxial with the same director as $Q_\epsilon$, hence $\xi_\epsilon=s_c(\epsilon)\sigma(e_1)$. Furthermore, as $\Lambda$ admits a continuous inverse, and $Q_\epsilon\to Q$, this implies \begin{equation}
s_c(\epsilon)\sigma(e_1)=\xi_\epsilon=\Lambda^{-1}(k_{HD}^0Q_\epsilon)\to \Lambda^{-1}(k_{HD}^0Q_0)=\xi_0=s_c\sigma(e_1).
\end{equation} 
Hence $s_c(\epsilon)\to s_c$. 
\end{proof}

\begin{proposition}
Let $Q\in W^{1,2}(\mathbb{T}^3,\mm)$. Let $Q_\epsilon=\frac{s_0(\epsilon)}{s_0}Q$, $\xi_\epsilon = \frac{s_c(\epsilon)}{s_0}Q$. Then $Q_\epsilon\to Q$ in $W^{1,2}$, $\xi_\epsilon \to \frac{s_c}{s_0}Q$, and \begin{equation}
\lim\limits_{\epsilon\to 0} \mf_\epsilon(Q_\epsilon,\xi_\epsilon)= \intt \frac{1}{2}L\nabla Q\cdot \nabla Q+\frac{s_c\rho_0}{s_0}VQ\cdot \nabla Q\,dx -\frac{s_c^2\rho_0^2k_{DD}^0}{3}|\Omega|.
\end{equation}
\end{proposition}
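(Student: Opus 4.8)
The plan is to exploit the fact that this recovery sequence is engineered so that the genuinely singular, ``bulk'' part of $\mf_\epsilon$ vanishes identically for every $\epsilon$, after which only the $O(1)$ contributions must be passed to the limit. First I would record the convergence claims: since $s_0(\epsilon)\to s_0$ and $s_c(\epsilon)\to s_c$ by \Cref{propUniaxialApprox}, and $Q\in W^{1,2}(\mt^3,\mm)$ is a fixed function, the scalar multiples $Q_\epsilon=\frac{s_0(\epsilon)}{s_0}Q$ and $\xi_\epsilon=\frac{s_c(\epsilon)}{s_0}Q$ converge strongly in $W^{1,2}$ (and a fortiori in $L^2$) to $Q$ and $\frac{s_c}{s_0}Q$, respectively. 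Strong $W^{1,2}$ convergence is precisely the hypothesis that unlocks the \emph{equality} form of \Cref{propLimsupNonlocal}.

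The crucial step is that because $Q$ is $\mm$-valued, for almost every $x$ we have $Q(x)=s_0\sigma(n(x))$ for some $n(x)\in\mathbb{S}^2$, whence $Q_\epsilon(x)=s_0(\epsilon)\sigma(n(x))$ and $\xi_\epsilon(x)=s_c(\epsilon)\sigma(n(x))$. By frame indifference, \Cref{propUniaxialApprox} guarantees that for each director the pair $(s_0(\epsilon)\sigma(n),s_c(\epsilon)\sigma(n))$ minimises $\psi_s(Q)-\frac{k_{HH}^0}{2}|Q|^2+\epsilon\rho_0(\psi_s(\xi)-k_{HD}^0Q\cdot\xi)$, whose minimum equals $\epsilon^2c_\epsilon$ by \eqref{eqCEpsilon}. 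Therefore the singular integrand $\frac{1}{\epsilon^2}(\psi_s(Q_\epsilon)-\frac{k_{HH}^0}{2}|Q_\epsilon|^2)+\frac{\rho_0}{\epsilon}(\psi_s(\xi_\epsilon)-k_{HD}^0Q_\epsilon\cdot\xi_\epsilon)-c_\epsilon$ is zero for almost every $x$, so its integral over $\mt^3$ vanishes for \emph{every} $\epsilon>0$, eliminating the $\epsilon^{-2}$ and $\epsilon^{-1}$ scales outright.

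It then remains to take limits in the surviving terms. The leftover bulk term $-\frac{\rho_0^2k_{DD}^0}{2}\intt|\xi_\epsilon|^2\,dx$ is immediate once one observes $|\xi_\epsilon(x)|^2=\tfrac23 s_c(\epsilon)^2$ pointwise (as $|\sigma(n)|^2=\tfrac23$), giving the contribution $-\frac{s_c^2\rho_0^2k_{DD}^0}{3}|\Omega|$. For the Host-Host non-local term I would invoke \Cref{propLimsupNonlocal} with $K_{HH}$ along $Q_\epsilon\to Q$, yielding $\frac12\intt L\nabla Q\cdot\nabla Q\,dx$ once the $\tfrac14$-versus-$\tfrac12$ normalisations of $L$ are reconciled. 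The Dopant-Dopant non-local term carries no compensating $\epsilon^{-2}$, so it equals $\epsilon^2$ times a quantity that converges (again by \Cref{propLimsupNonlocal}) to a finite limit and hence vanishes; the cross Dopant-Host term vanishes by \Cref{propDHToZero} and the chiral Dopant-Dopant term by \Cref{propCDTo0}. Finally the chiral Host-Dopant term tends to $\frac{s_c\rho_0}{s_0}\intt VQ\cdot\nabla Q\,dx$ by \Cref{propChiralTermConverge}, using the strong $L^2$ convergence $\xi_\epsilon\to\frac{s_c}{s_0}Q$. Summing the five genuine limits gives exactly the asserted value, and because each term converges (not merely as a $\limsup$), the stated limit exists.

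The step demanding the most care is the pointwise-minimiser argument: I must be sure that \Cref{propUniaxialApprox}, combined with frame indifference, produces a minimiser of the coupled host-dopant bulk functional at \emph{every} director $n$, so that inserting the measurable director field $n(x)$ makes the singular integrand vanish almost everywhere rather than only in integral average; this exact cancellation is what allows a finite-energy recovery sequence despite the $\epsilon^{-2}$ prefactor. A secondary technicality is checking the hypotheses of \Cref{propChiralTermConverge} (uniform energy boundedness along $(Q_\epsilon,\xi_\epsilon)$, which follows a posteriori since all remaining terms are bounded via the coercivity estimate \eqref{eqCoercivityEstimate}) and matching the normalisation constants in the definitions of $L$ and $V$ so that the limiting coefficients agree with the target expression precisely.
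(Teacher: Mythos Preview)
Your proposal is correct and follows essentially the same route as the paper: kill the singular bulk integrand identically by choosing the pointwise minimisers from \Cref{propUniaxialApprox}, discard the $DH$, $DD$ and $cD$ interaction terms via \Cref{propDHToZero,propCDTo0} and the extra $\epsilon^2$ factor, and pass to the limit in the $HH$, $cH$ and $|\xi_\epsilon|^2$ terms using \Cref{propLimsupNonlocal}, \Cref{propChiralTermConverge}, and $s_c(\epsilon)\to s_c$ respectively. Your explicit remark that uniform boundedness of $\mf_\epsilon(Q_\epsilon,\xi_\epsilon)$ (needed to invoke \Cref{propDHToZero,propCDTo0,propChiralTermConverge}) follows from the coercivity estimate once the bulk term is seen to vanish is a detail the paper leaves implicit, as is your separate treatment of the non-chiral $DD$ term, but these are refinements rather than a different strategy.
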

\begin{proof}
The fact that $Q_\epsilon,\xi_\epsilon\overset{W^{1,2}}{\to}Q_0,\xi_0$ is immediate from \Cref{propUniaxialApprox}. It remains to show the energy admits the correct limit. As $Q_\epsilon,\xi_\epsilon$ by definition have zero bulk energy, and by \Cref{propCDTo0,propDHToZero}, we can remove redundant terms in the energy that vanish in the limit to give
\begin{equation}\begin{split}
&\lim\limits_{\epsilon\to 0} \mf_\epsilon(Q_\epsilon,\xi_\epsilon)\\
=&\lim\limits_{\epsilon\to 0} \intt\intt \frac{1}{4\epsilon^2}K_{HH}^\epsilon(x-y)\left(Q_\epsilon(x)-Q_\epsilon(y)\right)^{\otimes 2} +\frac{1}{\epsilon}K^\epsilon_{cH}\xi_\epsilon(x)\cdot (Q_\epsilon(x)-Q_\epsilon(y))\,dy\,dx-\intt\frac{k_{DD}^0}{2}|\xi_\epsilon(x)|^2\,dx\\
=&\lim\limits_{\epsilon\to 0} \intt\intt \frac{s_0(\epsilon)^2}{4s_0^2\epsilon^2}K_{HH}^\epsilon(x-y)\left(Q(x)-Q(y)\right)^{\otimes 2} +\frac{s_c(\epsilon)s_0(\epsilon)}{s_0^2\epsilon}K^\epsilon_{cH}Q(x)\cdot (Q(x)-Q(y))\,dy\,dx-\intt\frac{s_c(\epsilon)^2k_{DD}^0}{2s_0^2}|Q(x)|^2\,dx.
\end{split}
\end{equation}
Now, we use the convergence result of \Cref{propLimsupNonlocal} on the first term, \Cref{propChiralTermConverge} on the second term, and and that $s_0(\epsilon)\to s_0$, $s_c(\epsilon)\to s_c$, to give 
\begin{equation}
\lim\limits_{\epsilon\to 0} \mf_\epsilon(Q_\epsilon,\xi_\epsilon)=\intt \frac{1}{2}L\nabla Q\cdot \nabla Q+\frac{s_c\rho_0}{s_0}VQ\cdot \nabla Q\,dx -\frac{s_c^2\rho_0^2k_{DD}^0}{3}|\Omega|.
\end{equation}
\end{proof}

This now leads us to the main result.
\begin{theorem}\label{thmPeriodicGamma}
The functionals $\mf_\e:L^\infty(\mt^3;\mqc)^2\to\mathbb{R}\cup\{+\infty\}$ as defined in \eqref{eqEnergy} $\Gamma$-converge with respect to strong-$L^2$ convergence to the functional $\mf_{OF}:L^\infty(\mt^3;\mqc^2)\to\mathbb{R}\cup\{+\infty\}$ defined by 
\begin{equation}
\mf_{OF}(Q,\xi)=\intt \frac{1}{2}L\nabla Q\cdot \nabla Q+\frac{s_c\rho_0}{s_0}VQ\cdot\nabla Q\,dx-\frac{s_c^2\rho_0^2k_{DD}^0}{3}|\Omega|
\end{equation}
if $Q,\xi\in W^{1,2}(\mt^3,\mqc^2)$, and $Q(x)\in\mm$ and $\xi(x)=\frac{s_c}{s_0}Q(x)$ pointwise almost everywhere, and $+\infty$ otherwise. The operators $L,V$ are defined by 
\begin{equation}
\begin{split}
L\nabla Q\cdot \nabla Q =&\frac{1}{2}\left(\int_{\mathbb{R}^3}K_{HH}(z)z_\alpha z_\beta\,dz\right)\frac{\partial Q}{\partial x_\alpha}\cdot \frac{\partial Q}{\partial x_\beta},\\
V Q\cdot \nabla Q =&\left(\int_{\mathbb{R}^3}K_{cH}(z)z_\alpha\,dz\right)Q\cdot \frac{\partial Q}{\partial x_\alpha}
\end{split}
\end{equation}
\end{theorem}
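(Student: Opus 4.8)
The plan is to verify the two defining conditions of $\Gamma$-convergence (the liminf and limsup inequalities) separately, since essentially all the analytic content has already been established in the preceding compactness, liminf, and recovery-sequence results. The theorem then reduces to assembling these, with the one genuinely delicate point being a careful bookkeeping of the set on which $\mf_{OF}$ is finite.

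For the liminf inequality I would begin with an arbitrary sequence $(Q_\e,\xi_\e)\to(Q,\xi)$ in $L^2$ and pass to a subsequence $(\e_k)$ realising $\liminf_{\e\to 0}\mf_\e(Q_\e,\xi_\e)$. If this value is $+\infty$ there is nothing to prove, so I assume it is finite; then $\mf_{\e_k}(Q_{\e_k},\xi_{\e_k})$ is uniformly bounded and \Cref{theoremCompactness} supplies a further subsequence along which $Q_{\e_k}\to\tilde Q\in W^{1,2}(\mt^3,\mm)$ and $\xi_{\e_k}\to\frac{s_c}{s_0}\tilde Q$ in $L^2$. Uniqueness of $L^2$ limits forces $\tilde Q=Q$ and $\xi=\frac{s_c}{s_0}Q$, so in particular $Q\in W^{1,2}(\mt^3,\mm)$ and the pair lies in the finiteness domain of $\mf_{OF}$. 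With the structural constraints on the limit now certified, \Cref{thmLiminf} applies directly to the chosen subsequence and gives $\liminf_k\mf_{\e_k}(Q_{\e_k},\xi_{\e_k})\geq\mf_{OF}(Q,\xi)$, as required. The same chain of reasoning disposes of the excluded case by contraposition: if $(Q,\xi)$ violates the constraints $Q\in\mm$ or $\xi=\frac{s_c}{s_0}Q$, then no subsequence can keep the energy bounded, whence $\liminf=+\infty=\mf_{OF}(Q,\xi)$.

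For the limsup inequality I would split on whether $\mf_{OF}(Q,\xi)$ is finite. When it is $+\infty$, the constant sequence $(Q_\e,\xi_\e)=(Q,\xi)$ converges trivially and, by the liminf inequality just proved, automatically satisfies $\lim_\e\mf_\e(Q,\xi)=+\infty=\mf_{OF}(Q,\xi)$. When $\mf_{OF}(Q,\xi)$ is finite we necessarily have $Q\in W^{1,2}(\mt^3,\mm)$ and $\xi=\frac{s_c}{s_0}Q$, and the sequence $Q_\e=\frac{s_0(\e)}{s_0}Q$, $\xi_\e=\frac{s_c(\e)}{s_0}Q$ built in the preceding proposition (using the perturbed bulk minimisers of \Cref{propUniaxialApprox}) converges to $(Q,\frac{s_c}{s_0}Q)$ in $W^{1,2}$ and attains $\lim_\e\mf_\e(Q_\e,\xi_\e)=\mf_{OF}(Q,\xi)$ exactly.

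The step I expect to require the most care is the interplay between compactness and the effective domain of $\mf_{OF}$ inside the liminf argument: \Cref{theoremCompactness} must be used not only to extract a limit but to certify that finite limiting energy is possible only on the constraint manifold $\{Q\in\mm,\ \xi=\frac{s_c}{s_0}Q\}$, and then the two descriptions of the single $L^2$ limit (the one coming from the hypothesis and the one coming from compactness) must be reconciled. Once that identification is handled, the remainder is routine reduction to \Cref{thmLiminf} and the recovery-sequence construction.
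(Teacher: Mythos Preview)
Your proposal is correct and follows the same approach as the paper: the paper does not give a separate proof of \Cref{thmPeriodicGamma} but presents it immediately after the recovery-sequence proposition with the phrase ``This now leads us to the main result,'' treating the theorem as a direct assembly of \Cref{theoremCompactness}, \Cref{thmLiminf}, and the limsup construction via \Cref{propUniaxialApprox}. Your write-up makes this assembly explicit, including the bookkeeping that the paper leaves implicit (identifying the compactness limit with the hypothesised $L^2$ limit, and handling the $\mf_{OF}=+\infty$ case in the limsup inequality), but the strategy is the same.
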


\section{Coefficients in standard form}\label{sectionCoefficients}
\subsection{Landau coefficients}
In $\mf_{OF}$, the term quadratic in the $\nabla Q$ is given by $\frac{1}{2}L\nabla Q\cdot\nabla Q$, and is frame indifferent. This reduces the degrees of freedom of $L$ down to three elastic constants, and by symmetry we know that any such function must be of the form \begin{equation}
L\nabla Q\cdot \nabla Q=L_1Q_{ij,j}Q_{ik,k}+L_2Q_{ij,k}Q_{ij,k}+L_3Q_{ij,k}Q_{ik,j}
\end{equation}
for scalars $L_1,L_2,L_3$ that can be obtained by testing various configurations of $\nabla Q$ (see e.g. \cite{mori1999multidimensional}). Relating the constants $L_1,L_2,L_3$ to various integrals of $K_{HH}$ has been done previously in \cite{taylor2018oseen} within the context of this model, to which we reference the reader for explicit formulae.

We now perform a similar symmetry argument here for the novel term, linear in $\nabla Q$. Recall that for $W_{ij}=\epsilon_{i\alpha j}z_\alpha$, we can write 
\begin{equation}
K_{cX}(z)A\cdot B=f_1(z)\text{Tr}(AWB)+f_2(z)\text{Tr}(WA(\hat{z}\otimes\hat{z}-I)B),
\end{equation}
with $f_1,f_2$ isotropic functions of $z$ (see \Cref{appSymmetry}).

The term in the energy linear in $\nabla Q$ is given by 
\begin{equation}
VQ\cdot\nabla Q = \int_{\mathbb{R}^3}f_1(z)\text{Tr}(QW(z\cdot\nabla)Q)+f_2(z)\text{Tr}(WQA(\hat{z}\otimes\hat{z}-I)(z\cdot\nabla)Q)
\end{equation}

Taking $A=Q$, $B=(z\cdot \nabla)Q$, integrating each part in turn we have 
\begin{equation}
\begin{split}
&\int_{\mathbb{R}^3}f_1(z)\text{Tr}(QW(z\cdot \nabla Q))\,dz\\
=& \int_{\mathbb{R}^3}f_1(z)Q_{ij}W_{jk}z_lQ_{ki,l}\,dz\\
=& \left(\int_{\mathbb{R}^3}f_1(z)z_m z_l\,dz\right)\epsilon_{jmk}Q_{ij}Q_{ki,l}\\
=& \beta_1\epsilon_{jlk}Q_{ij}Q_{ki,l}.
\end{split}
\end{equation}
The scalar $\beta_1$ is given by 
\begin{equation}
\beta_1=\frac{1}{3}\int_{\mathbb{R}^3}f_1(z)|z|^2\,dz=\frac{4\pi}{3}\int_0^\infty f_1(re_1)r^4\,dr.
\end{equation}

The fact that such a scalar representation exists follows from the isotropy of $f_1$, which implies $\int_{\mathbb{R}^3}z\otimes zf(z)\,dz$ must be a multiple of the identity. 

Turning to the second term, 
\begin{equation}
\begin{split}
&\int_{\mathbb{R}^3}f_2(z)\text{Tr}(WQ(\hat{z}\otimes \hat{z}-I)(z\cdot \nabla Q))\,dz\\
=&\int_{\mathbb{R}^3}f_2(z)W_{ij}Q_{jk}(\hat{z}_k\hat{z}_l-\delta_{kl})z_m  Q_{li,m}\,dz\\
=&\left(\int_{\mathbb{R}^3}f_2(z)z_n(\hat{z}_k\hat{z}_l-\delta_{kl})z_m  \,dz \right)\epsilon_{inj}Q_{jk}Q_{li,m}\\
=&\left(\int_{\mathbb{R}^3}f_2(z)z_n\hat{z}_k\hat{z}_lz_m-f_2(z)z_n\delta_{kl}z_m  \,dz \right)\epsilon_{inj}Q_{jk}Q_{li,m}
\end{split}
\end{equation}

To deal with the first term in the integrand, we exploit that $\frac{1}{4\pi}\int_{\mathbb{S}^2}p_ip_jp_kp_l\,dp=\frac{1}{15}\left(\delta_{ij}\delta_{kl}+\delta_{il}\delta_{jk}+\delta_{ik}\delta_{jl}\right)$ \cite{han2015microscopic}. This gives 
\begin{equation}
\begin{split}
&\int_{\mathbb{R}^3}f_2(z)z_n\hat{z}_k\hat{z}_lz_m\,dz\\
=&(4\pi)\int_0^\infty f_2(re_1)r^6\,dr\frac{1}{4\pi}\int_{\mathbb{S}^2}p_np_kp_lp_m\,dp\\
=& \beta_2\left(\delta_{nk}\delta_{lm}+\delta_{nl}\delta_{km}+\delta_{nm}\delta_{kl}\right),
\end{split}
\end{equation}
with
\begin{equation}
\beta_2=\frac{1}{15}(4\pi)\int_0^\infty f_2(re_1)r^6\,dr.
\end{equation}
For the second term, we have by a similar argument to before, 
\begin{equation}
\int_{\mathbb{R}^3}f_2(z)z_n\delta_{kl}z_m  \,dz=\beta_3 \delta_{nm}\delta_{kl}
\end{equation}
with 
\begin{equation}
\beta_3=\frac{4\pi}{3}\int_0^\infty f_3(re_1)r^4\,dr. 
\end{equation}

Thus we return to the full expression, to give 
\begin{equation}
\begin{split}
&\int_{\mathbb{R}^3}f_2(z)\text{Tr}(WQ(\hat{z}\otimes \hat{z}-I)(z\cdot \nabla Q))\,dz\\
=&\bigg( \beta_2\left(\delta_{nk}\delta_{lm}+\delta_{nl}\delta_{km}+\delta_{nm}\delta_{kl}\right)-\beta_3 \delta_{nm}\delta_{kl}\bigg)\epsilon_{inj}Q_{jk}Q_{li,m}\\
=&\beta_2\epsilon_{ikj}Q_{jk}Q_{li,l}+\beta_2\epsilon_{inj}Q_{jk}Q_{ni,k} +\beta_2\epsilon_{inj}Q_{jk}Q_{ki,n}-\beta_3\epsilon_{inj}Q_{jk}Q_{ki,n}\\
=&\beta_2\epsilon_{ikj}Q_{jk}Q_{li,l}+\beta_2\epsilon_{inj}Q_{jk}Q_{ni,k} +\beta_2\epsilon_{inj}Q_{jk}Q_{ki,n}-\beta_3\epsilon_{inj}Q_{jk}Q_{ki,n}\\
\end{split}
\end{equation}
Simplifying some of the terms, since $Q_{jk}=Q_{kj}$, but $\epsilon_{ikj}=-\epsilon_{ijk}$, this implies $\epsilon_{ikj}Q_{jk}=0$. Similarly, $\epsilon_{inj}Q_{ni,k}=0$, leaving only two terms, 
\begin{equation}
\left(\beta_2-\beta_3\right)\epsilon_{inj}Q_{jk}Q_{ki,n}
\end{equation}

All in all, this gives 
\begin{equation}
VQ\cdot \nabla Q = \big(\beta_1+\beta_2-\beta_3)\epsilon_{jkl}Q_{ij}Q_{ik,l}=\beta \epsilon_{jkl}Q_{ij}Q_{ik,l},.
\end{equation}
with 
\begin{equation}
\beta = \frac{4\pi}{3}\int_0^\infty f_1(re_1)r^4+\frac{1}{5}f_2(re_1)r^6-f_3(re_1)r^4\,dr. 
\end{equation}

\subsection{Frank coefficients and HTP}\label{subsecFrankCoeffs}
Assume that $Q$ is (at least locally) orientable, so it can be written as $Q=s_0\sigma(n)$ for some $n\in W^{1,2}(\Omega,\mathbb{S}^2)$. Then the term linear in the gradient of $Q$ becomes 
\begin{equation}
\begin{split}
VQ\cdot \nabla Q =& \beta\epsilon_{jkl}Q_{ij}Q_{ik,l}\\
=& s_0^2\beta\epsilon_{jkl}\left(n_in_j-\frac{1}{3}\delta_{ij}\right)\big(n_in_{k,l}+n_{i,l}n_k\big)\\
=&s_0^2\beta\left(\epsilon_{jkl}n_jn_{k,l}-\frac{1}{3}\epsilon_{jkl}n_jn_{k,l}-\frac{1}{3}\epsilon_{jkl}n_{j,l}n_k \right)\\
=&\beta s_0^2n\cdot\nabla\times n
\end{split}
\end{equation}

Combining these, and discarding an irrelevant additive constant, in the case of orientable configurations we have the $\Gamma$-limit as 
\begin{equation}\begin{split}
\mf_{OF}(n)=&s_0^2\intt \frac{1}{2}K_{11}(\nabla \cdot n)^2+\frac{1}{2}K_{22}(n\cdot \nabla \times n )^2+\frac{1}{2}K_{33}|n\times \nabla\times n|^2+\frac{s_c\beta \rho_0}{s_0}n\cdot \nabla \times n\,dx\\
=&s_0^2\intt \frac{1}{2}K_{11}(\nabla \cdot n)^2+\frac{1}{2}K_{22}(n\cdot \nabla \times n +q)^2+\frac{1}{2}K_{33}|n\times \nabla\times n|^2\,dx +C,
\end{split}
\end{equation}
where the ground state wavenumber $q$ is given by 
\begin{equation}\label{eqWavenumber}
q=\frac{s_c\beta\rho_0}{s_0K_{22}},
\end{equation}
and the helical pitch is given by $\frac{\pi}{q}$. Now we recall that this all takes place within a domain rescaled according to a small parameter $\epsilon$, and that constants have been non-dimensionalised. We now rephrase \eqref{eqWavenumber} into our system's original units. 

$K_{22}$ can be written as $\frac{\rho_H}{k_BT}\tilde{K_{22}}$ where $\tilde{K}_{22}$ is a constant with units of energy depending only on the hosts structure itself, and not its temperature or concentration. Similarly, we can write $\beta=\frac{\rho_H}{k_BT}\tilde{\beta}$, where $\tilde{\beta}$ is a temperature and concentration independent quantity, depending only on the species of host and dopant. Furthermore, we recall in our scaling that lengths had been non-dimensionalised, so if the wavenumber of our system is $q$ in dimensionless units, it is $\tilde{q}=\frac{q}{\epsilon}$ in our original units. Similarly, the true concentration of dopant is $\rho_D=\rho_0\rho_H\epsilon$. Hence reverting \eqref{eqWavenumber} into the original units of our problem, we have 
\begin{equation}\begin{split}
\frac{1}{\epsilon} \tilde{q}=&\frac{s_c\frac{\rho_H}{k_BT}\tilde{\beta}\rho_H\rho_D}{\rho_H\epsilon s_0\frac{\rho_H}{k_BT}\tilde{K}_{22}},\\
\Rightarrow \tilde{q} =& \frac{s_c\tilde{\beta}\rho_D}{\rho_Hs_0\tilde{K}_{22}}
\end{split}
\end{equation}

 Thus this gives our expression for the helical twisting power in dilute systems, as 
\begin{equation}
h=\left.\frac{\partial q}{\partial\rho_D}\right|_{\rho_D=0}=\frac{s_c\tilde{\beta}}{\rho_Hs_0\tilde{K}_{22}}
\end{equation} 
Again, we emphasise that $\tilde{\beta},\tilde{K}_{22}$ are temperature and concentration independent, defined entirely in terms of the pairwise Host-Dopant and Host-Host interactions, respectively. The temperature dependence of $h$ is thus encoded entirely within the ratio of the order parameters $s_c, s_0$, and generally we expect this to be a highly nonlinear description. However as $s_0,s_c$
solve one-dimensional minimisation problems, it is an elementary exercise to numerically illustrate the temperature dependence. The value of $s_0$ is determined entirely by $k^0_{HH}$, which can be written as $k^0_{HH}=\frac{\rho_H\tilde{k}_1}{k_BT}$ for a temperature and concentration independent constant $\tilde{k}_1$. There are explicit formulae for the relationship between $s_0$ and $k^0_{HH}$, available in \cite{fatkullin2005critical}. The order parameter $s_c$ is determined entirely by $k^0_{HD}=\frac{\rho_H\tilde{k}_2}{k_BT}$, where again $\tilde{k}_2$ is temperature and concentration dependent. $s_c$ can be explicitly obtained from $s_0$ as 
\begin{equation}
s_c=\frac{\int_{-1}^1 P_2(x)\exp\big(k_{HD}^0s_0P_2(x)\big)\,dx}{\int_{-1}^1\exp\big(k_{HD}^0s_0P_2(x)\big)\,dx},
\end{equation} 
where $P_2$ denotes the second Legendre polynomial, which follows from \eqref{eqXiCritical} and \eqref{eqLambdaMinus1}. We define a rescaled temperature $\tau=\frac{1}{k^0_{HH}}=\frac{k_BT}{\rho_H\tilde{k}_1}$, and parameter $\alpha=\frac{\tilde{k}_2}{\tilde{k}_1}=\frac{k^0_{HD}}{k^0_{HH}}$, which satisfies $\frac{\alpha}{\tau}=\rho_H\frac{\tilde{k}_2}{k_BT}$. From these we present the dependence of $h$, or more appropriately the dimensionless quantity $\tilde{h}=\frac{\tilde{K}_{22}\rho_H}{\tilde{\beta}}h=\frac{s_c}{s_0}$, in terms of $\tau$ for representative values of $\alpha$. We define $\tilde{h}$ as it is the only contribution to the HTP that is explicitly temperature dependent. 

Notably, when $\alpha=1$, $\tilde{h}$ is independent of the temperature. To see this, first we define the scalar function $\lambda:\left(-\frac{1}{2},1\right)\to\mathbb{R}$ by \begin{equation}\label{eqLambdaScalar}
\lambda(s)=\frac{3}{2}\Lambda(s\sigma(n))\cdot\sigma(n),
\end{equation} where by frame invariance the choice of $n\in\mathbb{S}^2$ does not effect the definition. Using this notation, the dependence of $s_0,s_c$ on material constants and parameters can be seen through their critical point conditions, 
\begin{equation}
\begin{split}
0=& \lambda(s_0)-\frac{s_0}{\tau},\\
0=& \lambda(s_c)-\alpha\frac{s_0}{\tau}.
\end{split}
\end{equation}
Thus when $\alpha=1$, $\lambda(s_c)=\lambda(s_0)$ so $s_c=s_0$ and $\tilde{h}=1$. Recalling the definition of $\alpha$, $\alpha=1$ corresponds to the case where $k^0_{HH}=k^0_{DH}$. That is, the (average) Host-Host interaction is equal to that of the Host-Dopant interaction. 

Moreso, we can write \begin{equation}\tilde{h}=\frac{s_c}{s_0}=\frac{\lambda^{-1}\left(\frac{\alpha s_0}{\tau}\right)}{\lambda^{-1}\left(\frac{s_0}{\tau}\right)}.\end{equation}
As $\lambda^{-1}$ is strictly increasing, this implies that if $\alpha<1$, $\tilde{h}<1$, and if $\alpha>1$, $\tilde{h}>1$. We also see that as $\tau \to 0$, as $s_c,s_0\to 1$, this means $\tilde{h}\to 1$. We also see that as $\alpha\to 0$, $\tilde{h}$ must be approximately linear in $\alpha$, and as $\alpha\to\infty$, $s_c\to 1$ so $\tilde{h}\to\frac{1}{s_0}$. We show the numerically found results below, for $\alpha=0.15$ showing a decreasing, convex dependence on $\tau$, $\alpha=0.75$ showing a concave, decreasing dependence on $\tau$, and $\alpha=2$, showing a convex increasing dependence on $\tau$. 
\begin{figure}
\begin{subfigure}[b]{0.3\textwidth}
\includegraphics[width=0.9\textwidth]{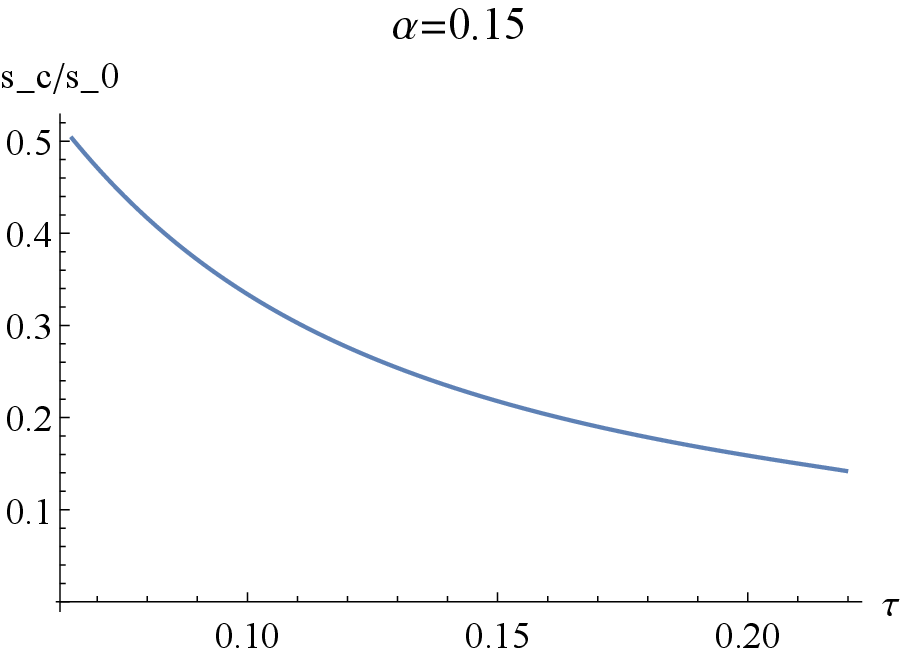}
\caption{$\alpha=0.15$}
\end{subfigure}
\begin{subfigure}[b]{0.3\textwidth}
\includegraphics[width=0.9\textwidth]{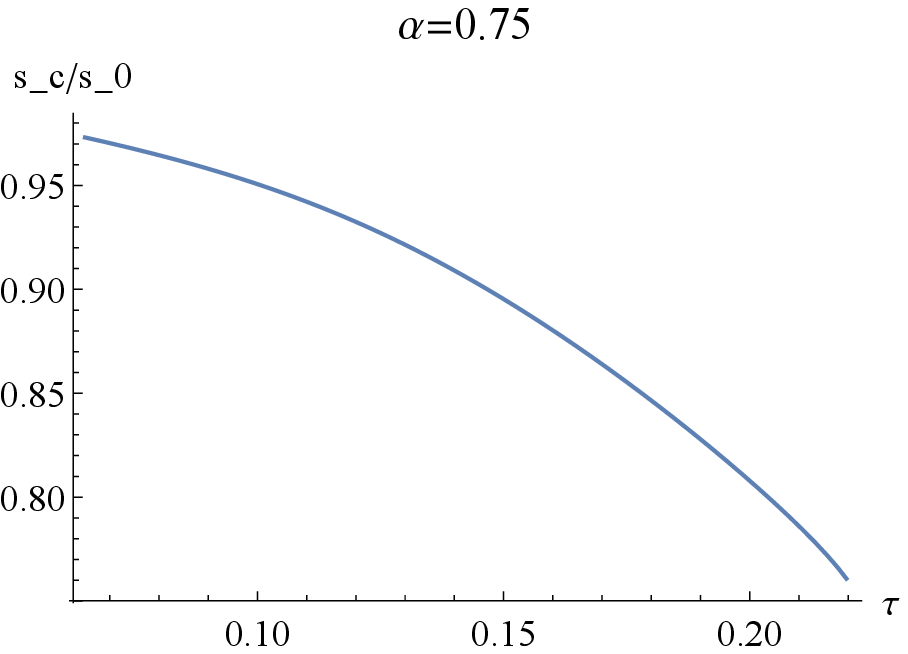}
\caption{$\alpha=0.75$}
\end{subfigure}
\begin{subfigure}[b]{0.3\textwidth}
\includegraphics[width=0.9\textwidth]{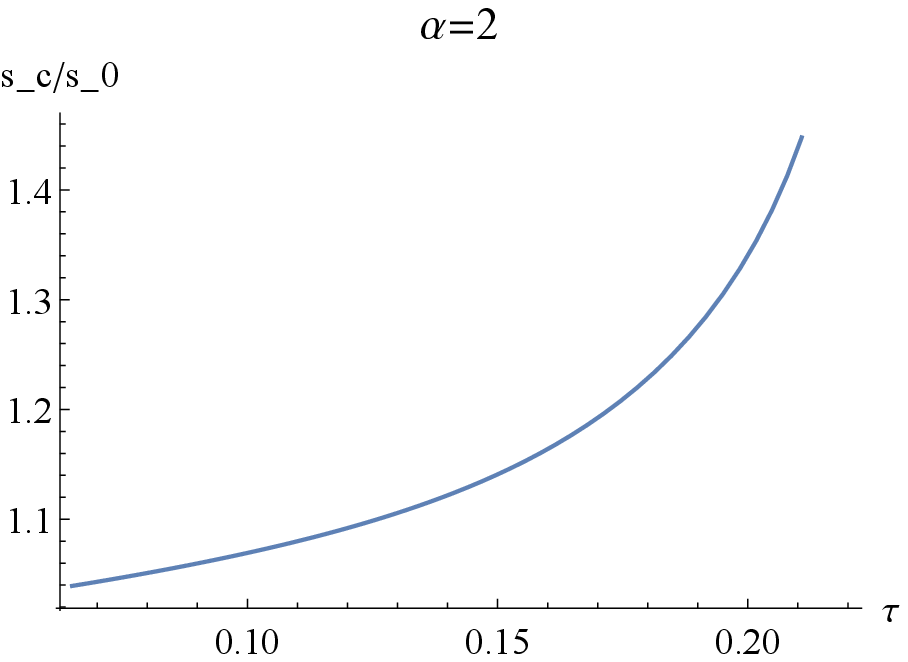}
\caption{$\alpha=2$}
\end{subfigure}
\caption{Plots of $\tilde{h}$ as a function of $\tau$, for representative values of $\alpha$.}
\end{figure}

We also include a contour plot showing a range of $(\tau,\alpha)$ space and the corresponding values of $\tilde{h}$. \begin{figure}\begin{center}
\includegraphics[width=0.7\textwidth]{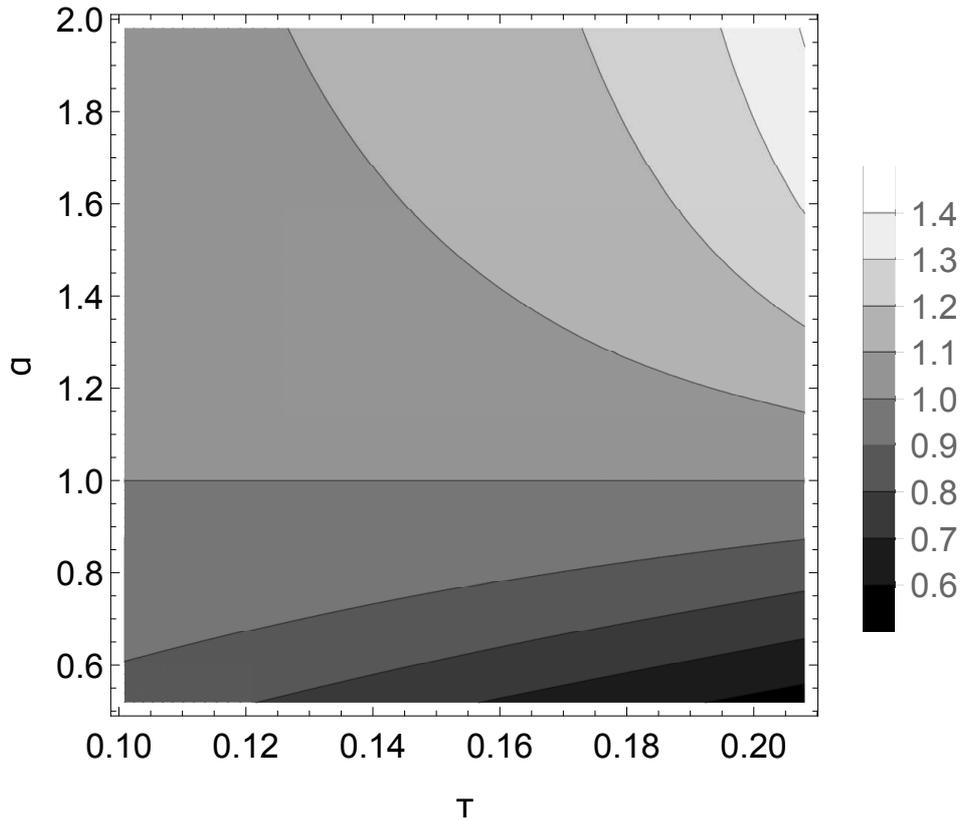}\end{center}\caption{A contour plot of the dimensionless HTP $\tilde{h}$ plotted against parameters $\alpha$,$\tau$.}\label{figHTP}\end{figure}

\section{Conclusions}
Within this work we have proposed a mean-field model for a spatially inhomogeneous two-species Host-Dopant mixture based on simple pairwise interactions. Both formally and in the language of $\Gamma$-convergence, we have analysed the asymptotics of this model in a large-domain and dilute-dopant limit, where the dopant number density scales as the reciprocal of domain size. This gives the Oseen-Frank free energy for cholesterics, where the Frank constants and ground state wavenumber admit explicit representations in terms of the molecular interactions and equilibrium order parameters of the two species. In particular, this gives an expression for the HTP, which may be increasing, decreasing or independent with respect to temperature depending on the relative strength of the Host-Host and Host-Dopant interactions. 

\section{Acknowledgments}
The author would like to express gratitude to Tianyi Guo, Peter Palffy-Muhoray, and Xiaoyu Zheng, for many insightful discussions in the making of this work. This research has been partially supported by the Basque Government through the BERC 2018-2021 program; and by Spanish Ministry of Economy and Competitiveness MINECO through BCAM Severo Ochoa excellence accreditation SEV-2017-0718 and through project MTM2017-82184-R funded by (AEI/FEDER, UE) and acronym ``DESFLU"

\bibliography{bibl}

\appendix

\section{Symmetries}\label{appSymmetry}

\subsection{General invariants for the antisymmetric interaction}
In this section we discuss the general form of an antisymmetric interaction kernel. We assume that $\tilde{\mathcal{K}}^c_{XY}(z,p,q)$ is linear in $\sigma(p)$ and $\sigma(q)$. By viewing $\tilde{\mathcal{K}}_{XY}^c(z,p,q)=A(z,\sigma(p))\cdot \sigma(q)$, where $A$ defines a traceless symmetric matrix, we can use existing results on invariants of hemitropic tensor valued functions to obtain a general form (see e.g. \cite[Section 4.4]{liu2013continuum}). Given a vector $z$, define $W_z$ to be the skew-symmetric matrix so that $W_zx=z\times x$ for all $x$. Then, we must have that for $W=W_z$, $Q=\sigma(p)$, as $A$ is hemitropic it must be of the form
\begin{equation}
f_1(z)(QW-WQ)+f_2(z)(WQW^2-W^2QW)+f_3(z)Q+ f_4(z)WQW+f_5(z,Q)z\otimes z.
\end{equation}
Now we consider the terms which are anti-symmetric, having odd symmetry in $z$. This leads us to conclude that $f_3,f_4,f_5$ must have odd symmetry in $z$, while $f_1,f_2$ must have even symmetry in $z$. As there are no isotropic functions of a single vector with odd symmetry apart from $0$, this implies $f_3=f_4=0$. By similar symmetry arguments, we must have that 
\begin{equation}
f_4(z,Q)=\tilde f_4(|z|,\text{tr}Q^2, \text{tr}Q^3,z\cdot Qz,z\cdot Q^2z, z\cdot Qz\times Q^2z).
\end{equation}
However, as $f_4$ is required to be linear in $Q$ and have odd symmetry in $z$, this must be of the form $g_4(z)z\cdot Qz$ for odd, isotropic $g_4$, therefore $g_4$ must be zero also and $f_4=0$. This leads us to the conclusion that the interaction must be of the form 
\begin{equation}
\tilde{\mathcal{K}}_{XY}^c(z,p,q)=f_1(z)(\sigma_1W-W\sigma_1)\cdot\sigma_2+f_2(z)(W\sigma_1W^2-W^2\sigma_1 W)\cdot \sigma_2
\end{equation}
where $\sigma_1=\sigma(p)$, $\sigma_2=\sigma(q)$. It is a tedious but straightforward exercise to verify that 
\begin{equation}
\begin{split}
\text{Tr}(\sigma_1W\sigma_2)=&(z\cdot p\times q)(p\cdot q),\\
\text{Tr}(W\sigma_1W^2\sigma_2)=&(z\cdot p\times q)(p\cdot z)(q\cdot z)-|z|^2(z\cdot p\times q)(p\cdot q).
\end{split}
\end{equation}

So, concluding, we claim that if we have an antisymmetric interaction $\tilde{\mathcal{K}}_{XY}^c(z,p,q)$ which is linear in $\sigma(p),\sigma(q)$, then it must be of the form 
\begin{equation}
\tilde{\mathcal{K}}_{XY}^c(z,p,q)=F_1(|z|)(z\cdot p\times q)(p\cdot q)+F_2(|z|)(z\cdot p \times q)(z\cdot p)(z\cdot q).
\end{equation}

At other points it may be more useful to consider a full tensorial form. If $Q^1,Q^2$ are symmetric, traceless tensors, then we can write 
\begin{equation}\begin{split}
\text{Tr}(Q^1WQ^2)=&Q^1_{ij}\epsilon_{jkl}z_kQ^2_{li},\\
\text{Tr}(WQ^1W^2Q^2)=&|z|^2\epsilon_{ijk}z_jQ^1_{kl}(\hat z_l\hat z_m-\delta_{lm})Q^2_{mi}\end{split}
\end{equation}

\end{document}